\newcommand{\set}[1]{\left\{#1\right\}}
\newcommand{\ub}[1]{\underline{\textbf{#1}}}
\newcommand{\ur}[1]{\underline{\text{#1}}}
\newcommand{\rb}[1]{{\textbf{#1}}}
\newcommand{\infinity}{{${\infty}$}}
\newcommand{\myitemize}[2]{\begin{itemize}[leftmargin=#1,nosep]#2\end{itemize}
\vspace{1mm}}
\newtheorem{theorem}{Theorem}[section]
\newtheorem{lemma}{Lemma}[section]
\newtheorem{proposition}{Proposition}[section]
\newtheorem{observation}{Observation}[section]
\newtheorem{conjecture}{Conjecture}[section]
\newtheorem{example}{Example}[section]
\newcommand{\AC}{Auto\-Case}
\newcommand{\SymPy}{\texttt{SymPy}}
\newcommand{\ZT}{\texttt{Z3}}
\begin{document}
\copyrightyear{2025}
\copyrightclause{Copyright for this paper by its authors.
  Use permitted under Creative Commons License Attribution 4.0
  International (CC BY 4.0).}

\conference{10th International Workshop on Satisfiability Checking and Symbolic Computation, August 2, 2025, Stuttgart, Germany}

\title{Symbolic Sets for Proving Bounds on Rado Numbers}

\author[1]{Tanbir Ahmed}[
  email=tanbir@gmail.com,
  url=https://tanbir.github.io/
]
\author[1]{Lamina Zaman}[
  email=zamanl@uwindsor.ca
]
\author[1]{Curtis Bright}[
  orcid=0000-0002-0462-625X,
  email=cbright@uwindsor.ca,
  url=https://cs.curtisbright.com/
]

\address[1]{School of Computer Science, University of Windsor, Canada}

\begin{abstract}
  Given a linear equation $\cal E$ of the form $ax+by=cz$ where $a$, $b$, $c$ are positive integers, the $k$-colour \emph{Rado number} $R_k({\cal E})$ is the smallest positive integer $n$, if it exists, such that every $k$-colouring of the
  positive integers $\{1,2,\dotsc,n\}$ contains a monochromatic solution to ${\cal E}$.
  In this paper, we consider $k=3$ and the linear equations $ax+by=bz$ and $ax+ay=bz$.
  Using SAT solvers, we compute a number of previously unknown Rado numbers corresponding to these equations. 
  We prove new general bounds on Rado numbers inspired by the satisfying assignments discovered by the SAT solver.
  Our proofs require extensive case-based analyses that are difficult to check
  for correctness by hand, so we automate checking the correctness of our proofs
  via an approach which makes use of a new tool we developed
  with support for operations on symbolically-defined sets---%
  e.g., unions or
  intersections of sets of the form $\{f(1),f(2),\dotsc,f(a)\}$
  where $a$ is a \emph{symbolic} variable and $f$ is a function
  possibly dependent on $a$.
  No computer algebra system that we are aware of currently has
  sufficiently capable support for symbolic sets,
  leading us to develop
  a tool supporting symbolic sets using the Python symbolic computation
  library \SymPy\ coupled with the Satisfiability Modulo Theories solver \ZT.
  \end{abstract}

\begin{keywords}
Rado numbers,
Ramsey Theory on the Integers,
Satisfiability, SMT,
Symbolic Computation
\end{keywords}

\maketitle

\section{Introduction}
The study of Rado numbers \cite{rado1933} and colouring problems lies at the heart of Ramsey theory and additive combinatorics. 
In this paper, we present a number of new values and bounds on Rado numbers,
as well as an automated framework for proving lower bounds on Rado numbers using symbolic partitioning and satisfiability checking.

\subsection{Combinatorial context}
For integers $a$, $b$, let $[a,b]$ denote the set of integers $\set{x: a\leq x\leq b}$. 
For a positive integer $k\geq 2$, a system of linear equations is \emph{$k$-regular}
if there exists a positive integer $n$ such that for every $k$-colouring of 
$[1,n]$, there exists a monochromatic solution to the system.
A system is \emph{regular} if it is $k$-regular for every integer $k\geq 2$.
The following theorem by Rado \cite{rado1933} gives necessary and sufficient conditions for when a linear equation is regular.

\begin{theorem}[Rado \cite{rado1933}]\label{th-rado}
For nonzero integers $a_1$, $a_2$, $\dotsc$, $a_m$ and integer $c$, 
let $s=\sum_{i=1}^ma_i$. Then:
\myitemize{11pt}{
\item The equation $\sum_{i=1}^ma_ix_i=0$ is regular if and only if there 
exists a nonempty set $D\subseteq [1,m]$ such that $\sum_{d\in D}a_d=0$.
\item The equation $\sum_{i=1}^ma_ix_i=c$ is regular if and only if one 
of the following two conditions holds:
$(a)$ $c/s\in {\mathbb Z}^+$;
$(b)$ $c/s\in {\mathbb Z}^-$ and $\sum_{i=1}^ma_ix_i=0$ is regular.
}
\end{theorem}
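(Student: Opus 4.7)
The plan is to split the argument into the homogeneous equation $\sum_{i=1}^m a_i x_i = 0$ and the inhomogeneous equation $\sum_{i=1}^m a_i x_i = c$, proving necessity and sufficiency for each. For the inhomogeneous half, I would first establish a scaling/translation reduction that links it to the homogeneous case, so that only one ``hard'' set of ideas has to be developed.

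For sufficiency in the homogeneous case, given a subset $D \subseteq [1,m]$ with $\sum_{d \in D} a_d = 0$, I would appeal to van der Waerden's theorem: for any $k$-colouring of $[1,n]$ with $n$ sufficiently large (as a function of $m$ and $k$), there is a monochromatic arithmetic progression $\{a, a+d, a+2d, \dotsc, a+Ld\}$ of a prescribed length. I would then select $x_i = a + t_i d$ with integer parameters $t_i$ so that the equation $\sum a_i x_i = 0$ reduces to $a(\sum_i a_i) + d(\sum_i a_i t_i) = 0$; picking the $t_i$ appropriately on $D$ and on its complement, and using $\sum_{d\in D} a_d = 0$ to cancel the $a$-term, exhibits the desired monochromatic solution. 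For sufficiency of the inhomogeneous equation, if $c/s \in \mathbb{Z}^+$ then $x_1 = \dotsb = x_m = c/s$ is a constant (hence monochromatic) solution, while if $c/s \in \mathbb{Z}^-$ and the homogeneous version is regular, one translates a homogeneous monochromatic solution $(y_1,\dotsc,y_m)$ by $-c/s$ so that $x_i = y_i - c/s$ solves $\sum a_i x_i = c$ and lies in $\mathbb{Z}^+$ whenever the $y_i$ are large enough (which van der Waerden again guarantees).

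For necessity in the homogeneous case, I would construct an explicit colouring with no monochromatic solution whenever no subset of $\{a_1,\dotsc,a_m\}$ sums to zero. The standard approach is to pick a large prime $p$ (chosen so that no nonzero subset-sum vanishes modulo $p$) and to colour each $n \in \mathbb{Z}^+$ by a function of the leading nonzero digit of $n$ in base $p$, or equivalently by the class of $n/p^{v_p(n)}$ modulo $p$; the subset-sum hypothesis forces any monochromatic solution to violate the base-$p$ expansion. For necessity in the inhomogeneous case when $c/s \notin \mathbb{Z}$ or $c/s \in \mathbb{Z}^-$ with the homogeneous equation non-regular, I would combine the same base-$p$ idea with a residue restriction modulo a prime chosen to rule out $c$, again producing a colouring of every $[1,n]$ with no monochromatic solution.

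The hard part will be the necessity direction: carefully choosing the prime $p$ and verifying that the base-$p$ digit colouring truly avoids monochromatic solutions under the hypothesis that no subset sums to zero. This requires a somewhat delicate case analysis comparing the $p$-adic valuations of the $x_i$'s, and in the inhomogeneous setting one must simultaneously handle the interaction of the subset-sum condition, the sign of $s$, and the divisibility relation between $c$ and $s$. The sufficiency direction is comparatively routine once van der Waerden's theorem is in hand, so the bulk of the work lies in engineering colourings for the forward implications.
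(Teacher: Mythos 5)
This is a classical theorem that the paper simply cites from Rado (1933) and does not prove, so there is no in-paper argument to compare against; judged on its own, your sketch follows the standard route (van der Waerden for sufficiency, the last-nonzero-digit base-$p$ colouring for necessity, reduction of the inhomogeneous case to the homogeneous one), but it has a concrete gap in the sufficiency direction. If you take \emph{all} variables from one monochromatic progression, $x_i=a+t_id$, the equation reduces to $a\,s+d\sum_i a_it_i=0$ with $s=\sum_{i=1}^m a_i$; the coefficient of $a$ is the \emph{full} sum $s$, which does not vanish in general, and no choice of the $t_i$ (which only multiply $d$) can cancel it, so the hypothesis $\sum_{d\in D}a_d=0$ is of no use in the form you describe. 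The standard fix is to set $x_i=a+t_id$ only for $i\in D$ and $x_i=\lambda d$ (a pure multiple of the common difference) for $i\notin D$, so that the $a$-coefficient becomes $\sum_{i\in D}a_i=0$; but then you need a monochromatic configuration $\{a,a+d,\dotsc,a+Ld\}\cup\{\lambda d\}$, i.e.\ a Brauer-type strengthening of van der Waerden (proved by iterating van der Waerden over the colours of the multiples of $d$), not van der Waerden alone. Plain vdW, as invoked in your sketch, is not enough.

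There is also a sign error with consequences in the inhomogeneous sufficiency: with $c/s\in\mathbb{Z}^-$, the substitution $x_i=y_i-c/s$ gives $\sum a_ix_i=-c$, not $c$; the correct shift is $x_i=y_i+c/s$, which moves the solution \emph{downward}, so positivity of the $x_i$ and the matching of colours become the actual difficulty (``van der Waerden guarantees the $y_i$ are large'' does not address this, since regularity only speaks about colourings of initial segments). The standard repair uses dilation invariance of the homogeneous equation: pull back the given colouring along the affine map $j\mapsto(t+1)j-t$ where $t=-c/s$, so a monochromatic homogeneous solution $(y_i)$ yields $x_i=(t+1)y_i-t\geq 1$ with $\sum a_ix_i=-ts=c$. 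Your necessity sketch is essentially right for the homogeneous equation and for $c/s\notin\mathbb{Z}$ (a residue colouring modulo a suitable prime power already works there), but the remaining necessity case ($c/s\in\mathbb{Z}^-$ with a non-regular homogeneous equation) is only gestured at and is where a genuine combination of the two colourings must be carried out.
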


For nonzero integers $a_1$, $a_2$, $\dotsc$, $a_m$ and integer~$c$, let the linear equation $\sum_{i=1}^{m-1}a_ix_i+c=a_mx_m$ be represented by
${\cal E}(m, c; a_1,a_2,\ldots,a_m)$.
For a linear equation ${\cal E}(m,c; a_1,a_2,\ldots,a_m)$ and a positive integer $k$, the $k$-colour
\emph{Rado number} $R_k({\cal E}(m,c; a_1,a_2,\ldots,a_m))$ is defined as the smallest positive integer
$n$, if it exists, from the definition of $k$-regularity demonstrating that the equation ${\cal E}(m,c; a_1,a_2,\ldots,a_m)$ is $k$-regular.  Otherwise, we say that
$R_k({\cal E}(m,c; a_1,a_2,\ldots,a_m))$ is infinite if there is a $k$-colouring of the positive integers with no monochromatic solution to ${\cal E}(m,c; a_1,a_2,\ldots,a_m)$.  
Surprisingly, not much is known about the properties of Rado numbers for $k=3$.
In 1995, Schaal~\cite{schaal1995} proved that the 3-colour Rado numbers $R_3({\cal E}(3,c;1,1,1))$ are always finite and $R_3({\cal E}(3,c;1,1,1)) = 13c+14$ for $c\geq 0$.
In 2015, Adhikari et al.~\cite{adhikari2015} proved exact formulas for \(R_3({\cal E}(4,c;1,1,1,1))\) and \(R_3({\cal E}(5,c;1,1,1,1,1))\) with \(c\geq 0\).
In 2022, Chang, De Loera, and Wesley~\cite{chang2022} proved
\myitemize{11pt}{
\item $R_3({\cal E}(3, 0; 1, -1, a-2)) = a^3-a^2-a-1$ for $a\geq 3$ 
(previously conjectured by Myers \cite{myers2015}),
\item $R_3({\cal E}(3, 0, a, -a, (a-1))) = a^3+(a-1)^2$ for $a\geq 3$, and
\item $R_3({\cal E}(3, 0;a, -a, b)) = a^3$ for $b\geq 1$, $a\geq b+2$, and $\gcd(a,b)=1$.
}
They also determined exact values of the Rado numbers
\myitemize{11pt}{
    \item \( R_3({\cal E}(3, 0;a, -a, b)) \) for \( 1 \leq a, b \leq 15 \),
    \item \( R_3({\cal E}(3, 0;a, a, b)) \) for \( 1 \leq a, b \leq 10 \),
    \item \( R_3({\cal E}(3, 0;a, b, c)) \) for \( 1 \leq a, b, c \leq 6 \),
}
as well as the following theorem.
\begin{theorem}\label{th:chang2022}
$R_3({\cal E}(3, 0; 1, 1, a)) = \infty$ for $a\geq 4$ and $R_3({\cal E}(3, 0; a, a, 1)) = \infty$ for $a\geq 2$.
\end{theorem}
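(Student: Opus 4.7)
My plan is to exhibit, for each claim, an explicit 3-coloring of $\mathbb{Z}^+$ with no monochromatic solution. The motivation is Theorem~\ref{th-rado}: no nonempty subset of $\{1,1,-a\}$ sums to zero when $a \ge 3$, and no nonempty subset of $\{a,a,-1\}$ sums to zero when $a \ge 2$, so both underlying equations are non-regular. My goal is to upgrade this qualitative non-regularity to $R_3 = \infty$ via a concrete geometric coloring.

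For $x + y = a z$ with $a \ge 4$, I would color $n \in \mathbb{Z}^+$ by $\lfloor \log_{a/2} n \rfloor \bmod 3$, partitioning the positive integers into geometric intervals $I_k = [(a/2)^k, (a/2)^{k+1})$ that cycle through three colors. Suppose for contradiction that $(x, y, z)$ is a monochromatic solution with $x \le y$, and let $k_y, k_z$ denote the interval indices of $y$ and $z$. Then $z = (x+y)/a \le 2y/a < (a/2)^{k_y}$ forces $k_z \le k_y - 1$, while $z \ge y/a \ge (a/2)^{k_y - 1}/2 \ge (a/2)^{k_y - 2}$---where the last inequality uses $a/2 \ge 2$---forces $k_z \ge k_y - 2$. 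Hence $k_y - k_z \in \{1, 2\}$, contradicting the required congruence $k_y \equiv k_z \pmod 3$.

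For $a x + a y = z$ with $a \ge 2$, the same strategy applies with base $\sqrt{2a}$ in place of $a/2$. Since $z = a(x+y)$ now sits above both $x$ and $y$, a symmetric analysis yields $k_z - k_y \in \{1, 2\}$, where the key inequality $\sqrt{2a} \ge 2$ holds precisely when $a \ge 2$; again this blocks $k_z \equiv k_y \pmod 3$.

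The main obstacle I anticipate is calibrating the logarithm base tightly enough that the difference in interval indices is pinned to $\{1, 2\}$ for every potential monochromatic triple: the base must be large enough that the shift is nonzero, yet small enough that it never reaches $3$. In the first claim, the hypothesis $a \ge 4$ is exactly what makes $a/2 \ge 2$, the threshold needed for the lower bound on $k_z$; the argument genuinely fails at $a = 3$, consistent with the theorem's exclusion of that case.
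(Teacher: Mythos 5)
Your construction is correct, but note that the paper does not actually prove this statement: Theorem~\ref{th:chang2022} is imported verbatim from Chang, De Loera, and Wesley~\cite{chang2022}, so there is no internal proof to compare against, and what you have written is a self-contained argument where the paper relies on a citation. Checking your coloring for $x+y=az$, $a\ge 4$: with base $a/2\ge 2$ and $x\le y$, the bounds $z\le 2y/a<(a/2)^{k_y}$ and $z\ge y/a=(a/2)^{k_y-1}/2\ge(a/2)^{k_y-2}$ do pin $k_y-k_z\in\{1,2\}$, which is incompatible with $k_y\equiv k_z\pmod 3$; the argument never needs the color of $x$, which is fine, and the degenerate case $k_y\le 1$ causes no trouble (for $k_y=0$ one gets $z<1$, so no solution exists at all). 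For $ax+ay=z$, $a\ge 2$, with $b=\sqrt{2a}$ the upper bound $z\le 2ay=b^2y<b^{k_y+3}$ is exact in the base (no inequality needed there), while the lower bound $z>ay=(b^2/2)y\ge b^{k_y+2}/2\ge b^{k_y+1}$ is exactly where $b\ge 2$, i.e.\ $a\ge 2$, enters, matching your identification of the key inequality; so $k_z-k_y\in\{1,2\}$ and again monochromaticity fails. One small caveat: Rado's Theorem~\ref{th-rado} only yields non-regularity (failure of $k$-regularity for \emph{some} $k$), so it is motivation rather than a shortcut to $R_3=\infty$, and you correctly treat it as such. In short, your geometric-interval 3-coloring gives an elementary, explicit proof of a result the paper merely quotes, at the modest cost of writing out the second case's bounds, which you only sketch but which complete routinely as above.
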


\subsection{Our contributions}
In this paper, we have investigated values and bounds of Rado numbers for the equations ${\cal E}(3,0; a,a,b)$ and ${\cal E}(3,0; a,b,b)$.
We have computed a number of previously unknown exact values using SAT solvers, extending the results presented by Chang, De Loera, and Wesley~\cite{chang2022}
to the values of
\myitemize{11pt}{
\item $R_3({\cal E}(3,0;a,b,b))$ for $1\leq b\leq 15$ and $1\leq a\leq 30$, and
\item $R_3({\cal E}(3,0;a,a,b))$ for $1\leq a\leq 15$ and $1\leq b\leq 25$.
}  
Note that we may assume that $a$ and $b$ are coprime, since if $a$ and $b$ share a common factor $g$
we may divide the coefficients of the equation ${\cal E}$ by $g$ without changing its solutions, e.g.,
$R_3({\cal E}(3,0;a,b,b))=R_3({\cal E}(3,0;a/g,b/g,b/g))$.
Guided by the satisfying assignments generated by the SAT solver in the process of computing these values, we proved the following
two theorems.
\begin{theorem}\label{th:lb-abb}
  For coprime positive integers $a$, $b$ with $a > b\geq 3$ and $a^2+a+b > b^2+ba$,
    \[
    {R_3}({\cal E}(3, 0; a, b, b)) \geq a^3 + a^2 + (2b+1)a + 1.
    \]
\end{theorem}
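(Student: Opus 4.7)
The plan is to construct an explicit 3-colouring of $\{1, 2, \ldots, N\}$ with $N = a^3 + a^2 + (2b+1)a$ that admits no monochromatic solution to $ax + by = bz$; this gives $R_3({\cal E}(3,0;a,b,b)) > N$ and hence the claimed bound. The first observation is structural: since $\gcd(a, b) = 1$, any solution $(x, y, z)$ of $ax + by = bz$ must have $x = bk$ and $z = y + ak$ for some positive integer $k$. A colour class $C$ is therefore free of monochromatic solutions if and only if, for every $bk \in C$ and every $y \in C$, the element $y + ak$ lies outside $C$ (treating $y + ak > N$ as trivially outside). Equivalently, $(C + ak) \cap C \cap \{1, \ldots, N\}$ must be empty for every $k$ with $bk \in C$.

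Next I would specify three colour classes $C_1$, $C_2$, $C_3$ partitioning $\{1, \ldots, N\}$ as explicit symbolic functions of $a$ and $b$, mirroring the shape of the satisfying assignment the SAT solver returns for small $(a, b)$. By analogy with the construction Chang, De Loera, and Wesley used to prove $R_3({\cal E}(3, 0; a, -a, b)) = a^3$, I would take $C_1$ to be a small initial segment such as $[1, a-1]$; this is automatically safe because any $bk \in C_1$ satisfies $y + ak \geq 1 + a > a - 1$, so no valid pair $(y, y + ak)$ fits inside $C_1$. The remaining classes $C_2$ and $C_3$ should be chosen to cover $[a, N]$ via intervals together with residue or arithmetic-progression conditions, so that each multiple of $b$ in a class is paired with a forbidden shift $ak$ that is too large, or too structurally misaligned, to land back in the same class.

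The heart of the proof is verifying, for each of $C_2$ and $C_3$ and every $k$ with $bk \in C_i$, that $(C_i + ak) \cap C_i \cap \{1, \ldots, N\} = \emptyset$. One enumerates these $k$-values parameterised symbolically and, for each, establishes the disjointness by comparing the intervals and residues defining $C_i$ against those defining $C_i + ak$. The hypotheses $a > b \geq 3$ and $a^2 + a + b > b^2 + ba$---the latter equivalent to $a(a-b+1) > b(b-1)$---would be invoked to control the boundary cases, ensuring that the upper end of each intermediate class cannot be reached from its lower end by any admissible shift $ak$. The target $N = a^3 + a^2 + (2b+1)a$ should emerge as the largest value at which all these inequalities are simultaneously satisfiable.

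The main obstacle is the proliferation of subcases: the number of admissible $k$-values scales with $a$ and $b$, and for each $k$ the disjointness must be argued symbolically rather than numerically, with parameters $a$, $b$, $k$, $y$ all free. This is precisely the kind of verification that the symbolic-set tool introduced in this paper, built on \SymPy\ and \ZT, is designed to discharge---manipulating symbolically parameterised sets such as $\{bk : k_0 \leq k \leq k_1\}$ and $C_i \cap (C_i - ak)$ rather than relying on a fragile hand-written case split.
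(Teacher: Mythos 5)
Your high-level plan coincides with the paper's: exhibit an explicit symbolic $3$-colouring of $[1,N]$ with $N=a^3+a^2+(2b+1)a$, use the fact that $\gcd(a,b)=1$ forces $x=bk$ and $z=y+ak$ in any solution of $ax+by=bz$, and discharge the resulting case analysis mechanically. That reduction is correct as far as it goes, but the proposal never produces the colouring, and the colouring \emph{is} the proof. The paper's construction is quite specific: $[1,N]$ is cut into five intervals $P_0=[1,ba]$, $P_1=[ba+1,b^2a]$, $P_2=[b^2a+1,ba^2+ba]$, $P_3=[ba^2+ba+1,a^3+a^2+(b+1)a]$, $P_4=[a^3+a^2+(b+1)a+1,N]$, and the colour classes are then carved out by divisibility filters modulo $b$ and $b^2$ (e.g.\ red is the non-multiples of $b$ in $P_0$, together with the multiples of $b^2$ in $P_0\cup P_1$, together with all of $P_4$; blue is the multiples of $b$ that are not multiples of $b^2$ in $P_0\cup P_1$ together with the multiples of $b$ in $P_2$; the third colour takes the non-multiples of $b$ in $P_1\cup P_2$ together with $P_3$). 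The hypothesis $a^2+a+b>b^2+ba$ enters at one precise point (showing that when $x,y$ are multiples of $b^2$ below $b^2a$, the resulting $z\le ba^2+b^2a$ stays below $\min(P_4)$), not as a generic boundary control. Saying that $C_2$ and $C_3$ ``should be chosen'' with suitable residue conditions and that $N$ ``should emerge'' from the inequalities leaves the entire substance of the theorem unestablished; neither you nor the tool can verify a colouring that has not been written down.

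Moreover, the one concrete choice you do make, devoting an entire colour class to the initial segment $C_1=[1,a-1]$, points away from any workable construction. It spends a colour on a set of size $O(a)$ and then asks for a $2$-colouring of the remaining set of size roughly $a^3$ with no monochromatic solution of $ax+by=bz$, a much stronger demand than the $3$-colouring being constructed; the paper's colouring instead interleaves all three colours throughout $[1,ba^2+ba]$ precisely because the divisibility classes mod $b$ and $b^2$ must be split among different colours to block the map $(bk,y)\mapsto y+ak$ within each class. So the gap is concrete: the argument is missing the explicit partition-plus-divisibility colouring (or any equally explicit substitute) and the accompanying case analysis, and the tentative construction sketched is unlikely to be completable.
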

\begin{theorem}\label{th:lb-aaap1}
    For odd integers $a \geq 7$,
    \[
    {R_3}({\cal E}(3, 0; a, a, a+1)) \geq a^3(a+1).
    \]
\end{theorem}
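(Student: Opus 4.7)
The plan is to prove the lower bound by exhibiting an explicit $3$-colouring of $[1,\, a^3(a+1)-1]$ that contains no monochromatic solution to $ax+ay=(a+1)z$. Since $\gcd(a,a+1)=1$, every solution must satisfy $(a+1)\mid x+y$ and $z=a(x+y)/(a+1)$; in particular, a diagonal solution $x=y$ requires $(a+1)/2\mid x$, which uses oddness of $a$. Hence, once a candidate colouring is fixed, the verification for each colour class $C$ reduces to: whenever $x,y\in C$ with $(a+1)\mid x+y$, the integer $a(x+y)/(a+1)$ either lies outside $C$ or exceeds $a^3(a+1)-1$.

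The first concrete step is to extract a candidate colouring. I would run the SAT solver for several small odd values $a\in\{7,9,11,\ldots\}$, inspect the satisfying assignments it returns for $[1,\, a^3(a+1)-1]$, and fit each colour class as a symbolically-parameterized union of intervals whose endpoints are polynomials in $a$. The fact that $a^3(a+1)=a^4+a^3$ suggests that the natural breakpoints will involve the powers $a$, $a^2$, $a^3$, $a^4$, together with small shifts depending linearly on $a$, and possibly divisibility-flavoured refinements (e.g.\ separating residues modulo $a+1$) to match the constraint $(a+1)\mid x+y$.

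The second step is the proof itself: for each colour class $C=\bigcup_i I_i$ and for every ordered pair of pieces $(I_i, I_j)$ inside $C$, symbolically compute (or over-approximate) the image set $\{a(x+y)/(a+1) : x\in I_i,\, y\in I_j,\, (a+1)\mid x+y\}$ and verify its disjointness from every $I_k\subseteq C$. This is exactly the kind of manipulation the symbolic-set tool introduced earlier in the paper is designed to automate, and I would lean on it heavily rather than attempt the arithmetic by hand. The main obstacle is twofold: first, discovering a pattern that actually generalizes cleanly across all odd $a\geq 7$ rather than one that only fits a handful of small cases; and second, managing the quadratic blow-up in the number of $(I_i,I_j)$ pairs while keeping each individual case a tractable polynomial inequality in $a$. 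The threshold $a\geq 7$ (and the parity requirement) is expected to appear precisely because for smaller or even $a$ some of the symbolic intervals collapse, overlap, or cross $a^3(a+1)-1$, destroying the structure on which the verification depends.
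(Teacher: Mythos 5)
Your high-level strategy---derive a colouring from SAT certificates and verify symbolically that no colour class contains $x,y$ with $(a+1)\mid x+y$ and $z=a(x+y)/(a+1)$ in the same class---is indeed the paper's methodology, and the reduction via $\gcd(a,a+1)=1$ is correct. But as it stands the proposal has a genuine gap: it never exhibits the colouring, and the explicit construction is the entire mathematical content of the theorem. ``I would run the SAT solver and fit the pattern'' is a plan for finding a proof, not a proof; the lower bound only follows once a concrete partition of $[1,a^3(a+1)-1]$ is written down and verified for all odd $a\geq 7$.

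Moreover, the structural guess you make---each colour class as a union of intervals with polynomial endpoints, possibly refined by residues modulo $a+1$---does not match the structure that actually works, and this matters for whether your verification scheme could even be carried out. The good colouring is block-periodic with period $a(a+1)$: integers are written as $i\,a(a+1)+aj+k$, the non-multiples of $a$ are split into two colours by a pattern in $(j,k)$ repeated in every block, colour $0$ is exactly the multiples of $a$ that are \emph{not} multiples of $a^2$, and the multiples of $a^2$ are distributed between the remaining two colours in a block-dependent way (sets of the form $\bigcup_i\{ia^3+ja^2:\dots\}$, with index bounds involving $(a\pm1)/2$, which is where oddness of $a$ enters---not through diagonal solutions $x=y$). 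Consequently each colour class decomposes into a number of maximal intervals that grows with $a$ (on the order of $a^3$), so it cannot be captured by a bounded union of intervals with polynomial endpoints, and a pairwise check over interval pieces would not terminate in a fixed finite case analysis; one needs symbolically indexed format expressions with divisibility filters, as in the paper. Finally, the decisive lemma in the verification is an integrality argument based on divisibility of $x_1,x_2,x_3$ by $a$ and $a^2$ (which disposes of most of the $64$ case combinations per colour), rather than the congruence $x+y\equiv 0\pmod{a+1}$ that your outline centres on; without that divisibility-based decomposition the remaining cases (all three variables among the multiples of $a^2$, or two variables outside and the target inside) still require separate ad hoc arguments that your proposal does not anticipate.
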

Based on the computational data we generated and the above lower bounds, we propose the following conjectures.
  \begin{conjecture}\label{conj-abb}
    For coprime positive integers $a$, $b$ with $a > b\geq 3$ and $a^2+a+b > b^2+ba$,
    \[
    {R_3}({\cal E}(3, 0; a, b, b)) = a^3 + a^2 + (2b+1)a + 1.
    \]
    \end{conjecture}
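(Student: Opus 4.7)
Theorem~\ref{th:lb-abb} already supplies the lower bound, so the plan is to establish the matching upper bound: every $3$-colouring of $[1, N]$ with $N = a^3 + a^2 + (2b+1)a + 1$ contains a monochromatic solution to $ax + by = bz$. Because $\gcd(a,b) = 1$ and the equation gives $ax = b(z-y)$, every solution has the form $(x,y,z) = (bt,\, y,\, at+y)$ for positive integers $t, y$. Thus I need to show that for every $\chi : [1,N] \to \{1,2,3\}$ there exist $t, y \geq 1$ with $bt,\, y,\, at+y \in [1,N]$ and $\chi(bt) = \chi(y) = \chi(at+y)$.

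I would proceed by contradiction: fix a hypothetical $\chi$ on $[1,N]$ that admits no monochromatic solution. The natural strategy is to use the extremal colourings constructed for Theorem~\ref{th:lb-abb} on $[1, N-1]$ as a template and to argue that extending the colouring by the single element $N$ must break one of its structural properties. Concretely, I would fix a short list of anchor numbers---small multiples of $b$ such as $b, 2b, \dotsc$, the powers $a, a^2$, and elements near the upper boundary $N$---and perform a case analysis on how $\chi$ colours them. In each branch the implication ``$\chi(y) = \chi(bt) = c$ forces $\chi(at+y) \neq c$'' propagates colour constraints across long arithmetic progressions of the form $\{at + y : t \in [t_0,t_1]\}$, and these propagated families either collide with previously forced colours or directly furnish a monochromatic triple. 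The symbolic arithmetic on such parametric intervals is exactly what the \SymPy/\ZT\ framework of this paper is designed to automate, with \ZT\ discharging the ambient inequalities $a > b \geq 3$ and $a^2 + a + b > b^2 + ba$ as needed.

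The main obstacle is the case explosion: unlike the lower bound, where constructing a single good colouring suffices, the upper bound must eliminate \emph{every} solution-free colouring, and branching on permutations of the colour labels together with symbolic $a, b$ can produce a deep case tree in which each leaf demands an intricate symbolic set computation. A secondary challenge is pinning down the precise role of the side hypothesis $a^2 + a + b > b^2 + ba$: for equality to hold at exactly $N$, this inequality must be exactly what forces a specific triple $(bt, y, at+y)$ to fit inside $[1, N]$, and lining up the case analysis so that the inequality is invoked at the right step is delicate. Finally, the extremal structure observed in the SAT data may depend on residues of $a$ and $b$ modulo small integers, so the argument likely splits into several parameter families that the automated checker must close separately.
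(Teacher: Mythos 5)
The statement you are trying to prove is Conjecture~\ref{conj-abb}, which the paper explicitly leaves \emph{open}: the paper proves only the lower bound (Theorem~\ref{th:lb-abb}, via an explicit symbolic colouring verified by \AC) and supports the matching upper bound solely by the finite SAT data in Table~\ref{tab:abb}. Your proposal correctly observes that the lower bound is already Theorem~\ref{th:lb-abb} and correctly parametrizes all solutions of $ax+by=bz$ as $(bt,\,y,\,at+y)$ using $\gcd(a,b)=1$. But everything after that is a research plan, not a proof: you never execute the case analysis on your ``anchor numbers,'' never derive a single forced colour, and never exhibit the contradiction. You acknowledge this yourself when you list the ``case explosion'' and the unclear role of $a^2+a+b>b^2+ba$ as obstacles. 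A proof sketch whose central step is ``the propagated families either collide with previously forced colours or directly furnish a monochromatic triple'' asserts exactly the statement to be proved.

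There is also a structural mismatch with the paper's tooling that makes the plan unworkable as stated. \AC\ and the \SymPy/\ZT\ pipeline verify that \emph{one given} symbolically defined colouring admits no monochromatic solution --- an existential certificate, which is why it suffices for lower bounds. The upper bound is a universal statement over \emph{all} $3$-colourings of $[1,N]$ with $a$, $b$ symbolic; this quantifies over a family of colouring functions of symbolically unbounded domain size, which is not a finite case split that \ZT\ can discharge, and the paper's SAT encoding handles it only for fixed numeric $(a,b)$, one instance at a time. Closing the conjecture would require a genuinely new combinatorial induction (in the style of the upper-bound arguments of Chang, De Loera, and Wesley for ${\cal E}(3,0;1,-1,a-2)$), not an invocation of the existing framework. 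Your idea of rigidifying the extremal colouring of Theorem~\ref{th:lb-abb} is a plausible starting point for such an argument, but as written the key steps are missing, so the proposal does not prove the statement --- which is consistent with the paper, where it remains a conjecture.
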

  \begin{conjecture}\label{conj-aab-odd}
      For coprime positive integers $a$, $b$, with $a$ odd, if\/ $3\leq a<b\leq 2a-1$ then
      \[
      R_3({\cal E}(3, 0; a, a, b)) = a^3b.
      \]
  \end{conjecture}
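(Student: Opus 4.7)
The plan is to establish the conjecture in two parts: a lower bound $R_3({\cal E}(3,0;a,a,b)) \geq a^3 b$ obtained from an explicit 3-colouring of $[1, a^3 b - 1]$, and a matching upper bound showing every 3-colouring of $[1, a^3 b]$ contains a monochromatic solution. Since $\gcd(a,b) = 1$, the equation $ax + ay = bz$ forces $a \mid z$, so writing $z = am$ reduces the problem to avoiding (for the lower bound) or producing (for the upper bound) a monochromatic triple $(x, y, am)$ with $x + y = bm$.

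For the lower bound I would attempt to extend the colouring used in the proof of Theorem~\ref{th:lb-aaap1} from the case $b = a+1$ to the full range $a < b \leq 2a - 1$. I expect the three colour classes to be unions of intervals whose endpoints are low-degree polynomials in $a$ and $b$, with natural break points at $a$, $b$, $ab$, $a^2$, $a^2 b$, $a^3$, and $a^3 b - 1$. After parametrizing a candidate colouring, I would use the symbolic-set tool developed in this paper to verify that within each class $C$ the sumset $\set{x+y : x,y\in C}$ avoids the set $b\cdot\set{m : am \in C}$, invoking the hypothesis $b \leq 2a - 1$ at the (few) places where specific sumset comparisons depend on it.

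The upper bound is the substantially harder direction. The strategy is a branching case analysis rooted at the colours of the scale elements $1$, $a$, $a^2$, $a^3$ together with their $b$-multiples $b$, $ab$, $a^2 b$, and $a^3 b$; within each branch one exhibits a short witnessing triple $x + y = bm$ with $x, y, am$ all in the current colour class, and chains such witnesses along the powers-of-$a$ scaffolding until a contradiction appears. The principal obstacle is that the equation $ax + ay = bz$ is considerably less rigid than $ax - ay = bz$, for which Chang, De~Loera, and Wesley obtain $R_3 = a^3$ by a clean doubling argument; here the branches proliferate, and the coprimality and the inequality $a < b \leq 2a - 1$ must be engaged at delicate points (and shown to be genuinely necessary by inspecting the boundary cases $b = a$, $b = 2a$, and $b > 2a-1$ with a SAT solver). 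I expect the symbolic-set framework to be essential both for machine-checking each branch and for helping discover the right witnessing triples: each branch is naturally phrased as a constraint on symbolically-defined sets parametrized by $a$ and $b$, which the SMT backend can then discharge.
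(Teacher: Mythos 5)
This statement is a \emph{conjecture} in the paper, not a theorem: the paper supports it only by the SAT-computed values in Table~\ref{tab:aab} and proves a lower bound solely for the single family $b=a+1$ (Theorem~\ref{th:lb-aaap1}); no upper bound is established for any infinite family. Your submission, likewise, is a research plan rather than a proof, and as such it has genuine gaps in both directions. For the lower bound you only say you ``would attempt to extend'' the $b=a+1$ colouring; no colouring is actually exhibited for general $a<b\leq 2a-1$, and your guess that the colour classes are unions of intervals with polynomial endpoints conflicts with what the paper's construction actually looks like: in the proof of Theorem~\ref{th:lb-aaap1} the classes are \emph{not} intervals but are cut by divisibility conditions --- the multiples of $a$ that are not multiples of $a^2$ form their own colour, the non-multiples of $a$ are split by a residue-dependent rule inside blocks of length $a(a+1)$, and the multiples of $a^2$ receive a delicate block-indexed red/blue split ($R_r$, $B_r$). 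Any extension to general $b$ would have to specify how this $S_{a^2}$-level structure depends on $b$, which your plan does not address, and only then could the symbolic-set verification you invoke be run.

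The upper bound is where the proposal is weakest: beyond the (immediate) observation that $\gcd(a,b)=1$ forces $a\mid z$, you give no actual argument --- no branch of the proposed case analysis is carried out, no witnessing triple is exhibited, and there is no evidence that the scaffolding $1,a,a^2,a^3,b,ab,a^2b,a^3b$ suffices or that the branching terminates. Note also that a symbolic/SMT verification of the kind used by \AC\ can certify the \emph{absence} of monochromatic solutions in a given colouring (a lower bound), but an upper bound quantifies over \emph{all} $3$-colourings of $[1,a^3b]$ with $a,b$ symbolic, which is a different and much harder kind of statement that the paper's framework does not handle; the known matching upper bounds in the literature (e.g., Chang, De~Loera, and Wesley for $ax-ay=bz$) required bespoke combinatorial arguments. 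So the proposal should be read as a reasonable outline of how one might attack Conjecture~\ref{conj-aab-odd}, but it does not constitute a proof, and the conjecture remains open.
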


The proofs of Theorems \ref{th:lb-abb} and \ref{th:lb-aaap1} are automatically verified using a symbolic analyzer we developed in Python called 
\AC.\footnote{\href{https://github.com/laminazaman/RadoNumbers/tree/main/AutoCase}{github.com/laminazaman/RadoNumbers/tree/main/AutoCase}}
The motivation has been to avoid errors in the intricate case-based analysis which is often routine and repetitive.
For example, in order to verify Theorem~\ref{th:lb-aaap1}, AutoCase takes as input
three symbolically-defined subsets $R$, $G$, $B$ of the integers $[1,a^3(a+1)-1]$ (where $a\geq7$ is a symbolic odd integer).
\AC\ verifies that the union of $R$, $G$, and $B$ is the set $[1,a^3(a+1)-1]$ and that $R$, $G$, $B$ are mutually disjoint---thereby
confirming that $R$, $G$, $B$ form a 3-colouring of the positive integers less than $a^3(a+1)$.  Finally, \AC\ confirms
that there are no monochromatic solutions of the equation $ax+ay=(a+1)z$, i.e., the set $\bigcup_{ax+ay=(a+1)z}\{(x,y,z)\}$
is disjoint with $R^3$, $G^3$, and $B^3$.
We stress that the sets defining the colouring are defined \emph{symbolically}, e.g., $\bigcup_{i,j,k}\{f(i,j,k)\}$
where $f$ is a function (possibly depending on~$a$), the upper and lower bounds on the indices $i$, $j$, $k$
may depend on $a$, and the elements of the set may be filtered by divisibility predicates such as specifying that
all elements are divisible by $a$.

Although it would be easy to perform the necessary
set operations in a typical computer algebra system like Maple, Mathematica, and SageMath
if $a$ was a known fixed integer, in our application $a$ is a symbolic variable,
and we are aware of no computer algebra system supporting
the operations on symbolic sets we require in this paper.  
Maple and Mathematica seem to lack the ability to even express the set $[1,a]$ when $a$ is symbolic.
SageMath does have support for a ``ConditionSet'' allowing sets like $[1,a]$ to be formulated, but
the operations supported by ConditionSets was too limited for our purposes.
In \AC, operations on symbolic sets are performed by employing a combination of the Python symbolic computation
library \SymPy~\cite{sympy} and the SMT (SAT modulo theories) constraint solver \ZT~\cite{deMoura2008}.

Our work therefore fits into the ``SC-Square'' paradigm of combining satisfiability checking
with symbolic computation~\cite{abraham2016,Bright2022}.  Over the past decade, the SC-Square
community has combined the tools of satisfiability checking (e.g., SAT and SMT solvers)
with the tools of symbolic computation (e.g., computer algebra systems and libraries) in order
to make progress on problems benefiting from both the search and learning of SAT/SMT solvers
and the mathematical sophistication of computer algebra.  As just a single example,
computer algebra libraries are able to detect if two mathematical objects are isomorphic.
Augmenting a SAT solver with isomorphism detection can dramatically improve
its efficiency on mathematical problems like proving the nonexistence of an order ten projective
plane~\cite{Bright2021} or enumerating various kinds of graphs up to isomorphism,
something that can be done by SAT modulo symmetries~\cite{Kirchweger2023} or SAT+CAS~\cite{Li2024} solvers.

\section{Computational Results}\label{sec:computational-results}

One of the key challenges in Ramsey theory on the integers is the scarcity of data, as generating individual data points is an exceptionally difficult task. However, modern computational tools, including SAT solvers, have alleviated some of these computational challenges. During the last two decades, SAT solvers have been employed to compute Ramsey-type numbers at different scales and capacities. Some examples are the computation of the values and bounds of van der Waerden numbers (Kouril and Paul~\cite{kouril2006}, Herwig et al.~\cite{herwig2007}, Ahmed~\cite{ahmed2009, ahmed2010, ahmed2011, ahmed2013}, Ahmed et al.~\cite{aks2014}) and Schur numbers (Ahmed and Schaal~\cite{as2015}, Ahmed et al.~\cite{abrs2023a}).  A notable success of SAT solvers in this area of mathematics is the computation of the fifth Schur number by Heule~\cite{heule2018}. 

Recently, Chang, De Loera, and Wesley~\cite{chang2022} applied SAT solvers to compute Rado numbers. Building on this progress, we used SAT solvers to determine new exact values of Rado numbers and to uncover patterns in the colourings that avoid monochromatic solutions to the linear equation under investigation. These patterns
provide general lower bounds on Rado numbers and we employed methods from symbolic computation and satisfiability checking to automate the verification of these lower bounds.

\subsection{The satisfiability (SAT) problem}

A {\em literal} is a Boolean variable (say $x$) or its negation (denoted $\bar x$). A {\em clause} is a logical disjunction of literals. A {\em formula} is in {\em Conjunctive Normal Form (CNF)} if it is a logical conjunction of clauses.

A {\em truth assignment} is a mapping of each variable in its domain to true or false.
A truth assignment {\em satisfies a clause} if it maps at least one of its literals to true and the assignment {\em satisfies a formula} if it satisfies each of its clauses.
A formula is called {\em satisfiable} if it is satisfied by at least one truth assignment and otherwise it is called {\em unsatisfiable}.
The problem of recognizing satisfiable formulas is known as {\em the satisfiability problem}, or SAT for short.

\subsection{Encoding Rado numbers as SAT problems}

Given positive integers $n$ and $k$ and a linear equation $\cal E$, we now construct a formula in conjunctive normal form that is 
satisfiable if and only if there exists a $k$-colouring of $[1,n]$ avoiding monochromatic solutions of ${\cal E}$.
Therefore, if our formula is satisfiable then  
$R_k({\cal E})>n$ and if our formula is unsatisfiable then $R_k({\cal E})\leq n$.
\paragraph{Variables:} Variables are denoted by $v_{i,j}$ for $0 \leq i \leq k-1$ and $1\leq j\leq n$, such that $v_{i,j}$ is true if and only if colour $i$ is assigned to integer $j$. There are $nk$ variables in the formula. 
\paragraph{At least one colour is assigned to every integer:} For each integer $j\in [1,n]$, the clause \[(v_{0,j}\vee v_{1,j}\vee\dotsb\vee v_{k-1,j})\] ensures at least one colour $i\in [0,k-1]$ is assigned to $j$. 
\paragraph{At most one colour is assigned to every integer:} For each integer $j\in [1,n]$, the clauses \[\bigwedge_{0\leq i_1 < i_2\leq k-1}(\bar{v}_{i_1,j} \vee \bar{v}_{i_2,j})\] ensure at most one colour $i\in [0,k-1]$ is assigned to $j$.
\paragraph{There is no monochromatic solution to ${\cal E}$:} For each colour $i\in [0,k-1]$ and for each solution $(x_1,x_2,\ldots,x_m)$ to ${\cal E}$, the clause
\[(\bar{v}_{i,x_1} \vee \bar{v}_{i,x_2} \vee \cdots \vee \bar{v}_{i,x_m})\] ensures
the solution $(x_1,x_2,\dotsc,x_m)$ is not monochromatic in colour~$i$. Let ${\cal S}_{{\cal E}, n}$ denote the set of all solutions $(x_1,x_2,\ldots,x_m)$ in $[1,n]$ that satisfy the equation ${\cal E}$. There will be $k\cdot |{\cal S}_{{\cal E}, n}|$ such clauses. 
We may compute $|\mathcal{S}_{\mathcal{E}, n}|$ by summing over all possible values of \( (x_1, x_2, \dots, x_{m-1}) \) in the range \( [1, n] \) via
\[
|\mathcal{S}_{\mathcal{E}, n}| = \sum_{x_1=1}^{n} \sum_{x_2=1}^{n} \dots \sum_{x_{m-1}=1}^{n} \left[\frac{\sum_{i=1}^{m-1} a_i x_i}{a_m} \in [1, n]\right] ,
\]
where summand uses Iverson bracket notation to ensure that the remaining variable \( x_m \) (determined by the equation) falls within \( [1, n] \).
This gives $|\mathcal{S}_{\mathcal{E}, n}|\leq n^{m-1}$, since \( [\cdot] \leq 1 \).
\paragraph{Breaking permutation symmetry of colours:}
So far, no distinction has been made amongst the~$k$ colours.  This artificially increases the size
of the search space by a factor of $k!$, the number of permutations on $k$ colours.
To avoid having the SAT solver explore all $3!$ colour permutations on $\{0,1,2\}$ during solving,
we ensure that colours are introduced in ascending order, i.e., colour~$0$ appears before colour~$1$
and colour~$1$ appears before colour~$2$.
To ensure that colour~$0$ appears in the first position, we add the unit clause $v_{0,1}$.
To ensure that colour~$1$ appears before colour~$2$, we
add the clause $(\bigvee_{i=1}^{j-1}\bar v_{0,i})\lor\bar v_{2,j}$
for $j=2$, $\dotsc$, $R_1(\cal E)$.  These say that
colour 2 cannot immediately follow an initial colouring
consisting only of colour 0s, and thus the first colour other than 0
must be 1.  One can work out the exact value of $R_1(\cal E)$ by hand for the cases we consider in this paper,
as in the following lemma whose proof we provide in appendix~\ref{sec:r1proof}.
\begin{lemma}\label{lem:r1value}
If\/ $a$ and\/ $b$ are positive coprime integers then\/ $R_1({\cal E}(3,0;a,b,b))=\max(a+1,b)$,
$R_1({\cal E}(3,0;a,a,1))=2a$, and\/ $R_1({\cal E}(3,0;a,a,b))=\max(a,\lceil b/2\rceil)$ when\/ $b>1$.
\end{lemma}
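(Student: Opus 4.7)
The plan is to use the elementary observation that $R_1({\cal E})$ is simply the smallest $n\geq 1$ for which ${\cal E}$ admits a solution in $[1,n]^3$, since under the unique $1$-colouring of $[1,n]$ every solution is automatically monochromatic. Hence for each of the three equations it suffices to determine the minimum of $\max(x,y,z)$ over valid positive-integer solutions and then exhibit a witness attaining this minimum.

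For ${\cal E}(3,0;a,b,b)$, i.e., $ax+by=bz$, I would rearrange to $ax=b(z-y)$ and invoke $\gcd(a,b)=1$ to conclude $b\mid x$, forcing $x\geq b$. Writing $x=bk$ yields $z=y+ak$, and minimizing $\max(bk,y,y+ak)$ over $k\geq 1$, $y\geq 1$ gives $k=1$, $y=1$ and the witness $(b,1,a+1)$ with maximum coordinate $\max(a+1,b)$; any $k\geq 2$ strictly increases the maximum, so the bound is tight.

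For ${\cal E}(3,0;a,a,1)$, i.e., $ax+ay=z$, every solution has $z=a(x+y)\geq 2a$, with equality at $(1,1,2a)$. For ${\cal E}(3,0;a,a,b)$ with $b>1$ and $\gcd(a,b)=1$, rewriting as $a(x+y)=bz$ and using coprimality gives $a\mid z$ and $b\mid (x+y)$; setting $z=ak$ forces $x+y=bk$, so $\max(x,y,z)\geq\max(ak,\lceil bk/2\rceil)$. This is minimized at $k=1$ with value $\max(a,\lceil b/2\rceil)$, and is attained by the witness $(\lfloor b/2\rfloor,\lceil b/2\rceil,a)$; both first entries are at least $1$ since $b\geq 2$.

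The main obstacle is not conceptual but bookkeeping: one must certify in each case that increasing the divisibility multiplier $k$ only worsens the maximum, and that the witness respects the constraint that all coordinates lie in $[1,n]$ at the claimed $n$. I would handle the subcases $a\geq\lceil b/2\rceil$ versus $a<\lceil b/2\rceil$ in the third equation separately to keep the monotonicity step transparent, and double-check the corner cases $b=1$ in the first equation and $b=2$ in the third, where some coordinates coincide with the attained bound.
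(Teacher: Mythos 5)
Your proposal is correct and follows essentially the same route as the paper's proof in Appendix~\ref{sec:r1proof}: reduce $R_1$ to minimizing $\max(x,y,z)$ over solutions, use coprimality to force $b\mid x$ (resp.\ $a\mid z$), and exhibit the witnesses $(b,1,a+1)$, $(1,1,2a)$, and $(\lfloor b/2\rfloor,\lceil b/2\rceil,a)$. Your explicit monotonicity-in-$k$ argument is just a slightly more careful version of the paper's observation that minimizing $x$ and $y$ also minimizes $z$.
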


\subsection{Efficient generation of integer solutions}
To efficiently generate all integer solutions $(x, y, z)$ within the interval \([1, n]\) for
$ax + by = cz$,
we iterate \( i \) from \( 1 \) to \( n \) and construct three linear Diophantine equations
\begin{align*}
a x &= cz - by \quad \text{where } x = i, \\
b y &= cz - ax \quad \text{where } y = i, \\
c z &= ax + by \quad \text{where } z = i.
\end{align*}
The goal is to independently find solutions for each equation.  

\myitemize{11pt}{
\item Step 1 (Solving a linear Diophantine equation).
To begin, we solve one of the above equations (say $ax+by=ci$) for unknowns $(x,y)$ using the Extended Euclidean Algorithm, providing a particular integer solution \((x_0, y_0)\).
The general solution is
\[
x = x_0 + \frac{b k}{\gcd(a, b)} \quad\text{and}\quad y = y_0 - \frac{a k}{\gcd(a, b)},
\]
where \( k \) is a parameter that runs over all integers to generate all solutions.

\item Step 2 (Restricting to the interval $[1,n]$).
Since we are only interested in solutions within \([1, n]\), we impose the constraints
\[
1 \leq x_0 + \frac{b k}{\gcd(a, b)} \leq n \;\text{ and }\; 1 \leq y_0 - \frac{a k}{\gcd(a, b)} \leq n .
\]
These inequalities provide upper and lower bounds on \( k \), ensuring that both \( x \) and \( y \) remain in the valid range. 

\item Step 3 (Iterating over valid values of \( k \)).
Once the feasible range for \( k \) is determined, we iterate over all valid values of \( k \) and 
solve for \( x \) and \( y \) at each step, adding (\( x \), \( y \), \( i \)) to our list of solutions.
\item Finally, we repeat these steps for the remaining two equations $a i = cz - by$ and $b i = cz - ax$.
}

\subsection{SAT solvers and computation resources}
For this work, we used the SAT solvers CaDiCaL~\cite{biere2024cadical} and Kissat~\cite{biere2024kissat}. Initially, we used CaDiCaL integrated with PySAT~\cite{imms-sat18} due to its incremental solving capabilities which enable the solver to reuse learned information across instances. Our approach starts with a counter \(n = 1\). For each \(n\), we add the corresponding clauses to the solver. If the instance is satisfiable, we increment \(n\) and continue incrementally. This process repeats until we encounter the first unsatisfiable instance, at which point the corresponding \(n\) is identified as the Rado number.

Once a Rado number was determined (or suspected to be known based on
the patterns we observed), we transitioned to using a non-incremental solver and generated two instances:
one for \(n - 1\), the last satisfiable instance, and one for \(n\), the first unsatisfiable instance.
We generated clauses in DIMACS format, wrote them to two CNF files, and used Kissat to solve both instances.
To be consistent in our timings, all instances were ultimately solved non-incrementally using Kissat.
Our largest instance, the unsatisfiable instance showing $R_3({\cal E}(3,0;15,15,8))=97875$,
contained 293,625 variables and 255,884,401 clauses and took 58.8 hours to solve.

Altogether, generating the SAT instances took around 11 hours,
the total CPU time used to solve the unsatisfiable instances (i.e., the instances that prove an upper bound) was around 251 hours,
and the total CPU time used on satisfiable instances (i.e., the instances that prove a lower bound) was around 53 hours.
Our computations were performed on the Compute Canada cluster \href{https://docs.alliancecan.ca/wiki/Cedar}{\texttt{Cedar}}, utilizing Intel E5-2683 v4 Broadwell processors running at 2.1 GHz.

\setlength{\extrarowheight}{1pt}

\begin{table*}\scriptsize
\begin{tabular}{|p{0.25cm}|p{0.55cm}|p{0.55cm}|p{0.55cm}|p{0.55cm}|p{0.55cm}|p{0.55cm}|p{0.55cm}|p{0.55cm}|p{0.55cm}|p{0.55cm}|p{0.55cm}|p{0.55cm}|p{0.55cm}|p{0.55cm}|p{0.55cm}|}
\hline
{$a$\textbackslash{$b$}} & 1 & 2 & 3 & 4 & 5 & 6 & 7 & 8 & 9 & 10 & 11 & 12 & 13 & 14 & 15 \\ \hline

\hline

1 & 14 & 14 & 27 & 64 & 125 & 216 & 343 & 512 & 729 & 1000 & 1331 & 1728 & 2197 & 2744 & 3375\\

2 & 43 & \ur{14} & 31 & \ur{14} & 125 & \ur{27} & 343 & \ur{64} & 729 & \ur{125} & 1331 & \ur{216} & 2197 & \ur{343} & 3375\\

3 & 94 & 61 & \ur{14} & 73 & 125 & \ur{14} & 343 & 512 & \ur{27} & 1000 & 1331 & \ur{64} & 2197 & 2744 & \ur{125}\\

4 & 173 & \ur{43} & 109 & \ur{14} & 141 & \ur{31} & 343 & \ur{14} & 729 & \ur{125} & 1331 & \ur{27} & 2197 & \ur{343} & 3375\\

5 & 286 & 181 & 186 & 180 & \ur{14} & 241 & 343 & 512 & 729 & \ur{14} & 1331 & 1728 & 2197 & 2744 & \ur{27}\\

6 & 439 & \ur{94} & \ur{43} & \ur{61} & 300 & \ur{14} & 379 & \ur{73} & \ur{31} & \ur{125} & 1331 & \ur{14} & 2197 & \ur{343} & \ur{125}\\

7 & 638 & 428 & 442 & 456 & 470 & 462 & \ur{14} & 561 & 729 & 1000 & 1331 & 1728 & 2197 & \ur{14} & 3375\\

8 & 889 & \ur{173} & 633 & \ur{43} & 665 & \ur{109} & 644 & \ur{14} & 793 & \ur{141} & 1331 & \ur{31} & 2197 & \ur{343} & 3375\\

9 & 1198 & 856 & \ur{94} & 892 & 910 & \ur{61} & 896 & 896 & \ur{14} & 1081 & 1331 & \ur{73} & 2197 & 2744 & \ur{125}\\

10 & 1571 & \ur{286} & 1171 & \ur{181} & \ur{43} & \ur{186} & 1190 & \ur{180} & 1206 & \ur{14} & 1431 & \ur{241} & 2197 & \ur{343} & \ur{31}\\

11 & 2014 & 1508 & 1530 & 1552 & 1574 & 1596 & 1618 & 1584 & 1575 & 1580 & \ur{14} & 1849 & 2197 & 2744 & 3375\\

12 & 2533 & \ur{439} & \ur{173} & \ur{94} & 2005 & \ur{43} & 2053 & \ur{61} & \ur{109} & \ur{300} & 2024 & \ur{14} & 2341 & \ur{379} & \ur{141}\\

13 & 3134 & 2432 & 2458 & 2484 & 2510 & 2536 & 2562 & 2588 & 2574 & 2530 & 2541 & 2544 & \ur{14} & 2913 & 3375\\

14 & 3823 & \ur{638} & 3039 & \ur{428} & 3095 & \ur{442} & \ur{43} & \ur{456} & 3207 & \ur{470} & 3113 & \ur{462} & 3146 & \ur{14} & 3571\\

15 & 4606 & 3676 & \ur{286} & 3736 & \ur{94} & \ur{181} & 3826 & 3856 & \ur{186} & \ur{61} & 3795 & \ur{180} & 3835 & 3836 & \ur{14}\\

16 & 5489 & \ur{889} & \rb{4465} & \ur{173} & \rb{4529} & \ur{633} & \rb{4593} & \ur{43} & \rb{4657} & \ur{665} & \rb{4576} & \ur{109} & \rb{4602} & \ur{644} & \rb{4620}\\

17 & 6478 & \rb{5288} & \rb{5322} & \rb{5356} & \rb{5390} & \rb{5424} & \rb{5458} & \rb{5492} & \rb{5526} & \rb{5560} & \rb{5594} & \rb{5424} & \rb{5447} & \rb{5474} & \rb{5475}\\

18 & 7579 & \ur{1198} & \ur{439} & \ur{856} & \rb{6355} & \ur{94} & \rb{6427} & \ur{892} & \ur{43} & \ur{910} & \rb{6571} & \ur{61} & \rb{6409} & \ur{896} & \ur{300}\\

19 & 8798 & \rb{7316} & \rb{7354} & \rb{7392} & \rb{7430} & \rb{7468} & \rb{7506} & \rb{7544} & \rb{7582} & \rb{7620} & \rb{7658} & \rb{7696} & \rb{7488} & \rb{7518} & \rb{7530}\\

20 & 10141 & \ur{1571} & \rb{8541} & \ur{286} & \ur{173} & \ur{1171} & \rb{8701} & \ur{181} & \rb{8781} & \ur{43} & \rb{8861} & \ur{186} & \rb{8941} & \ur{1190} & \ur{109}\\

21 & 11614 & \rb{9808} & \ur{638} & \rb{9892} & \rb{9934} & \ur{428} & \ur{94} & \rb{10060} & \ur{442} & \rb{10144} & \rb{10186} & \ur{456} & \rb{10270} & \ur{61} & \ur{470}\\

22 & 13223 & \ur{2014} & \rb{11287} & \ur{1508} & \rb{11375} & \ur{1530} & \rb{11463} & \ur{1552} & \rb{11551} & \ur{1574} & \ur{43} & \ur{1596} & \rb{11727} & \ur{1618} & \rb{11535}\\

23 & 14974 & \rb{12812} & \rb{12858} & \rb{12904} & \rb{12950} & \rb{12996} & \rb{13042} & \rb{13088} & \rb{13134} & \rb{13180} & \rb{13226} & \rb{13272} & \rb{13318} & \rb{13364} & \rb{13110}\\

24 & 16873 & \ur{2533} & \ur{889} & \ur{439} & \rb{14665} & \ur{173} & \rb{14761} & \ur{94} & \ur{633} & \ur{2005} & \rb{14953} & \ur{43} & \rb{15049} & \ur{2053} & \ur{665}\\

25 & 18926 & \rb{16376} & \rb{16426} & \rb{16476} & \ur{286} & \rb{16576} & \rb{16626} & \rb{16676} & \rb{16726} & \ur{181} & \rb{16826} & \rb{16876} & \rb{16926} & \rb{16976} & \ur{186}\\

26 & 21139 & \ur{3134} & \rb{18435} & \ur{2432} & \rb{18539} & \ur{2458} & \rb{18643} & \ur{2484} & \rb{18747} & \ur{2510} & \rb{18851} & \ur{2536} & \ur{43} & \ur{2562} & \rb{19059}\\

27 & 23518 & \rb{20548} & \ur{1198} & \rb{20656} & \rb{20710} & \ur{856} & \rb{20818} & \rb{20872} & \ur{94} & \rb{20980} & \rb{21034} & \ur{892} & \rb{21142} & \rb{21196} & \ur{910}\\

28 & 26069 & \ur{3823} & \rb{22933} & \ur{638} & \rb{23045} & \ur{3039} & \ur{173} & \ur{428} & \rb{23269} & \ur{3095} & \rb{23381} & \ur{442} & \rb{23493} & \ur{43} & \rb{23605}\\

29 & 28798 & \rb{25376} & \rb{25434} & \rb{25492} & \rb{25550} & \rb{25608} & \rb{25666} & \rb{25724} & \rb{25782} & \rb{25840} & \rb{25898} & \rb{25956} & \rb{26014} & \rb{26072} & \rb{26130}\\

30 & 31711 & \ur{4606} & \ur{1571} & \ur{3676} & \ur{439} & \ur{286} & \rb{28351} & \ur{3736} & \ur{1171} & \ur{94} & \rb{28591} & \ur{181} & \rb{28711} & \ur{3826} & \ur{43}\\

\hline
\end{tabular}
\caption{$R_3({\cal E}(3, 0;a, b, b))$ for $1 \leq b \leq 15$ and $1\leq a\leq 30$.
The previously unknown values are presented in boldface, and
the underlined entries correspond to equations whose coefficients are not coprime.} \label{tab:abb}
\end{table*}

\begin{table*}\scriptsize
\begin{tabular}{|p{0.25cm}|p{0.35cm}|p{0.35cm}|p{0.35cm}|p{0.35cm}|p{0.65cm}|p{0.65cm}|p{0.65cm}|p{0.65cm}|p{0.65cm}|p{0.65cm}|p{0.65cm}|p{0.65cm}|p{0.65cm}|p{0.65cm}|p{0.65cm}|}

\hline
{$b$\textbackslash{$a$}} & 1 & 2 & 3 & 4 & 5 & 6 & 7 & 8 & 9 & 10 & 11 & 12 & 13 & 14 & 15 \\ \hline

\hline

1 & 14 & \infinity & \infinity & \infinity & \infinity & \infinity & \infinity & \infinity & \infinity & \infinity & \infinity & \infinity & \infinity & \infinity & \infinity \\

2 & 1 & \ur{14} & 243 & \ur{\infinity} & \infinity & \ur{\infinity} & \infinity & \ur{\infinity} & \infinity & \ur{\infinity} & \infinity & \ur{\infinity} & \infinity & \ur{\infinity} & \infinity \\

3 & 54 & 54 & \ur{14} & 384 & 2000 & \ur{\infinity} & \infinity & \infinity & \ur{\infinity} & \infinity & \infinity & \ur{\infinity} & \infinity & \infinity & \ur{\infinity} \\

4 & \infinity & \ur{1} & 108 & \ur{14} & 875 & \ur{243} & 4459 & \ur{\infinity} & \infinity & \ur{\infinity} & \infinity & \ur{\infinity} & \infinity & \ur{\infinity} & \infinity \\

5 & \infinity & 105 & 135 & 180 & \ur{14} & 864 & 3430 & 3072 & 12393 & \ur{\infinity} & \infinity & \infinity & \infinity & \infinity & \ur{\infinity} \\

6 & \infinity & \ur{54} & \ur{1} & \ur{54} & 750 & \ur{14} & 3087 & \ur{384} & \ur{243} & \ur{2000} & \rb{27951} & \ur{\infinity} & \infinity & \ur{\infinity} & \ur{\infinity} \\

7 & \infinity & 455 & 336 & 308 & 875 & 756 & \ur{14} & 1536 & 8748 & 7500 & \rb{23958} & \rb{10368} & \rb{54925} & \ur{\infinity} & \infinity \\

8 & \infinity & \ur{\infinity} & 432 & \ur{1} & 1000 & \ur{108} & 2744 & \ur{14} & 8019 & \ur{875} & \rb{21296} & \ur{243} & \rb{48334} & \ur{4459} & \rb{97875} \\

9 & \infinity & \infinity & \ur{54} & 585 & 1125 & \ur{54} & 3087 & 1224 & \ur{14} & 6000 & \rb{18634} & \ur{384} & \rb{41743} & \rb{30184} & \ur{2000} \\

10 & \infinity & \ur{\infinity} & 1125 & \ur{105} & \ur{1} & \ur{135} & 3430 & \ur{180} & 7290 & \ur{14} & \rb{17303} & \ur{864} & \rb{37349} & \ur{3430} & \ur{243} \\

11 & \infinity & \infinity & 2019 & 847 & 1958 & 1188 & \rb{3773} & \rb{1672} & \rb{8019} & \rb{5500} & \ur{14} & \rb{6048} & \rb{35152} & \rb{24696} & \rb{77625} \\

12 & \infinity & \ur{\infinity} & \ur{\infinity} & \ur{54} & 2400 & \ur{1} & \rb{4116} & \ur{54} & \ur{108} & \ur{750} & \rb{15972} & \ur{14} & \rb{32955} & \ur{3087} & \ur{875} \\

13 & \infinity & \infinity & \infinity & 1710 & 3445 & 1963 & \rb{4459} & \rb{1456} & \rb{9477} & \rb{6500} & \rb{17303} & \rb{5616} & \ur{14} & \rb{21952} & \rb{60750} \\

14 & \infinity & \ur{\infinity} & \infinity & \ur{455} & 3675 & \ur{336} & \ur{1} & \ur{308} & \rb{10206} & \ur{875} & \rb{18634} & \ur{756} & \rb{30758} & \ur{14} & \rb{57375} \\

15 & \infinity & \infinity & \ur{\infinity} & 5408 & \ur{54} & \ur{105} & \rb{6615} & \rb{2760} & \ur{135} & \ur{54} & \rb{19965} & \ur{180} & \rb{32955} & \rb{20580} & \ur{14} \\

16 & \infinity & \ur{\infinity} & \infinity & \ur{\infinity} & 5725 & \ur{432} & \rb{7616} & \ur{1} & \rb{11664} & \ur{1000} & \rb{21296} & \ur{108} & \rb{35152} & \ur{2744} & \rb{54000}\\

17 & \infinity & \infinity & \infinity & \infinity & 8330 & 4743 & \rb{10064} & \rb{3825} & \rb{12393} & \rb{8500} & \rb{22627} & \rb{7344} & \rb{37349} & \rb{23324} & \rb{57375} \\

18 & \infinity & \ur{\infinity} & \ur{\infinity} & \ur{\infinity} & 12069 & \ur{54}  & \rb{10962} & \ur{585} & \ur{1} & \ur{1125} & \rb{23958} & \ur{54} & \rb{39546} & \ur{3087} & \ur{750}\\

19 & \infinity & \infinity & \infinity & \infinity & 16397 & 6726 & \rb{14782} & \rb{4332} & \rb{16853} & \rb{9500} & \rb{25289} & \rb{8208} & \rb{41743} & \rb{26068} & \rb{60750} \\

20 & \infinity & \ur{\infinity} & \infinity & \ur{\infinity} & \ur{\infinity} & \ur{1125} & \rb{14700} & \ur{105} & \rb{19080} & \ur{1} & \rb{26620} & \ur{135} & \rb{43940} & \ur{3430} & \ur{108}\\

21 & \infinity & \infinity & \ur{\infinity} & \infinity & \infinity & \ur{455} & \ur{54} & \rb{6699} & \ur{336} & \rb{12579} & \rb{27951} & \ur{308} & \rb{46137} & \ur{54} & \ur{875}\\

22 & \infinity & \ur{\infinity} & \infinity & \ur{\infinity} & \infinity & \ub{2019} & \rb{20580} & \ub{847} & \rb{25047} & \ub{1958} & \ur{1} & \ub{1188} & \rb{48334} & \ub{3773} & \rb{74250} \\

23 & \infinity & \infinity & \infinity & \infinity & \infinity & \rb{19056} & \rb{27853} & \rb{8556} & \rb{32453} & \rb{17250} & \rb{35926} & \rb{9936} & \rb{50531} & \rb{31556} & \rb{77625} \\

24 & \infinity & \ur{\infinity} & \ur{\infinity} & \ur{\infinity} & \infinity & \ur{\infinity} & \rb{28956} & \ur{54} & \ur{432} & \ub{2400} & \rb{39600} & \ur{1} & \rb{52728} & \ub{4116} & \ur{1000}\\

25 & \infinity & \infinity & \infinity & \infinity & \ur{\infinity} & \infinity & \rb{40163} & \rb{10200} & \rb{42975} & \ur{105} & \rb{47850} & \rb{12850} & \rb{54925} & \rb{34300} & \ur{135}\\

\hline 

\end{tabular}
\caption{$R_3({\cal E}(3, 0;a, a, b))$ for $1 \leq a\leq 15$ and $1\leq b\leq 25$.
The previously unknown values are presented in boldface, and
the underlined entries correspond to equations whose coefficients are not coprime.
}\label{tab:aab}
\end{table*}

\subsection{Some new exact values of $R_3({\cal E}(3,0;a,b,b))$}
We have computed $R_3({\cal E}(3,0;a,b,b))$ for $1\leq b\leq 15$ and $1\leq a\leq 30$
and given the results in Table~\ref{tab:abb}.
By Rado's theorem \ref{th-rado}, all elements in Table~\ref{tab:abb} are finite.
Altogether, the total CPU time on unsatisfiable instances was 21,790 seconds,
and the total CPU time on satisfiable instances was 13,516 seconds, 
amounting to 9.81 total hours. A maximum of 4.3 GiB of memory was used across all instances.

\subsection{Some new exact values of $R_3({\cal E}(3,0;a,a,b))$}
By Theorem \ref{th:chang2022}, $R_3({\cal E}(3, 0;a, a, 1))$ is infinite for $a\geq 2$ and $R_3({\cal E}(3, 0;1, 1, b))$ is infinite for $b\geq 4$.
Also, by Chang et al.~\cite[Lemma 3.1]{chang2022}, $R_3({\cal E}(3, 0;a, a, b))$ is infinite
if $2a \leq a^2/b$ or $2a \leq \sqrt{ab}$.
They provided the values of $R_3({\cal E}(3,0;a,a,b))$ for $1\leq a, b\leq 10$ and also for $3\leq a\leq 6$ and $11\leq b\leq 20$. 
We have extended them for $1\leq a\leq 15$ and $1\leq b\leq 25$ and
reported these numbers in Table~\ref{tab:aab}.
The instances in this table proving upper bounds required a CPU time of 882,211 seconds to solve, while the instances proving lower bounds required 176,375 seconds to solve, 
amounting to 294.05 total hours.  A maximum of 26.3 GiB of memory was used across all instances.

\subsection{Some new patterns for $R_3({\cal E}(3,0;a,b,b))$}
The data on $R_3({\cal E}(3, 0; a, b, b))$ presented in Table~\ref{tab:abb} inspired us to make some general observations and a conjecture described below.

\begin{observation}\label{obs:abb-lb}
  For coprime integers\/ $a$ and\/ $b$, the data in Table~\ref{tab:abb} reveals the following.
  \begin{center}
  \begin{tabular}{cc}
  \begin{tabular}{|c|c|c|}
  \hline
  $b$ & Range of\/ $a$ & $R_3({\cal E}(3, 0; a, b, b))$ \\
  \hline
  $2$ & $[7,30]$ & $a^3+a^2+5a+1$ \\
  \hline
  $3$ & $[4,30]$ & $a^3+a^2+7a+1$ \\
  \hline
  $4$ & $[7,30]$ & $a^3+a^2+9a+1$ \\
  \hline
  $5$ & $[7,30]$ & $a^3+a^2+11a+1$ \\
  \hline
  $6$ & $[11,30]$ & $a^3+a^2+13a+1$ \\
  \hline
  $7$ & $[11,30]$ & $a^3+a^2+15a+1$ \\
  \hline
  $8$ & $[13,30]$ & $a^3+a^2+17a+1$ \\
  \hline
  \end{tabular} &
  \begin{tabular}{|c|c|c|}
  \hline
  $b$ & Range of\/ $a$ & $R_3({\cal E}(3, 0; a, b, b))$ \\
  \hline
  $9$ & $[14,30]$ & $a^3+a^2+19a+1$ \\
  \hline
  $10$ & $[17,30]$ & $a^3+a^2+21a+1$ \\
  \hline
  $11$ & $[17,30]$ & $a^3+a^2+23a+1$ \\
  \hline
  $12$ & $[19,30]$ & $a^3+a^2+25a+1$ \\
  \hline
  $13$ & $[20,30]$ & $a^3+a^2+27a+1$ \\
  \hline
  $14$ & $[23,30]$ & $a^3+a^2+29a+1$ \\
  \hline
  $15$ & $[26,30]$ & $a^3+a^2+31a+1$ \\
  \hline
  \end{tabular}
  \end{tabular}
  \end{center}
\end{observation}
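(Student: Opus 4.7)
The plan is to verify the observation as a combination of lower bounds from Theorem \ref{th:lb-abb} and matching upper bounds from the SAT computations recorded in Table \ref{tab:abb}. For each row of the observation, the claim is that $R_3({\cal E}(3,0;a,b,b)) = a^3+a^2+(2b+1)a+1$ on a specified range of $a$; since this formula coincides with the lower bound from Theorem \ref{th:lb-abb}, the task reduces to (i) checking that the hypotheses of that theorem are met throughout the range and (ii) confirming that each tabulated exact value equals the formula.

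For step (i), the inequality $a^2+a+b>b^2+ba$ rearranges to $a^2-(b-1)a-b(b-1)>0$, a quadratic in $a$ with positive root $\alpha_b = \bigl((b-1)+\sqrt{(b-1)(5b-1)}\bigr)/2$. I would verify, for each $b\in\{3,\dots,15\}$ listed in the observation, that the smallest $a$ in the specified range strictly exceeds $\alpha_b$ and is coprime to $b$ with $a>b$; this is a short finite check. Theorem \ref{th:lb-abb} then delivers the lower bound uniformly on the range. For step (ii), I would compute the formula at each $(a,b)$ in the range and compare with the table entry; because the tabulated value is certified as exact by the SAT solver (a satisfiable instance at $n=R_3-1$ and an unsatisfiable instance at $n=R_3$), a match establishes the matching upper bound.

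The main obstacle is the $b=2$ row, which lies outside the hypotheses of Theorem \ref{th:lb-abb} since that theorem requires $b\geq 3$. To handle this row I would either adapt the symbolic 3-colouring used in the proof of Theorem \ref{th:lb-abb} to the case $b=2$ and re-run \AC\ to verify correctness, or weaken the observation to $b\geq 3$. A more subtle point is that the observation implicitly stops at $a=30$ only because that is the boundary of the current SAT computations; pushing equality to all $(a,b)$ satisfying the hypotheses of Theorem \ref{th:lb-abb} is exactly the content of Conjecture \ref{conj-abb}, and would require a genuinely new upper-bound argument rather than additional tabular data.
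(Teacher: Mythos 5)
Your proposal is correct in substance but takes a detour the paper does not take. The paper treats Observation~\ref{obs:abb-lb} as a purely empirical summary of Table~\ref{tab:abb}: each tabulated entry is an exact value, certified on the computational side by a satisfiable SAT instance at $n-1$ and an unsatisfiable one at $n$, and the formulas $a^3+a^2+(2b+1)a+1$ were obtained by curve fitting (constant third differences, then Gaussian--Jordan elimination); the observation therefore amounts to the finite check that the formula and the table agree at every coprime pair in the stated ranges, with no appeal to Theorem~\ref{th:lb-abb} at all---indeed the observation is what motivated that theorem and Conjecture~\ref{conj-abb}, not the other way around. Your route through Theorem~\ref{th:lb-abb} is logically sound (there is no circularity, since the theorem is proved independently via the symbolic colouring checked by \AC), and your reduction of the hypothesis $a^2+a+b>b^2+ba$ to $a>\alpha_b=\bigl((b-1)+\sqrt{(b-1)(5b-1)}\bigr)/2$ is accurate, but it is redundant: the satisfiable instances already certify the lower bound for each finite $(a,b)$, so the SAT data certifies both directions, not just ``the matching upper bound'' as you put it, and your step~(ii) alone suffices for every row. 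In particular, the ``main obstacle'' you identify at $b=2$ is spurious---that row is covered by exactly the same table comparison as the others, so neither adapting the symbolic colouring to $b=2$ nor weakening the observation to $b\geq 3$ is needed. What your approach does buy is a conceptual link between the empirical ranges and the hypothesis of Theorem~\ref{th:lb-abb} (the range thresholds in the observation track $\alpha_b$), and your closing remark is exactly the paper's view: extending equality beyond $a\leq 30$ is the content of Conjecture~\ref{conj-abb} and would require a new upper-bound argument.
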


The formulas presented above were derived through curve fitting. Our analysis began by identifying patterns within each column of data. Some numbers exhibited a constant difference, suggesting a linear relationship, while others showed a constant third difference, indicating a cubic relationship.
We constructed systems of linear and cubic equations based on these observations.
We then applied Gaussian--Jordan elimination to solve these systems, obtaining the best-fitting formulas describing the sequence and
ultimately arriving at the following conjecture.

\begin{conjecture}%
For coprime positive integers $a$ and $b$ with $a^2+a+b > b^2+ba$ and $a > b\geq 3$,
${R_3}({\cal E}(3, 0; a, b, b)) = a^3 + a^2 + (2b+1)a + 1$.
\end{conjecture}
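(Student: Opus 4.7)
The lower bound $R_3({\cal E}(3,0;a,b,b)) \geq a^3 + a^2 + (2b+1)a + 1$ is supplied by Theorem~\ref{th:lb-abb}, so the remaining content of the conjecture is the matching upper bound. The plan is therefore to prove that every $3$-colouring of $[1,n]$ with $n = a^3 + a^2 + (2b+1)a + 1$ admits a monochromatic solution of $ax+by=bz$.

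I would first characterise the solution set. Rewriting the equation as $b(z-y) = ax$ and using $\gcd(a,b)=1$ to conclude $b \mid x$, every positive solution takes the form $(x,y,z) = (bk, y, y+ak)$ for positive integers $k,y$. This yields the propagation rule
\[
\chi(bk) = \chi(y) \;\Longrightarrow\; \chi(y+ak) \neq \chi(bk),
\]
together with two useful specialisations: taking $y = bk$ gives $\chi(bk) \neq \chi((a+b)k)$ for every $k$, and taking $k = 1$ converts colour information at $b$ into constraints along the arithmetic progression $y \mapsto y+a$.

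The argument would proceed by contradiction. Suppose $\chi : [1,n] \to \{R,G,B\}$ avoids monochromatic solutions; by symmetry of colours one may assume $\chi(1) = R$. I would then trace how the propagation rule forces the colours of the ``landmark'' values $b$, $a$, $a+1$, $a+b$, $b(a+1)$, $a^2$, $a^3$, $a^3 + a^2$, and so on up to $n-1$. The near-extremal colouring produced in the proof of Theorem~\ref{th:lb-abb} supplies the intended target colour at each landmark, so the strategy is to show that, under the hypotheses $a > b \geq 3$ and the auxiliary inequality $a^2 + a + b > b^2 + ba$, every deviation from that colouring creates a monochromatic triple strictly before index $n$, while the colouring itself collides at index $n$.

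The main obstacle is case explosion. The threshold $n = a^3 + a^2 + (2b+1)a + 1$ is engineered to be the exact point where all admissible colourings of $[1,n-1]$ lose their remaining degree of freedom, and pinning the final collision requires subcases indexed by residues modulo $a$ and $b$, by which of the three colours appears at each landmark, and by the sign of certain linear comparisons between $a$ and $b$. This is precisely the setting for which the \AC\ tool developed in this paper is designed: once the argument is reduced to verifying that a finite family of symbolically defined subsets of $[1,n]$ covers the interval and is closed under the forbidden-triple relation, \AC\ can check the resulting symbolic set identities mechanically, avoiding the bookkeeping errors that plague such hand computations.
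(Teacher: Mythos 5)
There is a genuine gap: the statement you are addressing is a \emph{conjecture} in the paper, not a theorem, and your proposal does not close it either. The paper only establishes the lower bound $R_3({\cal E}(3,0;a,b,b)) \geq a^3+a^2+(2b+1)a+1$ (Theorem~\ref{th:lb-abb}, via an explicit symbolic colouring verified by \AC); the matching upper bound is supported solely by SAT computations for the finitely many $(a,b)$ pairs in Table~\ref{tab:abb} and remains open for general $a,b$. Your proposal correctly identifies that the upper bound is the missing content, but everything after that is a plan rather than an argument: the ``propagation rule'' $b\mid x$ and the landmark values are stated, yet no actual forcing chain is carried out, and the central claim---that every deviation from the extremal colouring creates a monochromatic triple before $n$ while the extremal colouring itself collides at $n$---is precisely the assertion that needs proof. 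Nothing in the proposal shows why $n = a^3+a^2+(2b+1)a+1$ is the point of collapse, nor how the hypothesis $a^2+a+b>b^2+ba$ enters the upper-bound direction.

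A second, more structural problem is the appeal to \AC\ to finish the job. As described in Sections~\ref{sec:autocase}--\ref{sec:analyzer}, \AC\ verifies that one \emph{given} symbolically defined $3$-colouring of $[1,N]$ partitions the interval and admits no monochromatic solution; that is an existential statement over colourings and yields only lower bounds. An upper bound is universally quantified over all $3$-colourings of $[1,n]$, which is not expressible as ``a finite family of symbolic set identities'' in the tool's framework---there is no finite list of symbolic sets whose closure properties you could hand to \AC\ without first supplying the human forcing argument that reduces all colourings to finitely many symbolic cases. That reduction is the hard combinatorial core (it is what Chang, De Loera, and Wesley had to do by hand for their exact results), and it is absent here; until it is supplied, the proposal proves nothing beyond what Theorem~\ref{th:lb-abb} already gives.
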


Before proceeding, we note
the equation ${\cal E}(3, 0; a, -a, b)$ can be transformed into ${\cal E}(3, 0; b, a, a)$ by permuting variables. This equivalence
along with a theorem of Chang, De Loera, and Wesley~\cite{chang2022} provides a closed-form
expression for $R_3({\cal E}(3, 0; a, 1, 1))$.

\begin{proposition}\label{prop:r3a11}
$R_3({\cal E}(3, 0; a, 1, 1)) = a^3+5a^2+7a+1$
for $a\geq 1$.
\begin{proof}
By permuting variables we have $R_3({\cal E}(3, 0; a, 1, 1)) = R_3({\cal E}(3, 0; 1, -1, a))$. By~\cite[Thm~1.2]{chang2022}, for $m\geq 3$, 
$R_3({\cal E}(3, 0; 1, -1, m-2)) = m^3-m^2-m-1$. Replacing $a$ with $m-2$, we get
for $a\geq 1$, $R_3({\cal E}(3, 0; 1, -1, a)) = a^3+5a^2+7a+1$.
Hence, $R_3({\cal E}(3, 0; a, 1, 1)) = a^3+5a^2+7a+1.$
\end{proof}
\end{proposition}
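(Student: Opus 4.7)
The plan is to reduce the claim to a theorem already stated in the introduction by exploiting a symmetry of the equation. First I would observe that ${\cal E}(3, 0; a, 1, 1)$ encodes $ax_1+x_2=x_3$, which can be rewritten as $x_3-x_2=ax_1$. After renaming $y_1=x_3$, $y_2=x_2$, $y_3=x_1$, this becomes $y_1-y_2=ay_3$, which is exactly the equation encoded by ${\cal E}(3, 0; 1, -1, a)$. Because the set of monochromatic solutions depends only on the underlying multiset of variable values and not on how the variables are labelled, the permutation $(x_1,x_2,x_3)\mapsto(y_3,y_2,y_1)$ yields a bijection between the monochromatic solutions of the two equations in any colouring of $[1,n]$. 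Hence $R_3({\cal E}(3, 0; a, 1, 1))=R_3({\cal E}(3, 0; 1, -1, a))$.

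Next I would invoke the Chang--De Loera--Wesley formula quoted in the introduction, namely $R_3({\cal E}(3, 0; 1, -1, m-2))=m^3-m^2-m-1$ for $m\geq 3$. Setting $m=a+2$ so that $m-2=a$, the hypothesis $m\geq 3$ translates to $a\geq 1$, which matches the range asserted in the proposition.

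Finally I would expand $(a+2)^3-(a+2)^2-(a+2)-1$ and verify that it simplifies to $a^3+5a^2+7a+1$. This is a purely mechanical polynomial expansion.

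There is essentially no obstacle here: the proof is a two-line reduction followed by elementary algebra. The only conceptual point that warrants attention is the variable-permutation step, where one must confirm that ${\cal E}(3, 0; a, 1, 1)$ and ${\cal E}(3, 0; 1, -1, a)$ are genuinely related by relabelling the three variables, and hence have identical $k$-colour Rado numbers for every $k$; once this is granted, the rest is a substitution into a known closed form.
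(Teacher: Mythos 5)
Your proposal is correct and follows essentially the same route as the paper: permute variables to identify ${\cal E}(3,0;a,1,1)$ with ${\cal E}(3,0;1,-1,a)$, then substitute into the Chang--De Loera--Wesley formula $R_3({\cal E}(3,0;1,-1,m-2))=m^3-m^2-m-1$ with $m=a+2$ and expand. Your explicit justification of the relabelling bijection and the polynomial expansion are just slightly more detailed versions of the paper's two-line argument.
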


\subsection{Some new patterns for $R_3({\cal E}(3,0;a,a,b))$}
The data on $R_3({\cal E}(3, 0; a, a, b))$ presented in Table~\ref{tab:aab} has inspired the following conjecture and some general observations
described below.

\begin{conjecture}%
For coprime positive integers $a$ and $b$, if $a$ is odd, then
for $3\leq a<b\leq 2a-1$,
$R_3({\cal E}(3, 0; a, a, b)) = a^3b$.
\end{conjecture}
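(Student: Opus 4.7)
Since the statement is a conjecture, a full proof requires matching lower and upper bounds $R_3({\cal E}(3,0;a,a,b)) = a^3 b$. My plan would be to establish the two directions separately, leveraging the paper's SAT$+$\AC\ framework as in Theorem~\ref{th:lb-aaap1}.

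For the lower bound $R_3({\cal E}(3,0;a,a,b)) \geq a^3 b$, the plan is to generalize the construction used to prove Theorem~\ref{th:lb-aaap1}, which handles the special case $b=a+1$. First I would harvest the satisfying assignments produced by the SAT solver for several pairs $(a,b)$ with $a$ odd and $a+1\leq b\leq 2a-1$ (for instance $(3,4),(3,5),(5,6),\dots,(5,9),(7,8),\dots,(7,13)$), then compare these to the colouring behind Theorem~\ref{th:lb-aaap1} to isolate how each colour class depends on $b$ as well as $a$. The goal is to express $R$, $G$, $B$ symbolically as unions of sets of the form $\set{f(i)}$ for $i$ ranging over intervals whose endpoints are polynomials in $a$ and $b$, possibly filtered by divisibility conditions, and then feed the triple $(R,G,B)$ to \AC\ to verify (i) that they partition $[1,a^3b-1]$ and (ii) that no monochromatic $(x,y,z)$ with $ax+ay=bz$ exists.

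The key structural fact to exploit is that since $\gcd(a,b)=1$, any solution of $ax+ay=bz$ satisfies $a\mid z$ and $b\mid x+y$; writing $z=am$ and $x+y=bm$, the solutions form a one-parameter family
\[
\set{(x,\,bm-x,\,am) : 1\leq x\leq bm-1},
\]
indexed by $m\in[1,a^2b-1]$ when $z\leq a^3b-1$. This multiplicative/additive structure strongly suggests colour classes indexed by the hierarchy $[1,b-1], [b,ab-1], [ab,a^2b-1], [a^2b,a^3b-1]$, with each tier further refined by residues modulo $a$ and $b$. The odd-$a$ and $b\leq 2a-1$ hypotheses presumably pin down which residues go to which colour so that the two-term sums $x+y=bm$ never fall inside the same class that contains $am$; this is exactly the kind of certificate \AC\ was designed to check.

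For the upper bound $R_3({\cal E}(3,0;a,a,b)) \leq a^3 b$, my plan is a pigeonhole-plus-case-analysis argument: from any candidate 3-colouring of $[1,a^3b]$, deduce by examining solutions involving small anchors ($1,b,a,ab,a^2,a^2b,a^3$ and their companions in the family $(x,bm-x,am)$) that certain numbers are forced into the same colour, ultimately producing a monochromatic triple. The hard part will be making the case split complete yet tight precisely at $n=a^3b$ uniformly in $b\in[a+1,2a-1]$, since slightly smaller ranges of $b$ admit colourings certified by the lower-bound construction. I expect this to be the main obstacle and the reason the statement remains a conjecture: the lower bound can plausibly be promoted to a theorem by executing the \AC-verified construction above, but the matching upper bound seems to require a genuinely new structural argument, perhaps guided by reading off which small forced monochromatic triples actually appear in the unsatisfiability proofs Kissat produces for the critical instances in Table~\ref{tab:aab}.
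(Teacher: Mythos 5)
This statement is one the paper itself does not prove: it is stated as a conjecture (Conjecture~\ref{conj-aab-odd}), supported only by the computed values in Table~\ref{tab:aab} and by the lower-bound result for the single family $b=a+1$ (Theorem~\ref{th:lb-aaap1}). Your proposal, by your own admission, is a research plan rather than a proof, so there is a genuine gap on both sides of the claimed equality. For the lower bound, your plan coincides in spirit with what the paper actually carries out, but only for $b=a+1$ and $a\geq 7$: there the authors extract a symbolic colouring of $[1,a^3(a+1)-1]$ from SAT certificates (the sets $R_\ell, B_\ell, R_r, B_r$ and $S_a(N)\setminus S_{a^2}(N)$) and verify it with \AC. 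Extending this to all $b$ with $a<b\leq 2a-1$ requires exhibiting a colouring of $[1,a^3b-1]$ parameterized by \emph{two} symbolic variables and re-proving the analogues of the integrality lemma and the case analysis in Appendix~\ref{pr:lb-aaap1}; you correctly note $a\mid z$ and $b\mid x+y$ (so solutions are $(x,bm-x,am)$), but you do not actually produce the sets, the divisibility filters, or the argument that the sums $bm$ avoid the relevant colour class, so the lower bound is not established, only conjectured to be obtainable.

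The more serious gap is the upper bound $R_3({\cal E}(3,0;a,a,b))\leq a^3b$, for which neither you nor the paper has any method. All upper bounds in the paper come from finite unsatisfiable SAT instances at fixed $(a,b)$; these cannot be lifted to a statement uniform in symbolic $a$ and $b$, and \AC\ as described only certifies colourings (i.e., lower bounds), not the nonexistence of colourings. Your sketch---pigeonhole on ``small anchors'' such as $1,a,b,ab,a^2b,a^3$, or mining Kissat refutations for forced monochromatic triples---names no concrete chain of forced colours and gives no reason the argument would close exactly at $n=a^3b$ for every odd $a$ and every $b\in(a,2a-1]$; indeed the known closed-form upper-bound proofs in this area (e.g., in Chang, De Loera, and Wesley) require bespoke structural arguments that do not obviously generalize here. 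So the proposal should be read as a plausible programme whose lower-bound half mirrors the paper's $b=a+1$ technique, while the matching upper bound remains entirely open---which is precisely why the paper records this statement as a conjecture rather than a theorem.
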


\begin{observation}
For odd positive integers \(7 \leq a \leq 15\), we have  
\myitemize{11pt}{
\item \(R_3({\cal E}(3,0; a, a, a-1)) = R_3({\cal E}(3,0; a, a, a+2)) = a^3(a+2)\);
\item if \(\gcd(a,a+3)=1\), then \(R_3({\cal E}(3,0; a, a, a-2)) = R_3({\cal E}(3,0; a, a, a+3)) = a^3(a+3)\);
\item \(R_3({\cal E}(3,0; a, a, \frac{a+1}{2})) = R_3({\cal E}(3,0; a, a, 2a-1)) = a^3(2a-1)\);
\item if \(\gcd(a,\frac{a+3}{2})=1\), then
\(R_3({\cal E}(3,0; a, a, \tfrac{a+3}{2})) = R_3({\cal E}(3,0; a, a, 2a-4))=a^3(2a-4)\).
}
\end{observation}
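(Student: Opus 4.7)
Since the observation is restricted to the five odd values $a \in \set{7, 9, 11, 13, 15}$, the plan is to verify the asserted equalities by direct examination of the computed values in Table~\ref{tab:aab}. For each such $a$ and each relevant $b$, I would locate the corresponding entry in the table and confirm that it matches the stated closed-form expression. This reduces the observation to a finite, fully computable verification rather than a general mathematical argument.

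The content of the observation has two layers: first, the paired equalities showing that two distinct values of $b$ yield the same value of $R_3({\cal E}(3,0;a,a,b))$; second, the closed-form expression for that common value. Both layers are established via the SAT-based computation described in Section~\ref{sec:computational-results}, which produces for each instance a satisfiable certificate at $n-1$ and an unsatisfiable certificate at $n$. For items (ii) and (iv), I would first check the gcd hypothesis to determine which values of $a$ in the range are admissible; for items (i) and (iii) the coprimality of $a$ with the relevant second coefficient is automatic since $a$ is odd. In each item, the larger of the two paired values of $b$ is covered by Conjecture~\ref{conj-aab-odd}, so the novel content is the equality with the smaller value of $b$.

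The main obstacle to strengthening this observation into a theorem is the absence of an algebraic or combinatorial explanation for the paired equalities. One natural line of attack would be to build a bijection between 3-colourings of $[1, a^3 b' - 1]$ that avoid monochromatic solutions to $ax + ay = b_1 z$ and those that avoid monochromatic solutions to $ax + ay = b_2 z$, where $(b_1, b_2)$ is one of the paired values and $b'$ is the common right-hand coefficient. For item (i), the solutions to both equations with odd $a$ force $z$ to be a multiple of $a$ and take the form $z = ak$ with $x + y = k(a-1)$ or $x + y = k(a+2)$ respectively, hinting at an underlying structural correspondence. Identifying such a bijection, and then extending the claim to all odd $a \geq 7$, would upgrade the observation into a theorem; I expect this to be the hardest part and leave it as future work.
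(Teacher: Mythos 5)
Your plan coincides with what the paper actually does: the statement is an empirical \emph{observation} read off the SAT-computed data of Table~\ref{tab:aab}, and the paper offers no argument beyond those computations, so finite table lookup (plus checking the gcd hypotheses and noting the overlap with Conjecture~\ref{conj-aab-odd}) is exactly the intended justification. The only caveat is that for $a=15$ in the third item the paired value $b=2a-1=29$ lies outside the tabulated range $1\leq b\leq 25$, so lookup alone confirms only the $b=(a+1)/2=8$ entry (which indeed equals $15^3\cdot 29=97875$); verifying that pair fully would require an additional SAT instance not printed in the table.
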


\begin{observation}
For coprime positive integers $a$ and $b$, 
\[
{R_3}({\cal E}(3, 0; a, a, b)) =
\begin{cases}
a^3b/2 & \text{if $a\in\set{6,10,14}$ and $a<b\leq 2a-1$,} \\
a^3b/4 & \text{if $a\in\set{12}$ and $a<b\leq 2a-1$.} \\
\end{cases}
\]
A general characterization of the cases with even $a$ is not possible at the moment due to lack of further data. 
\end{observation}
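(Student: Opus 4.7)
The observation is essentially a summary of empirical data from Table~\ref{tab:aab}, so my plan is to treat its proof as a finite verification and outline how each piece would be rigorously checked. First I would enumerate the complete list of $(a,b)$ pairs falling under each case: for $a=6$, the coprime $b$ in $(6,11]$ are $\set{7,11}$; for $a=10$, the coprime $b$ in $(10,19]$ are $\set{11,13,17,19}$; for $a=14$, the coprime $b$ in $(14,27]$ are $\set{15,17,19,23,25,27}$; and for $a=12$, the coprime $b$ in $(12,23]$ are $\set{13,17,19,23}$. For each such pair I would look up the entry in Table~\ref{tab:aab} and confirm numerically that it matches the predicted value --- for instance, $a=6,b=11$ gives $6^3\cdot 11/2=1188$, and $a=12,b=23$ gives $12^3\cdot 23/4 = 9936$, both agreeing with the table.

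Each table entry already carries a two-sided proof via the SAT encoding of Section~\ref{sec:computational-results}: a satisfiable instance on $n = R_3(\mathcal{E}) - 1$ witnesses the lower bound by exhibiting a valid 3-colouring, and an unsatisfiable instance on $n = R_3(\mathcal{E})$ certifies the upper bound. To strengthen confidence in the lower bounds, one could extract the SAT-discovered colouring for each pair, attempt to express it as symbolic sets $R$, $G$, $B$ parameterized by $a$, and hand the result to \AC\ for checking that the three sets partition $[1, R_3 - 1]$ and jointly avoid monochromatic solutions to $ax + ay = bz$, exactly as is done for Theorem~\ref{th:lb-aaap1}. The single pair $(a,b) = (14,27)$ lies just outside the $b\leq 25$ range reported in Table~\ref{tab:aab} and would require one extra pair of SAT instances (or else a restatement of the observation with $b \leq 25$).

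The real obstacle is not verification but generalization. The observation singles out only four sporadic even values $a \in \set{6,10,12,14}$, and it is not obvious why the denominator $2$ appearing for three of them is replaced by $4$ for $a=12$, nor whether the pattern extends to larger even $a$ such as $16$, $18$, or $20$; indeed, the very reason the authors restrict to finitely many values is the absence of further computational data and the apparent lack of a unifying symbolic colouring. Thus the hardest step in upgrading the observation to a genuine theorem would be discovering an infinite family of explicit symbolic 3-colourings, parameterized by $a$ in a suitable residue class modulo some fixed integer, whose correctness could then be discharged automatically by \AC. Without such a construction the observation must remain a computer-verified finite claim, which is precisely the authors' stated position.
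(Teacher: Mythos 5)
Your proposal matches the paper's treatment: the observation is purely empirical, backed only by the SAT-computed entries of Table~\ref{tab:aab} (each carrying a satisfiable/unsatisfiable pair certifying lower and upper bounds), and your case enumeration and numerical checks against the table are correct. Your remark that $(a,b)=(14,27)$ falls outside the tabulated range $b\leq 25$ is a fair caveat about the stated range $a<b\leq 2a-1$, but it does not change the fact that your verification strategy is essentially the one the paper relies on.
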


\section{General Results}\label{sec:general-results}
In Section~\ref{sec:computational-results}, we extended the known numerical values of Rado numbers corresponding to the equations ${\cal E}(3, 0; a, a, b)$ and ${\cal E}(3, 0; a, b, b)$ and observed several new bounds for these numbers.
In this section, we discuss and prove some of those bounds using \AC.
An overview of how \AC\ works is first provided in Section~\ref{sec:autocase},
and then as a demonstrative example we use \AC\ to prove
Proposition~\ref{prop:r3a11} in Section~\ref{sec:ac-example}.
Finally, we use \AC\ to prove Theorems~\ref{th:lb-abb} and~\ref{th:lb-aaap1}
in Sections~\ref{sec:thm1proof} and~\ref{sec:thm2proof}.

\subsection{Automating case-based analysis with \texttt{SymPy} and \texttt{Z3}}\label{sec:autocase}
Shallit \cite{shallit2021} proposed that traditional case-by-case analyses are better performed by automated search algorithms to enhance both efficiency and correctness. In that direction, we employ an automated verification approach (using our tool \AC) to verify the case-based proofs of Theorems~\ref{th:lb-abb} and~\ref{th:lb-aaap1}.
Our proof strategy relies on case-based analysis of solving linear equations whose variables lie in symbolic sets filtered through arithmetic and logical constraints.
This section describes how we automate this process by symbolically encoding the problem structure and constraint system using \texttt{SymPy} and performing satisfiability checking using the SMT solver \texttt{Z3}.

The input provided to \AC\ is the linear equation to be considered
along with a colouring of the integers $[1,N]$ specified by a collection
of symbolic sets as described in Section~\ref{sec:spec}.
\AC\ verifies that the colouring is valid by ensuring the size of the
symbolic sets sum to exactly $N$ (see Section~\ref{sec:symbolicsize}) and that
the sets are pairwise disjoint (see Section~\ref{sec:symbolicdisjoint}).
This verifies the provided symbolic sets provide a partition of $[1,N]$.
Finally, \AC\ verifies that there are no solutions
of the linear equation under consideration where all variables are
from symbolic sets of the same colour (see Section~\ref{sec:analyzer}).
Thus, the symbolic sets define a colouring of the integers $[1,N]$
that have no monochromatic solutions to the linear equation,
thereby proving a lower bound of $N+1$ on the Rado number for that linear equation.

\subsubsection{Input specification}\label{sec:spec}

The input to our automated system consists of:

\myitemize{11pt}{
    \item A symbolic linear equation $\cal E$, such as $ax + ay = (a+1)z$, where coefficients and assumptions on the coefficients (e.g., $a \geq 7$, $a$ odd) are symbolic.
    \item A symbolic $N$ corresponding to the lower bound on $R_k({\cal E})$, e.g., $N=a^3(a+1)-1$.
    \item A partition of the domain $[1, N]$ into named symbolic sets, each specified by
    \myitemize{11pt}{
        \item symbolic bounds (e.g., $[a, a^4 + a^3 - a]$),
        \item a format expression generating integers in the set (e.g., $a(a+1)i + aj + k$), and
        \item optional divisibility filters (e.g., integers in the set must be divisible by $a^2$ but not $a^3$).
    }
    \item A $k$-colouring of $[1,N]$ defined in terms of the above symbolic sets.
}
These inputs are defined using \texttt{SymPy} to facilitate symbolically: 
\myitemize{11pt}{
  \item Building and manipulating (e.g., simplifying, expanding, combining, substituting symbolically) interval bounds, format expressions, and divisibility properties.
  \item Calculating size, disjointness, and covering conditions.
  The union \(\bigcup_{i=1}^k S_i\) is considered a partition of $[1,N]$ if and only if 
  \(\bigcup_{i=1}^k S_i\) has size \(N\), the sets $S_i$ are pairwise disjoint, and \(\bigcup_{i=1}^k S_i \subseteq [1,N]\).
  \item Simplifying and structurally transforming constraints before they are fed to \ZT.
}

\subsubsection{Symbolic size computation using \texttt{SymPy}}\label{sec:symbolicsize}

\SymPy\ plays a central role in enabling symbolic reasoning, algebraic manipulation, and structured enumeration in the symbolic size analysis pipeline. The key uses of \SymPy\ can be itemized as follows:

\myitemize{11pt}{
  \item \textbf{Size with divisibility filters}:
    Let the underlying interval be defined as \([L, H]\).
    Let \(D\) and \(\mathit{ND}\) denote the sets of required and excluded divisors, respectively, and 
    consider \[S = \set{\,x \in [L, H]: \forall{d}\in D, d\mid x \text{ and } \forall{n}\in \mathit{ND}, n \nmid x\,}.\]
    For \(T\subseteq \mathit{ND}\), define
    \[S_T = \set{\,x \in [L, H]: \mathrm{lcm}(D\cup T)\mid x\,}.\]
    The number of elements \(x\in[L,H]\) divisible by all divisors in \(D\) and none in \(\mathit{ND}\) is
    \[|S| = \sum_{T\subseteq \mathit{ND}}(-1)^{|T|}|S_T| = 
      \sum_{T \subseteq \mathit{ND}} (-1)^{|T|}\left(
        \left\lfloor \frac{H}{\mathrm{lcm}(D \cup T)} \right\rfloor -
        \left\lfloor \frac{L - 1}{\mathrm{lcm}(D \cup T)} \right\rfloor\right) .
    \]
    This formula uses inclusion-exclusion to subtract overlapping exclusions in order to count elements satisfying the divisibility constraints.
    \texttt{SymPy} contributes with \texttt{floor}, \texttt{ceiling}, and \texttt{lcm} to enforce and combine arithmetic conditions.
    For example, consider the interval \([1, b^2a]\) with \(D=\set{b}\) and \(\mathit{ND}=\set{\mathstrut\smash{b^2}}\). Then
    the size of the resulting set is \[
    \left\lfloor \frac{b^2 a}{\mathrm{lcm}(b)} \right\rfloor -
    \left\lfloor \frac{b^2 a}{\mathrm{lcm}(b, b^2)} \right\rfloor =
    \left\lfloor \frac{b^2 a}{b} \right\rfloor -
    \left\lfloor \frac{b^2 a}{b^2} \right\rfloor = ba - a = a(b - 1) .
    \]    

    \item \textbf{Format-based symbolic summation}:
    Consider a symbolic set defined by a format expression \(
    x = f(i_1, i_2, \dotsc, i_k)\)
    where \(i_j \in [l_j, h_j]\). When $f$ is injective, the total size of the set is given by
    \[
    \left| \{ x = f(i_1, \dots, i_k) \mid l_j \leq i_j \leq h_j \} \right| =
    \sum_{i_1 = l_1}^{h_1} \sum_{i_2 = l_2}^{h_2} \cdots \sum_{i_k = l_k}^{h_k} 1.
    \]
    For example, the format expression \(x = a i + j\) where \(0 \leq i < m\) and \(1 \leq j \leq a\) defines a set 
    of size \(m a\). If some variables are symbolic (e.g., \( a \)), the size remains symbolic
    and can be simplified using algebraic rules. For example, \(\sum_{i=0}^{a} \sum_{j=1}^{i} 1 = a(a + 1)/2\).

    \item \textbf{Symbolic floor simplification}:
    Let \( a \) be a symbolic variable representing a positive integer, and consider the rational expression \( f(a) = {p(a)}/{q(a)} \), where \( p \) and~\( q \) are polynomials in \( a \).
    We aim to simplify \(\left\lfloor {p(a)}/{q(a)} \right\rfloor\)
    which can often be simplified using the degrees of the numerator and denominator polynomials.
    
    Consider \(\deg(p) < \deg(q)\).
    When the degree of the numerator \( p(a) \) is strictly less than that of the denominator \( q(a) \), the value of the rational expression
    \({p(a)}/{q(a)}\) has magnitude less than 1 for all sufficiently large \( a \), and is dominated by the leading terms of \( p \) and \( q \).    
    Its sign for large enough $a$ is therefore determined by the sign of
    \( \operatorname{LC}(p)/\operatorname{LC}(q)\)
    where \( \operatorname{LC}(\cdot) \) denotes the leading coefficient, so
    \[
    \left\lfloor \frac{p(a)}{q(a)} \right\rfloor =
    \begin{cases}
    0 & \text{if } {\mathrm{LC}(p)}/{\mathrm{LC}(q)} > 0, \\
    -1 & \text{if } {\mathrm{LC}(p)}/{\mathrm{LC}(q)} < 0,
    \end{cases}
    \]
    assuming that the bound on $a$ is large enough for this simplification to occur.
    For example, \(\left\lfloor -{1}/{a} \right\rfloor = -1\) if $a\geq1$, \(\left\lfloor {1}/{a} \right\rfloor = 0\),
    \(\left\lfloor (a+1)/a \right\rfloor = 1\), and \(\left\lfloor (a^2 + 1)/{a^3} \right\rfloor = 0\) if $a\geq2$, and
    \(\left\lfloor -(a + 5)/{a^2} \right\rfloor = -1\) if $a\geq3$. 
    
    \item \textbf{Unified symbolic simplification}:
        Applies \texttt{simplify} and \texttt{expand} to normalize symbolic size expressions such as converting
        $2a(a+1) + a(a^2 + 3a + 1) + 4a$ to $a^3 + 5a^2 + 7a$.
}

\subsubsection{Symbolic disjointness checking using \SymPy\ and \ZT}\label{sec:symbolicdisjoint}

\AC\ constructs a symbolic system of constraints to determine whether two symbolic sets are disjoint.
The key idea is to introduce a symbolic integer variable \( z \) that hypothetically belongs to both sets.
Then, disjointness reduces to checking whether the conjunction of the constraints defining both sets is unsatisfiable.
\texttt{Z3} is used to test satisfiability.

\myitemize{11pt}{
    \item \textbf{Domain constraints (bounds)}:
    For each interval, we require \( z \in [\texttt{lower}, \texttt{upper}] \), encoded via \texttt{Ge(z, lower)} and \texttt{Le(z, upper)}.
    If sets $A$ and $B$ are defined over \( [1, a^2 + 2a] \) and \( [a^2 + 3a + 1, a^3 + 5a^2 + 7a] \), respectively, then
        \[
        z \geq 1 \land z \leq a^2 + 2a \quad \text{and} \quad z \geq a^2 + 3a + 1 \land z \leq a^3 + 5a^2 + 7a .
        \]
        These bounds are incompatible, so $A$ and $B$ are disjoint.

    \item \textbf{Divisibility and non-divisibility constraints}:
    Divisibility constraints are encoded as \( d \mid z \) using \(\texttt{Mod}(z, d) == 0\), 
    and non-divisibility as \( d \nmid z \) using \(\texttt{Mod}(z, d) \neq 0\).  
    Suppose set $A$ contains all \( z \in [1, ab] \) such that \( b \mid z \), 
    while set $B$ contains those where \( b \nmid z \). 
    The constraint system \( z \equiv 0 \mod b \land z \not\equiv 0 \mod b \) 
    is unsatisfiable, so the sets are disjoint.

    \item \textbf{Format expression constraints}:
    To avoid symbol collisions in format expressions, 
    all format variables are renamed using fresh symbols (e.g., \texttt{\_fmt\_i}).
    If a set is defined by a format expression \( z = f(i_1, \dots, i_k) \), 
    then we add \texttt{sp.Eq(z, format\_expr)} along with symbolic bounds on the variables \( i_j \)
    (e.g., \( 0 \leq i < a^2 \)).

    For instance, consider the set \(A\) defined by
      \[
        z = a(a+1)i + aj + k \quad \text{where}\quad 0 \leq i < a^2, \quad 0 \leq j \leq a, \quad \text{and} \quad 2\lfloor j/2 \rfloor + 1 \leq k \leq a - 1,
      \] with constraint \(z\not\equiv{0}\pmod{a}\).  Also, let the set \(B\) be defined by
        \[
        z = t(a^3 + a^2), \quad 1 \leq t \leq \lfloor (a - 1)/2 \rfloor.
        \]
      Now, assume \( z \in A \cap B \). Then $z\in B$ implies \(z\equiv{0}\pmod{a}\), 
      and this contradicts the constraint from \(A\). Therefore, \(A\cap B=\emptyset\).  
  
}

\subsubsection{Monochromatic solution analysis using \ZT}\label{sec:analyzer}
For a linear equation $\sum_{i=1}^{m-1}a_ix_i=a_mx_m$ and $t\geq 1$ sets (each given a colour class), the tool now
generates all possible ways to assign the \(m-1\) left-hand-side (LHS) variables across \(t\) sets using Cartesian products.
	\AC\ iterates over every way to assign the LHS variables to the same colour class
	and for every colour class
  it generates a list of cases for verification.

The prover systematically explores all combinations of sets assigned to the variables in the equation, 
where each variable is taken from the same colour class. For each such case, it verifies 
that no solution exists satisfying the equation under the imposed divisibility, format, and value constraints. 
If all such cases succeed this confirms the absence of monochromatic solutions.

\myitemize{11pt}{
  \item For each assignment of sets to the variables in the equation, 
  the tool aggregates all associated symbolic constraints: interval bounds (e.g., \(x \in [1, a^3]\)), format constraints (e.g., \(x = i a^2 + j a\)), and arithmetic filters (e.g., \(y \nmid a\), \(z \mid a^2\)).

  \item It then encodes the target equation (e.g., \(a x + a y = (a+1) z\)) and all constraints into an SMT expression, along with assumptions (e.g., \(a \geq 7\), \(\texttt{prime}(a)\), \(\gcd(a, b) = 1\)).
  The translation process supports a broad range of symbolic constructs in \SymPy, including integer inequalities, polynomial equalities, floor and ceiling operations, divisibility conditions, and logical connectives.
  
  \item The solver checks that the symbolic system is unsatisfiable.
  The contradiction analysis is exhaustive—it verifies that under all parameter values satisfying the assumptions, the given assignment cannot yield a valid solution.

}
If every such assignment leads to a contradiction, the tool concludes that the equation has no monochromatic solution in the constructed colouring, and thus the lower bound for the Rado number is symbolically proven.

\begin{example}\label{example:z3}
We illustrate a symbolic case analyzed by AutoCase, where each variable in the equation \( a x_1 + a x_2 = (a+1) x_3 \) is assigned to distinct symbolic intervals with bounds, divisibility filters, and format expressions. The analyzer constructs the following constraint system:
\myitemize{11pt}{
  \item \textbf{Global Assumptions}: \( a \in \mathbb{Z} \) and $a\geq1$
  \item \textbf{Equation Constraint}: \( a x_1 + a x_2 = (a + 1) x_3 \)
  \item \textbf{Bounds}: \( x_1, x_2, x_3 \in [1, a^2] \quad \Rightarrow \quad x_i\geq 1, x_i\leq a^2\) for\/ \(i\in\{1,2,3\}\)
  \item \textbf{Divisibility Filter} on \(x_1\): \( a \mid x_1 \quad \Rightarrow \quad x_1 = a \cdot k_1,\quad k_1 > 0 \)
  \item \textbf{Non-divisibility Filter} on \(x_2\): \( a^2 \nmid x_2 \quad \Rightarrow \quad x_2 \neq a^2 \cdot k_2 \; \text{ for all }\; k_2 > 0 \)
  \item \textbf{Format Expression} for \(x_3\): \[ x_3 = a(a+1)i + aj + k \quad \text{with} \quad 0 \leq i < a^2, 0 \leq j \leq a, \text{ and }\; 2\lfloor {j}/{2} \rfloor + 1 \leq k \leq a - 1 \]
  \item \textbf{Total Constraint Set} (passed to \ZT):
  \[
  \begin{aligned}
    & a x_1 + a x_2 = (a+1) x_3 \\
    & x_1 = a \cdot k_1,\quad k_1 > 0 \\
    & \forall{k_2 > 0}, x_2 \neq a^2 \cdot k_2 \\
    & x_3 = a(a+1)i + aj + k \\
    & 0 \leq i < a^2,\quad 0 \leq j \leq a,\quad 2\lfloor {j}/{2} \rfloor + 1 \leq k \leq a - 1 \\
    & x_1, x_2, x_3 \in [1, a^2]
  \end{aligned}
  \]
  }
The set of constraints in \ZT\ are given in Appendix \ref{appendix:z3}\@.
If \ZT\ determines that this system is unsatisfiable for all admissible values of \(a\), then this configuration leads to a contradiction—contributing to the overall proof that the equation has no monochromatic solution in the constructed colouring.
\end{example}
 
\subsection{Proving with \AC: An example}\label{sec:ac-example}
A general technique to prove a lower bound is to construct a colouring avoiding forbidden patterns. Specifically, to show $R_k({\cal E})\geq N+1$, constructing a $k$-colouring of the
positive integers $[1,N]$ avoiding any monochromatic solution to equation ${\cal E}$ would be sufficient.
As an example, Proposition~\ref{prop:r3a11} states $R_3({\cal E}(3, 0; a, 1, 1))=a^3+5a^2+7a+1$.
Although we already showed this proposition is a consequence of a theorem of
Chang, De Loera, and Wesley~\cite{chang2022}, as an example we now show how a form of Proposition~\ref{prop:r3a11}
(with the equality replaced by a lower bound) can be proven using \AC.

\subsubsection{Construction of the partition}
Based on empirical observations of solutions to small instances of the problem, using certificates (satisfying assignments) generated by the SAT solver CaDiCaL, we identify a partition of $[1, a^3+5a^2+7a]$ into seven intervals $P_0$, $\dotsc$, $P_6$ along with a corresponding colouring function~$\delta$.

Consider the partition of $[1, a^3+5a^2+7a]$ using the intervals
\begin{align*}
P_0 &= [1,a], \\
P_1 &= [a+1, a^2+2a], \\
P_2 &= [a^2+2a+1, a^2+3a], \\
P_3 &= [a^2+3a+1, a^3+4a^2+4a], \\
P_4 &= [a^3+4a^2+4a+1, a^3+4a^2+5a], \\
P_5 &= [a^3+4a^2+5a+1, a^3+5a^2+6a], \\
P_6 &= [a^3+5a^2+6a+1, a^3+5a^2+7a].
\end{align*}
From these, we define the colouring function $\delta\colon [1, a^3+5a^2+7a] \rightarrow [0,1,2]$ as
\[
\delta(i) = 
	\begin{cases}
		0 & \text{if $i\in P_0 \cup P_2 \cup P_4 \cup P_6$,} \\
		1 & \text{if $i\in P_1 \cup P_5$,} \\
		2 & \text{if $i\in P_3$.} \\
	\end{cases}
\]

\subsubsection{Input processing}
The prover considers all possible pairs $x$, $y$ drawn from the same colour class, and verifies that \(z=ax+y\) 
cannot belong to any set in the same colour class.
Based on the symbolic colouring provided by $\delta$, the prover enumerates all ways of assigning $x$, $y$, and $z$ to the same colour. Specifically, it generates $4^3 = 64$ cases for colour $0$, $2^3 = 8$ cases for colour $1$, and $1^3=1$ case for colour $2$, yielding a total of $73$ cases. These cases correspond to the combinations of source intervals for $(x, y)$ and target intervals for $z$.

\subsubsection{Monochromaticity analysis}
\begin{example}\label{example:colour-0-constraints}
We check the cases where \( x \in P_0 \) and \( y \in P_0 \).
The computed \( z = ax + y \) must not belong to any of \( P_0, P_2, P_4, P_6 \).
The reasons why \( z \) cannot belong to any of \( P_0, P_2, P_4, P_6 \) are as follows:

\myitemize{11pt}{
\item Lower bound of \( z \):  Since \( x, y \in P_0 = [1, a] \), their smallest possible values are \( x = 1 \) and \( y = 1 \). This gives \( z = ax + y \geq a(1) + 1 = a + 1 \).
\item Upper bound of \( z \):  The largest values in \( P_0 \) are \( x = a \) and \( y = a \). This gives \( z = ax + y \leq a(a) + a = a^2 + a \).
\item Now, we check whether \( z \) can belong to any forbidden partition:
	\myitemize{11pt}{
		\item \( P_0 = [1, a] \): Since \( z \geq a+1 \), \( z \notin P_0 \).
		\item \( P_2 = [a^2 + 2a + 1, a^2 + 3a] \):  The maximum possible \( z \) is \( a^2 + a \), which is less than \( \min(P_2) = a^2 + 2a + 1 \).  Thus, \( z \notin P_2 \).
		\item \( P_4 = [a^3 + 4a^2 + 4a + 1, a^3 + 4a^2 + 5a] \):  Clearly, \( z \leq a^2 + a \) is much smaller than \( \min(P_4) \), so \( z \notin P_4 \).
		\item \( P_6 = [a^3 + 5a^2 + 6a + 1, a^3 + 5a^2 + 7a] \):  Again, \( z \) is far smaller than \( \min(P_6) \), so \( z \notin P_6 \).

	}
}
Since \(z\) is forced outside the monochromatic partitions, we conclude that no monochromatic solution exists in colour 0 when \( x \in P_0 \) and \( y \in P_0 \).
\end{example} 
The same type of analysis applies to all other colour cases, ensuring that the generated constraints exhaustively eliminate any monochromatic solution.
As a result, \AC\ has shown that $R_3({\cal E}(3, 0; a, 1, 1))\geq a^3+5a^2+7a+1$ for all $a\geq1$.

\subsection{Proof of Theorem \ref{th:lb-abb} (lower bound for $R_3({\cal E}(3, 0; a, b, b)))$}\label{sec:thm1proof}

\subsubsection{Construction of the partition}
  Since $\gcd(a,b)=1$, the variable $z$ to be an integer in the equation $ax+by=bz$, the variable $x$ must be a multiple of $b$.
  Consider the partition of $[1, a^3+a^2+(2b+1)a]$ using the intervals
  \begin{align*}
  P_0 &= [1,ba], \\
  P_1 &= [ba+1, b^2a], \\
  P_2 &= [b^2a+1, ba^2+ba], \\
  P_3 &= [ba^2+ba+1, a^3+a^2+(b+1)a], \text{ and} \\
  P_4 &= [a^3+a^2+(b+1)a+1, a^3+a^2+(2b+1)a].
  \end{align*}
  Note that indeed $[1,a^3+a^2+(2b+1)a]=P_0\,\cup\,P_1\,\cup\,P_2\,\cup\,P_3\,\cup\,P_4$ and
  $P_i\cap P_j=\emptyset$ for $i\neq j$. Define the following sets to
  filter the above intervals for colouring in Dark Gray (0), Red (1), and Blue~(2):
  \begin{align*}
  R_1 &= \set{v \in P_0: v\not\equiv 0\pmod{b}} \\
  R_2 &= \set{v \in P_0\cup P_1: v\equiv 0\pmod{b^2}}\\
  R_3 &= P_4 \\
  B_1 &= \set{v \in P_0\cup P_1 : \begin{array}{l}
              v\equiv 0\pmod{b}, \\ 
                                                          v\not\equiv 0\pmod{b^2}
            \end{array}}\\
  B_2 &= \set{v \in P_2: v\equiv 0\pmod{b}}\\
  D_1 &= \set{v \in P_1\cup P_2: v\not\equiv 0\pmod{b}}\\
  D_2 &= P_3
  \end{align*}
  Now, consider the colouring $\delta\colon [1, a^3+a^2+(2b+1)a] \rightarrow [0,1,2]$ defined by
  \begin{align*}
  \delta(i) &= 
    \begin{cases}
    0 & \text{if $i\in D_1 \cup D_2$,} \\ 
    1 & \text{if $i\in R_1 \cup R_2 \cup R_3$,} \\
    2 & \text{if $i\in B_1 \cup B_2$.} \\
    \end{cases}
  \end{align*}
  Example~\ref{example:abb} provides a visual representation of this colouring.
  \begin{example}\label{example:abb}
    Below is the colouring of $[1,108]$ used to prove that $R_3({\cal E}(3,0;4,3,3))\geq 109$.
    \begin{center}
    \small
    \begin{tabular}{ccccc}
    \begin{tabular}{|c|c|c|}
    \hline
    \color{red} r1 & \color{red} r2 & \color{blue} b3 \\ \hline
    \color{red} r4 & \color{red} r5 & \color{blue} b6 \\ \hline
    \color{red} r7 & \color{red} r8 & \color{red} r9 \\ \hline
    \color{red} r10 & \color{red} r11 & \color{blue} b12 \\ \hline
    \multicolumn{3}{|c|}{$P_0$} \\ \hline
    \end{tabular}
    &
    \begin{tabular}{|c|c|c|}
    \hline
    13 & 14 & \color{blue} b15 \\ \hline
    16 & 17 & \color{red} r18 \\ \hline
    19 & 20 & \color{blue} b21 \\ \hline
    22 & 23 & \color{blue} b24 \\ \hline
    25 & 26 & \color{red} r27 \\ \hline
    28 & 29 & \color{blue} b30 \\ \hline
    31 & 32 & \color{blue} b33 \\ \hline
    34 & 35 & \color{red} r36 \\ \hline
    \multicolumn{3}{|c|}{$P_1$} \\ \hline
    \end{tabular}
    &
    \begin{tabular}{|c|c|c|}
    \hline
    37 & 38 & \color{blue} b39 \\ \hline
    40 & 41 & \color{blue} b42 \\ \hline
    43 & 44 & \color{blue} b45 \\ \hline
    46 & 47 & \color{blue} b48 \\ \hline
    49 & 50 & \color{blue} b51 \\ \hline
    52 & 53 & \color{blue} b54 \\ \hline
    55 & 56 & \color{blue} b57 \\ \hline
    58 & 59 & \color{blue} b60 \\ \hline
    \multicolumn{3}{|c|}{$P_2$} \\ \hline
    \end{tabular}

    &
    \begin{tabular}{|c|c|c|}
    \hline
    61 & 62 & 63 \\ \hline
    64 & 65 & 66 \\ \hline
    67 & 68 & 69 \\ \hline
    70 & 71 & 72 \\ \hline
    73 & 74 & 75 \\ \hline
    76 & 77 & 78 \\ \hline
    79 & 80 & 81 \\ \hline
    82 & 83 & 84 \\ \hline
    85 & 86 & 87 \\ \hline
    88 & 89 & 90 \\ \hline
    91 & 92 & 93 \\ \hline
    94 & 95 & 96 \\ \hline
    \multicolumn{3}{|c|}{$P_3$} \\ \hline
    \end{tabular}
    & 
    \begin{tabular}{|c|c|c|}
    \hline
    \color{red} r97 & \color{red} r98 & \color{red} r99 \\ \hline
    \color{red} r100 & \color{red} r101 & \color{red} r102 \\ \hline
    \color{red} r103 & \color{red} r104 & \color{red} r105 \\ \hline
    \color{red} r106 & \color{red} r107 & \color{red} r108 \\ \hline
    \multicolumn{3}{|c|}{$P_4$} \\ \hline
    \end{tabular}
    \end{tabular}
    \normalsize
    \end{center}

    \end{example}

\begin{proof}[Proof of Theorem \ref{th:lb-abb}.]
  A written proof of Theorem~\ref{th:lb-abb} is provided in Appendix \ref{pr:lb-abb}. The symbolic correctness of the partition and the verification of the cases can be found in the \AC\ repository.
\end{proof}
  
  \subsection{Proof of Theorem \ref{th:lb-aaap1} (lower bound for $R_3({\cal E}(3, 0; a, a, a+1))$)}\label{sec:thm2proof}
  
  \begin{observation}\label{obs:example-grids}
  For odd positive integers $a\geq 7$, 
  there exists a certificate (computed using a SAT solver) for ${R_3}({\cal E}(3, 0; a, a, a+1)) \geq a^3(a+1)$ of length $a^3(a+1)-1$
  where certain repetitive patterns appear as blocks of coloured integers of size $a(a+1)$.
  Consider the following examples for ${R_3}({\cal E}(3, 0; 7, 7, 8))$.
  \begin{center}
  \scriptsize
  \begin{tabular}{cc}
  \begin{tabular}{|c|c|c|c|c|c|c|}
  \hline
  \color{red} r1& \color{red} r2 & \color{red} r3 & \color{red} r4 & \color{red} r5 & \color{red} r6 & 7 \\ \hline
  \color{red} r8& \color{red} r9 & \color{red} r10 & \color{red} r11 & \color{red} r12 & \color{red} r13 & 14 \\ \hline
  \color{blue} b15 & \color{blue} b16 & \color{red} r17 & \color{red} r18 & \color{red} r19 & \color{red} r20 & 21 \\ \hline
  \color{blue} b22 & \color{blue} b23 & \color{red} r24 & \color{red} r25 & \color{red} r26 & \color{red} r27 & 28 \\ \hline
  \color{blue} b29 & \color{blue} b30 & \color{blue} b31 & \color{blue} b32 & \color{red} r33 & \color{red} r34 & 35 \\ \hline
  \color{blue} b36 & \color{blue} b37 & \color{blue} b38 & \color{blue} b39 & \color{red} r40 & \color{red} r41 & 42 \\ \hline
  \color{blue} b43 & \color{blue} b44 & \color{blue} b45 & \color{blue} b46 & \color{blue} b47 & \color{blue} b48 & \color{red} r49 \\ \hline
  \color{blue} b50 & \color{blue} b51 & \color{blue} b52 & \color{blue} b53 & \color{blue} b54 & \color{blue} b55 & 56 \\ \hline
  \multicolumn{7}{|c|}{Colour block 1 for $R_3({\cal E}(3,0; 7,7,8))$} \\ \hline
  \end{tabular}
  &
  \begin{tabular}{|c|c|c|c|c|c|c|}
  \hline
  \color{red} r57 & \color{red} r58 & \color{red} r59 & \color{red} r60 & \color{red} r61 & \color{red} r62 & 63 \\ \hline
  
  \color{red} r64 & \color{red} r65 & \color{red} r66 & \color{red} r67 & \color{red} r68 & \color{red} r69 & 70 \\ \hline
  
  \color{blue} b71 & \color{blue} b72 & \color{red} r73 & \color{red} r74 & \color{red} r75 & \color{red} r76 & 77\\ \hline
  
  \color{blue} b78 & \color{blue} b79 & \color{red} r80 & \color{red} r81 & \color{red} r82 & \color{red} r83 & 84\\ \hline
  
  \color{blue} b85 & \color{blue} b86 & \color{blue} b87 & \color{blue} b88 & \color{red} r89 & \color{red} r90 & 91\\ \hline
  
  \color{blue} b92 & \color{blue} b93 & \color{blue} b94 & \color{blue} b95 & \color{red} r96 & \color{red} r97 & \color{red} r98\\ \hline
  
  \color{blue} b99 & \color{blue} b100 & \color{blue} b101 & \color{blue} b102 & \color{blue} b103 & \color{blue} b104 & 105\\ \hline
  
  \color{blue} b106 & \color{blue} b107 & \color{blue} b108 & \color{blue} b109 & \color{blue} b110 & \color{blue} b111 & 112 \\ \hline
  \multicolumn{7}{|c|}{Colour-block 2 for $R_3({\cal E}(3,0; 7,7,8))$} \\ \hline
  \end{tabular}
  \\
  \\
  \begin{tabular}{|c|c|c|c|c|c|c|}
    \hline
    \color{red} r337 & \color{red} r338 & \color{red} r339 & \color{red} r340 & \color{red} r341 & \color{red} r342 & \color{blue} b343 \\ \hline
    
    \color{red} r344 & \color{red} r345 & \color{red} r346 & \color{red} r347 & \color{red} r348 & \color{red} r349 & 350 \\ \hline
    
    \color{blue} b351 & \color{blue} b352 & \color{red} r353 & \color{red} r354 & \color{red} r355 & \color{red} r356 & 357\\ \hline
    
    \color{blue} b358 & \color{blue} b359 & \color{red} r360 & \color{red} r361 & \color{red} r362 & \color{red} r363 & 364\\ \hline
    
    \color{blue} b365 & \color{blue} b366 & \color{blue} b367 & \color{blue} b368 & \color{red} r369 & \color{red} r370 & 371\\ \hline
    
    \color{blue} b372 & \color{blue} b373 & \color{blue} b374 & \color{blue} b375 & \color{red} r376 & \color{red} r377 & 378\\ \hline
    
    \color{blue} b379 & \color{blue} b380 & \color{blue} b381 & \color{blue} b382 & \color{blue} b383 & \color{blue} b384 & 385\\ \hline
    
    \color{blue} b386 & \color{blue} b387 & \color{blue} b388 & \color{blue} b389 & \color{blue} b390 & \color{blue} b391 & \color{red} r392 \\ \hline
    \multicolumn{7}{|c|}{Colour-block 7 for $R_3({\cal E}(3,0; 7,7,8))$} \\ \hline
    \end{tabular}
  & 
  \begin{tabular}{|c|c|c|c|c|c|c|}
    \hline
    \color{red} r2689 & \color{red} r2690 & \color{red} r2691 & \color{red} r2692 & \color{red} r2693 & \color{red} r2694 & \color{blue} b2695 \\ \hline
    
    \color{red} r2696 & \color{red} r2697 & \color{red} r2698 & \color{red} r2699 & \color{red} r2700 & \color{red} r2701 & 2702 \\ \hline
    
    \color{blue} b2703 & \color{blue} b2704 & \color{red} r2705 & \color{red} r2706 & \color{red} r2707 & \color{red} r2708 & 2709\\ \hline
    
    \color{blue} b2710 & \color{blue} b2711 & \color{red} r2712 & \color{red} r2713 & \color{red} r2714 & \color{red} r2715 & 2716\\ \hline
    
    \color{blue} b2717 & \color{blue} b2718 & \color{blue} b2719 & \color{blue} b2720 & \color{red} r2721 & \color{red} r2722 & 2723\\ \hline
    
    \color{blue} b2724 & \color{blue} b2725 & \color{blue} b2726 & \color{blue} b2727 & \color{red} r2728 & \color{red} r2729 & 2730\\ \hline
    
    \color{blue} b2731 & \color{blue} b2732 & \color{blue} b2733 & \color{blue} b2734 & \color{blue} b2735 & \color{blue} b2736 & 2737\\ \hline
    
    \color{blue} b2738 & \color{blue} b2739 & \color{blue} b2740 & \color{blue} b2741 & \color{blue} b2742 & \color{blue} b2743 &  \\ \hline
    \multicolumn{7}{|c|}{Colour-block 49 for $R_3({\cal E}(3,0; 7,7,8))$} \\ \hline
    \end{tabular}
  
  \end{tabular}
  \normalsize
  \end{center}
  The integers that are not divisible by\/ $a$ get coloured in a consistent way in each block, while
  the integers that are divisible by\/ $a$ need to be carefully coloured in such a way (a different way in each block)
  to avoid monochromatic solutions to the equation\/ $ax+ay=(a+1)z$.
  \end{observation}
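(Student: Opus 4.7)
The plan is to construct an explicit 3-colouring of $[1, a^3(a+1)-1]$ inspired by the block structure displayed in Observation~\ref{obs:example-grids}, then invoke \AC\ to symbolically verify that (i) the chosen colour classes partition the interval and (ii) no monochromatic solution to $ax+ay=(a+1)z$ exists. Once both are checked, the colouring witnesses $R_3({\cal E}(3,0;a,a,a+1))>a^3(a+1)-1$, giving the desired bound.

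First I would view $[1,a^3(a+1)-1]$ as $a^2$ consecutive blocks of length $a(a+1)$, addressing each element by the format $x=a(a+1)i+aj+k$ with $0\le i\le a^2-1$, $0\le j\le a$, and $0\le k\le a-1$ (with a small adjustment at the very last element, which is missing from block $a^2-1$). The integers not divisible by $a$ (those with $k\ge 1$) receive a block-independent colouring: reading the sample blocks in Observation~\ref{obs:example-grids}, one sees a triangular pattern in which Red occupies the cells with $k$ in a range determined by $\lfloor j/2\rfloor$ and Blue occupies the complementary cells, while the last column $k=0$ (the multiples of $a$) is treated separately. The multiples of $a$ inside each block, namely the $a^2$ numbers of the form $a(a+1)i+aj$, receive a colouring that depends on the block index $i$, as made evident by comparing blocks 1, 2, 7, and 49 in the observation; this dependence is exactly what forces the need for $a^2$ distinct blocks, matching the factor $a^3(a+1)=a\cdot a^2\cdot (a+1)$.

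Next I would encode each colour class as a union of symbolically-defined sets in \SymPy, each specified by a format expression together with symbolic index bounds and divisibility filters such as ``divisible by $a$ but not by $a^2$.'' I would then hand these sets to \AC, which performs three independent symbolic tasks: it computes the total size and checks that it equals $a^3(a+1)-1$ using the inclusion--exclusion machinery of Section~\ref{sec:symbolicsize}; it checks pairwise disjointness via the \ZT-based procedure of Section~\ref{sec:symbolicdisjoint}; and finally, for every triple of sets drawn from one colour class, it passes the equation $ax+ay=(a+1)z$ along with the assumptions $a\ge 7$ and $a$ odd to \ZT\ and verifies unsatisfiability. Together these discharge the claim.

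The main obstacle will be giving a sufficiently compact symbolic description of the colouring of the multiples of $a$, which vary from block to block. A natural route is to further decompose the index $i\in[0,a^2-1]$ as $i=as+t$ with $s,t\in[0,a-1]$ and use $s$ and $t$ to drive the position inside the block at which Red and Blue switch, yielding a handful of format expressions rather than one per block. The other delicate point is that the interaction between multiples and non-multiples of $a$ in the equation $ax+ay=(a+1)z$ creates many mixed cases for \ZT; ensuring that each case is expressible with the divisibility and format constraints supported by \AC, and that \ZT\ closes each case under only the assumptions $a\ge7$ and $a$ odd, is where the real work lies. Once this symbolic colouring is in place the verification itself, as in Section~\ref{sec:thm1proof}, is mechanical.
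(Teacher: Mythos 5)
Your overall strategy---read a block-structured colouring off the SAT certificates, encode each colour class as symbolic sets with format expressions and divisibility filters, and let \AC\ verify the partition, disjointness, and absence of monochromatic solutions---is exactly the route the paper takes (Section~\ref{sec:thm2proof}, verified in Appendix~\ref{pr:lb-aaap1}), and your reading of the pattern on the integers not divisible by $a$ matches the paper's sets $R_{\ell}$ and $B_{\ell}$ (red where $k$ exceeds a threshold determined by $2\lfloor j/2\rfloor$, blue below it). So on the non-multiples of $a$ you and the paper coincide.

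The genuine gap is in the one place where all the difficulty lives: the multiples of $a$. You leave their colouring unspecified, flag it as ``the main obstacle,'' and sketch a parametrization ($i=as+t$, with $s,t$ driving a within-block position at which red and blue switch) that does not reflect the structure that actually works, and you never say where the third colour is used at all. In the paper's construction the third colour is not ad hoc: it is exactly the set $S_a(N)\setminus S_{a^2}(N)$ of integers divisible by $a$ but not by $a^2$, and this choice makes colour $0$ solution-free for free by the integrality argument (Lemma~\ref{integrality}(b)). Only the roughly $a(a+1)$ multiples of $a^2$ then require the delicate, block-varying red/blue assignment, and that assignment is not a ``switch position inside a block'': each multiple of $a^2$ is written as $ia^3+ja^2$ (or $a^4+ia^2$) and assigned by comparing $j$ with $i$ (below the diagonal blue, above it red, with the diagonal itself split at $i=(a\pm1)/2$), giving the sets $B_r$ and $R_r$. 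Without identifying this two-level split (colour $0$ on $a\mid x$, $a^2\nmid x$; explicit $R_r/B_r$ on $a^2\mid x$), your plan has no concrete colouring to hand to \AC, so the verification you call mechanical cannot start, and it is not evident that a colouring built on your $i=as+t$ scheme would avoid monochromatic solutions at all. (A small slip as well: each block of length $a(a+1)$ contains $a+1$ multiples of $a$, not $a^2$ of them.)
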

  
  \subsubsection{Construction of the partition}
  Before we prove the lower bound, we construct a partition of the set $[1,N]$ 
  where $N = a^3(a+1)-1$ with $a\geq 7$ being an odd positive integer. 
  Consider, 
  \[[1,N+1] = \set{i\cdot a(a+1)+a\cdot j+k: 0\leq i\leq a^2-1, 0\leq j\leq a, 1\leq k \leq a}.\]
Let \(S_{d}(N)\) and \(\overline{S_d}(N)\) denote \(\set{x \in [1,N]: d \mid x}\) and \(\set{x \in [1,N]: d \nmid x}\), respectively.
We have the partition $[1,N] = S_a(N)\cup \overline{S_a}(N)$ with
\begin{align*}
S_a(N) &= \set{i\cdot a(a+1)+a\cdot j+a: 0\leq i\leq a^2-1, 0\leq j\leq a} \setminus \set{N+1}\text{, and} \\
\overline{S_a}(N) &= \set{i\cdot a(a+1)+a\cdot j+k: 0\leq i\leq a^2-1, 0\leq j\leq a, 1\leq k\leq a-1}.
\end{align*}
Let \(\overline{S_a}(N) = R_{\ell} \cup B_{\ell}\) be a partition defined by
\begin{align*}
R_{\ell} &= \set{i\cdot a(a+1)+a\cdot j+k: 0\leq i\leq a^2-1, 0\leq j\leq a, 2\lfloor{j/2}\rfloor+1\leq k\leq a-1}\text{, and}\\
B_{\ell} &= \set{i\cdot a(a+1)+a\cdot j+k: 0\leq i\leq a^2-1, 0\leq j\leq a, 1\leq k\leq 2\lfloor{j/2}\rfloor}.
\end{align*}
Now, consider the set
\[S_{a^2}(N) = \set{i\cdot a(a+1)+a\cdot j+a: 0\leq i\leq a^2-1, 0\leq j\leq a, a \mid (i+j+1)} \setminus \set{N+1} . \]
Let \(S_{a^2}(N) = R_{r} \cup B_{r}\) be a partition defined by
\begin{align*}
B_r &= \bigcup_{i=1}^{a-1}\set{a^4+i a^2} \cup  
\bigcup_{i=1}^{a-1}\set{i a^3+j a^2: 0 \leq j\leq i-1} \cup
\bigcup_{i=(a+1)/2}^{a-1}\set{i a^3+i a^2}\text{, and} \\
R_r &= \set{a^4} \cup 
\bigcup_{i=0}^{a-1}\set{i a^3+j a^2: i+1 \leq j\leq a-1} \cup
\bigcup_{i=1}^{(a-1)/2}\set{i a^3+i a^2} .
\end{align*}
Our colouring $\delta\colon [1, N] \rightarrow [0,1,2]$ will be defined by
\[
  \delta(x) = 
    \begin{cases}
    2 & \text{if $x\in B_{\ell} \cup B_r$,}\\
    1 & \text{if $x\in R_{\ell} \cup R_r$,}\\
    0 & \text{if $x\in S_{a}(N) \setminus S_{a^2}(N)$.}\\
    \end{cases}
\]

  \begin{proof}[Proof of Theorem \ref{th:lb-aaap1}.]
    A written proof of Theorem~\ref{th:lb-aaap1} is provided in Appendix \ref{pr:lb-aaap1}.
    The symbolic correctness of the partition and the verification of the cases can be found in the \AC\ repository.
  \end{proof}

\section{Conclusion}
  
In this work, we explored the computation of 3-colour Rado numbers for certain 3-term linear equations. Our SAT-based approach enabled us to extend the known values of \( R_3({\cal E}(3,0;a,b,b)) \) and \( R_3({\cal E}(3,0;a,a,b)) \), computing previously unknown Rado numbers and formulating new conjectures based on observed patterns. 
We proved the lower bounds \( R_3({\cal E}(3,0;a,b,b)) \geq a^3 + a^2 + (2b+1)a + 1 \) and \( R_3({\cal E}(3,0;a,a,a+1)) \geq a^3(a+1) \) for certain ranges of $a$ and $b$. The case-based proofs were automatically verified using a combination of symbolic computation and SMT solvers in a tool we call \AC.

We did not attempt to compute Rado numbers involving more than three colours due to the rapid growth in the SAT instances, and for simplicity we focused on linear equations with a constant term of zero.  In principle, our methods should be applicable to nonhomogeneous linear equations and to Rado numbers involving four or more colours, though they may require more computational resources or more advanced combinatorial optimization techniques.
We leave such cases for future work.

\bibliography{references}

\begin{thebibliography}{26}
\expandafter\ifx\csname natexlab\endcsname\relax\def\natexlab#1{#1}\fi
\providecommand{\url}[1]{\texttt{#1}}
\providecommand{\href}[2]{#2}
\providecommand{\path}[1]{#1}
\providecommand{\DOIprefix}{doi:}
\providecommand{\ArXivprefix}{arXiv:}
\providecommand{\URLprefix}{URL: }
\providecommand{\Pubmedprefix}{pmid:}
\providecommand{\doi}[1]{\href{http://dx.doi.org/#1}{\path{#1}}}
\providecommand{\Pubmed}[1]{\href{pmid:#1}{\path{#1}}}
\providecommand{\bibinfo}[2]{#2}
\ifx\xfnm\relax \def\xfnm[#1]{\unskip,\space#1}\fi
\bibitem[{Rado(1933)}]{rado1933}
\bibinfo{author}{R.~Rado},
\newblock \bibinfo{title}{Studien zur {Kombinatorik}},
\newblock \bibinfo{journal}{Mathematische Zeitschrift} \bibinfo{volume}{36}
  (\bibinfo{year}{1933}) \bibinfo{pages}{242--280}.
  \DOIprefix\doi{10.1007/BF01188632}.
\bibitem[{Schaal(1995)}]{schaal1995}
\bibinfo{author}{D.~Schaal},
\newblock \bibinfo{title}{A family of 3-color {Rado} numbers},
\newblock \bibinfo{journal}{Congressus Numerantium} \bibinfo{volume}{111}
  (\bibinfo{year}{1995}) \bibinfo{pages}{150--160}.
\bibitem[{Adhikari et~al.(2016)Adhikari, Boza, Eliahou, Marin, Revuelta, and
  Sanz}]{adhikari2015}
\bibinfo{author}{S.~D. Adhikari}, \bibinfo{author}{L.~Boza},
  \bibinfo{author}{S.~Eliahou}, \bibinfo{author}{J.~M. Marin},
  \bibinfo{author}{M.~P. Revuelta}, \bibinfo{author}{M.~I. Sanz},
\newblock \bibinfo{title}{On the \(n\)-color {Rado} number for the equation
  \(x_1+x_2+\cdots+x_k+c=x_{k+1}\)},
\newblock \bibinfo{journal}{Mathematics of Computation} \bibinfo{volume}{85}
  (\bibinfo{year}{2016}) \bibinfo{pages}{2047--2064}.
  \DOIprefix\doi{10.1090/mcom3034}.
\bibitem[{Chang et~al.(2022)Chang, De~Loera, and Wesley}]{chang2022}
\bibinfo{author}{Y.~Chang}, \bibinfo{author}{J.~A. De~Loera},
  \bibinfo{author}{W.~J. Wesley},
\newblock \bibinfo{title}{Rado numbers and {SAT} computations},
\newblock in: \bibinfo{booktitle}{Proceedings of the 2022 International
  Symposium on Symbolic and Algebraic Computation}, ISSAC ’22,
  \bibinfo{publisher}{ACM}, \bibinfo{year}{2022}, p.
  \bibinfo{pages}{333–342}. \DOIprefix\doi{10.1145/3476446.3535494}.
\bibitem[{Myers(2015)}]{myers2015}
\bibinfo{author}{K.~Myers}, \bibinfo{title}{Computational advances in {Rado}
  numbers}, Ph.D. thesis, Rutgers The State University of New Jersey - New
  Brunswick, \bibinfo{year}{2015}. \DOIprefix\doi{10.7282/T3GH9KTT}.
\bibitem[{Meurer et~al.(2017)Meurer, Smith, Paprocki, \v{C}ert\'{i}k,
  Kirpichev, Rocklin, Kumar, Ivanov, Moore, Singh, Rathnayake, Vig, Granger,
  Muller, Bonazzi, Gupta, Vats, Johansson, Pedregosa, Curry, Terrel,
  Rou\v{c}ka, Saboo, Fernando, Kulal, Cimrman, and Scopatz}]{sympy}
\bibinfo{author}{A.~Meurer}, \bibinfo{author}{C.~P. Smith},
  \bibinfo{author}{M.~Paprocki}, \bibinfo{author}{O.~\v{C}ert\'{i}k},
  \bibinfo{author}{S.~B. Kirpichev}, \bibinfo{author}{M.~Rocklin},
  \bibinfo{author}{A.~Kumar}, \bibinfo{author}{S.~Ivanov},
  \bibinfo{author}{J.~K. Moore}, \bibinfo{author}{S.~Singh},
  \bibinfo{author}{T.~Rathnayake}, \bibinfo{author}{S.~Vig},
  \bibinfo{author}{B.~E. Granger}, \bibinfo{author}{R.~P. Muller},
  \bibinfo{author}{F.~Bonazzi}, \bibinfo{author}{H.~Gupta},
  \bibinfo{author}{S.~Vats}, \bibinfo{author}{F.~Johansson},
  \bibinfo{author}{F.~Pedregosa}, \bibinfo{author}{M.~J. Curry},
  \bibinfo{author}{A.~R. Terrel}, \bibinfo{author}{v.~Rou\v{c}ka},
  \bibinfo{author}{A.~Saboo}, \bibinfo{author}{I.~Fernando},
  \bibinfo{author}{S.~Kulal}, \bibinfo{author}{R.~Cimrman},
  \bibinfo{author}{A.~Scopatz},
\newblock \bibinfo{title}{{SymPy}: Symbolic computing in {Python}},
\newblock \bibinfo{journal}{PeerJ Computer Science} \bibinfo{volume}{3}
  (\bibinfo{year}{2017}) \bibinfo{pages}{e103}.
  \DOIprefix\doi{10.7717/peerj-cs.103}.
\bibitem[{{de Moura} and Bjørner(2008)}]{deMoura2008}
\bibinfo{author}{L.~{de Moura}}, \bibinfo{author}{N.~Bjørner},
  \bibinfo{title}{Z3: An Efficient {SMT} Solver}, \bibinfo{publisher}{Springer
  Berlin Heidelberg}, \bibinfo{year}{2008}, p. \bibinfo{pages}{337–340}.
  \DOIprefix\doi{10.1007/978-3-540-78800-3_24}.
\bibitem[{Ábrahám et~al.(2016)Ábrahám, Abbott, Becker, Bigatti, Brain,
  Buchberger, Cimatti, Davenport, England, Fontaine, Forrest, Griggio,
  Kroening, Seiler, and Sturm}]{abraham2016}
\bibinfo{author}{E.~Ábrahám}, \bibinfo{author}{J.~Abbott},
  \bibinfo{author}{B.~Becker}, \bibinfo{author}{A.~M. Bigatti},
  \bibinfo{author}{M.~Brain}, \bibinfo{author}{B.~Buchberger},
  \bibinfo{author}{A.~Cimatti}, \bibinfo{author}{J.~H. Davenport},
  \bibinfo{author}{M.~England}, \bibinfo{author}{P.~Fontaine},
  \bibinfo{author}{S.~Forrest}, \bibinfo{author}{A.~Griggio},
  \bibinfo{author}{D.~Kroening}, \bibinfo{author}{W.~M. Seiler},
  \bibinfo{author}{T.~Sturm}, \bibinfo{title}{$\mathsf{SC}^\mathsf{2}$:
  Satisfiability Checking Meets Symbolic Computation (Project Paper)},
  \bibinfo{publisher}{Springer International Publishing}, \bibinfo{year}{2016},
  p. \bibinfo{pages}{28–43}. \DOIprefix\doi{10.1007/978-3-319-42547-4_3}.
\bibitem[{Bright et~al.(2022)Bright, Kotsireas, and Ganesh}]{Bright2022}
\bibinfo{author}{C.~Bright}, \bibinfo{author}{I.~Kotsireas},
  \bibinfo{author}{V.~Ganesh},
\newblock \bibinfo{title}{When satisfiability solving meets symbolic
  computation},
\newblock \bibinfo{journal}{Communications of the ACM} \bibinfo{volume}{65}
  (\bibinfo{year}{2022}) \bibinfo{pages}{64–72}.
  \DOIprefix\doi{10.1145/3500921}.
\bibitem[{Bright et~al.(2021)Bright, Cheung, Stevens, Kotsireas, and
  Ganesh}]{Bright2021}
\bibinfo{author}{C.~Bright}, \bibinfo{author}{K.~K.~H. Cheung},
  \bibinfo{author}{B.~Stevens}, \bibinfo{author}{I.~Kotsireas},
  \bibinfo{author}{V.~Ganesh},
\newblock \bibinfo{title}{A {SAT}-based resolution of {Lam's} problem},
\newblock \bibinfo{journal}{Proceedings of the AAAI Conference on Artificial
  Intelligence} \bibinfo{volume}{35} (\bibinfo{year}{2021})
  \bibinfo{pages}{3669–3676}. \DOIprefix\doi{10.1609/aaai.v35i5.16483}.
\bibitem[{Kirchweger et~al.(2023)Kirchweger, Peitl, and
  Szeider}]{Kirchweger2023}
\bibinfo{author}{M.~Kirchweger}, \bibinfo{author}{T.~Peitl},
  \bibinfo{author}{S.~Szeider},
\newblock \bibinfo{title}{Co-certificate learning with {SAT} modulo
  symmetries},
\newblock in: \bibinfo{booktitle}{Proceedings of the Thirty-Second
  International Joint Conference on Artificial Intelligence}, IJCAI-2023,
  \bibinfo{publisher}{International Joint Conferences on Artificial
  Intelligence Organization}, \bibinfo{year}{2023}, p.
  \bibinfo{pages}{1944–1953}. \DOIprefix\doi{10.24963/ijcai.2023/216}.
\bibitem[{Li et~al.(2024)Li, Bright, and Ganesh}]{Li2024}
\bibinfo{author}{Z.~Li}, \bibinfo{author}{C.~Bright},
  \bibinfo{author}{V.~Ganesh},
\newblock \bibinfo{title}{A {SAT} solver + computer algebra attack on the
  minimum {Kochen–Specker} problem},
\newblock in: \bibinfo{booktitle}{Proceedings of the Thirty-Third International
  Joint Conference on Artificial Intelligence}, IJCAI-2024,
  \bibinfo{publisher}{International Joint Conferences on Artificial
  Intelligence Organization}, \bibinfo{year}{2024}, p.
  \bibinfo{pages}{1898–1906}. \DOIprefix\doi{10.24963/ijcai.2024/210}.
\bibitem[{Kouril and Paul(2008)}]{kouril2006}
\bibinfo{author}{M.~Kouril}, \bibinfo{author}{J.~L. Paul},
\newblock \bibinfo{title}{The van der {Waerden} number \( w(2,6) \) is 1132},
\newblock \bibinfo{journal}{Experimental Mathematics} \bibinfo{volume}{17}
  (\bibinfo{year}{2008}) \bibinfo{pages}{53--61}.
  \DOIprefix\doi{10.1080/10586458.2008.10129025}.
\bibitem[{Herwig et~al.(2007)Herwig, Heule, van Lambalgen, and van
  Maaren}]{herwig2007}
\bibinfo{author}{P.~R. Herwig}, \bibinfo{author}{M.~J.~H. Heule},
  \bibinfo{author}{P.~M. van Lambalgen}, \bibinfo{author}{H.~van Maaren},
\newblock \bibinfo{title}{A new method to construct lower bounds for van der
  {Waerden} numbers},
\newblock \bibinfo{journal}{The Electronic Journal of Combinatorics}
  \bibinfo{volume}{14} (\bibinfo{year}{2007}). \DOIprefix\doi{10.37236/925}.
\bibitem[{Ahmed(2009)}]{ahmed2009}
\bibinfo{author}{T.~Ahmed},
\newblock \bibinfo{title}{Some new van der {Waerden} numbers and some van der
  {Waerden-type} numbers},
\newblock \bibinfo{journal}{Integers} \bibinfo{volume}{9}
  (\bibinfo{year}{2009}) \bibinfo{pages}{65--76}.
  \DOIprefix\doi{10.1515/INTEG.2009.007}.
\bibitem[{Ahmed(2010)}]{ahmed2010}
\bibinfo{author}{T.~Ahmed},
\newblock \bibinfo{title}{Two new van der {Waerden} numbers: \(w(2; 3, 17)\)
  and \(w(2; 3, 18)\)},
\newblock \bibinfo{journal}{Integers} \bibinfo{volume}{10}
  (\bibinfo{year}{2010}) \bibinfo{pages}{369--377}.
  \DOIprefix\doi{10.1515/integ.2010.032}.
\bibitem[{Ahmed(2012)}]{ahmed2011}
\bibinfo{author}{T.~Ahmed},
\newblock \bibinfo{title}{On computation of exact van der {Waerden} numbers},
\newblock \bibinfo{journal}{Integers} \bibinfo{volume}{12}
  (\bibinfo{year}{2012}) \bibinfo{pages}{417--425}.
  \DOIprefix\doi{10.1515/integ.2011.112}.
\bibitem[{Ahmed(2013)}]{ahmed2013}
\bibinfo{author}{T.~Ahmed},
\newblock \bibinfo{title}{Some more van der {Waerden} numbers},
\newblock \bibinfo{journal}{Journal of Integer Sequences} \bibinfo{volume}{16}
  (\bibinfo{year}{2013}). \bibinfo{note}{Article 13.4.4}.
\bibitem[{Ahmed et~al.(2014)Ahmed, Kullmann, and Snevily}]{aks2014}
\bibinfo{author}{T.~Ahmed}, \bibinfo{author}{O.~Kullmann},
  \bibinfo{author}{H.~Snevily},
\newblock \bibinfo{title}{On the van der {Waerden} numbers
  \(\operatorname{w}(2;3,t)\)},
\newblock \bibinfo{journal}{Discrete Applied Mathematics} \bibinfo{volume}{174}
  (\bibinfo{year}{2014}) \bibinfo{pages}{27--51}.
  \DOIprefix\doi{10.1016/j.dam.2014.05.007}.
\bibitem[{Ahmed and Schaal(2016)}]{as2015}
\bibinfo{author}{T.~Ahmed}, \bibinfo{author}{D.~J. Schaal},
\newblock \bibinfo{title}{On generalized {Schur} numbers},
\newblock \bibinfo{journal}{Experimental Mathematics} \bibinfo{volume}{25}
  (\bibinfo{year}{2016}) \bibinfo{pages}{213--218}.
  \DOIprefix\doi{10.1080/10586458.2015.1070776}.
\bibitem[{Ahmed et~al.(2023)Ahmed, Boza, Revuelta, and Sanz}]{abrs2023a}
\bibinfo{author}{T.~Ahmed}, \bibinfo{author}{L.~Boza}, \bibinfo{author}{M.~P.
  Revuelta}, \bibinfo{author}{M.~I. Sanz},
\newblock \bibinfo{title}{Exact values and lower bounds on the \(n\)-color weak
  {Schur} numbers for \(n=2,3\)},
\newblock \bibinfo{journal}{The Ramanujan Journal} \bibinfo{volume}{62}
  (\bibinfo{year}{2023}) \bibinfo{pages}{347--363}.
  \DOIprefix\doi{10.1007/s11139-023-00760-y}.
\bibitem[{Heule(2018)}]{heule2018}
\bibinfo{author}{M.~J.~H. Heule},
\newblock \bibinfo{title}{Schur number five},
\newblock in: \bibinfo{booktitle}{Proceedings of AAAI-18},
  \bibinfo{year}{2018}, pp. \bibinfo{pages}{6598--6606}.
  \DOIprefix\doi{10.1609/aaai.v32i1.12209}.
\bibitem[{Biere et~al.(2024{\natexlab{a}})Biere, Faller, Fazekas, Fleury,
  Froleyks, and Pollitt}]{biere2024cadical}
\bibinfo{author}{A.~Biere}, \bibinfo{author}{T.~Faller},
  \bibinfo{author}{K.~Fazekas}, \bibinfo{author}{M.~Fleury},
  \bibinfo{author}{N.~Froleyks}, \bibinfo{author}{F.~Pollitt},
\newblock \bibinfo{title}{{CaDiCaL 2.0}},
\newblock in: \bibinfo{editor}{A.~Gurfinkel}, \bibinfo{editor}{V.~Ganesh}
  (Eds.), \bibinfo{booktitle}{Computer Aided Verification - 36th International
  Conference, {CAV} 2024, Montreal, QC, Canada, July 24-27, 2024, Proceedings,
  Part {I}}, volume \bibinfo{volume}{14681} of \textit{\bibinfo{series}{Lecture
  Notes in Computer Science}}, \bibinfo{publisher}{Springer},
  \bibinfo{year}{2024}{\natexlab{a}}, pp. \bibinfo{pages}{133--152}.
  \DOIprefix\doi{10.1007/978-3-031-65627-9_7}.
\bibitem[{Biere et~al.(2024{\natexlab{b}})Biere, Faller, Fazekas, Fleury,
  Froleyks, and Pollitt}]{biere2024kissat}
\bibinfo{author}{A.~Biere}, \bibinfo{author}{T.~Faller},
  \bibinfo{author}{K.~Fazekas}, \bibinfo{author}{M.~Fleury},
  \bibinfo{author}{N.~Froleyks}, \bibinfo{author}{F.~Pollitt},
\newblock \bibinfo{title}{{CaDiCaL}, {Gimsatul}, {IsaSAT} and {Kissat} entering
  the {SAT Competition 2024}},
\newblock in: \bibinfo{editor}{M.~Heule}, \bibinfo{editor}{M.~Iser},
  \bibinfo{editor}{M.~J{\"a}rvisalo}, \bibinfo{editor}{M.~Suda} (Eds.),
  \bibinfo{booktitle}{Proc.~of {SAT Competition} 2024 -- Solver, Benchmark and
  Proof Checker Descriptions}, volume \bibinfo{volume}{B-2024-1} of
  \textit{\bibinfo{series}{Department of Computer Science Report Series B}},
  \bibinfo{publisher}{University of Helsinki},
  \bibinfo{year}{2024}{\natexlab{b}}, pp. \bibinfo{pages}{8--10}.
\bibitem[{Ignatiev et~al.(2018)Ignatiev, Morgado, and
  Marques{-}Silva}]{imms-sat18}
\bibinfo{author}{A.~Ignatiev}, \bibinfo{author}{A.~Morgado},
  \bibinfo{author}{J.~Marques{-}Silva},
\newblock \bibinfo{title}{{PySAT:} {A} {Python} toolkit for prototyping with
  {SAT} oracles},
\newblock in: \bibinfo{booktitle}{SAT}, \bibinfo{year}{2018}, pp.
  \bibinfo{pages}{428--437}. \DOIprefix\doi{10.1007/978-3-319-94144-8_26}.
\bibitem[{Shallit(2021)}]{shallit2021}
\bibinfo{author}{J.~Shallit},
\newblock \bibinfo{title}{Say no to case analysis: Automating the drudgery of
  case-based proofs},
\newblock in: \bibinfo{booktitle}{CIAA 2021}, volume \bibinfo{volume}{12803} of
  \textit{\bibinfo{series}{LNCS}}, \bibinfo{year}{2021}, pp.
  \bibinfo{pages}{14--24}. \DOIprefix\doi{10.1007/978-3-030-79121-6_2}.

\end{thebibliography}

\appendix
\section{Proof of Lemma~\ref{lem:r1value}}\label{sec:r1proof}

In this section we assume $a$ and $b$ are positive coprime integers.  Lemma~\ref{lem:r1value} then says
\begin{equation} R_1({\cal E}(3,0;a,b,b))=\max(a+1,b) \quad \text{and} \quad R_1({\cal E}(3,0;a,a,b))=\begin{cases}2a & \text{if $b=1$,} \\
\max(a,\lceil b/2\rceil) & \text{if $b>1$.}\end{cases} \label{eq:r1value} \end{equation}

For the first equation of~\eqref{eq:r1value}, we want to find the solution $(x,y,z)$ of $ax+by=bz$ that minimizes $\max(x,y,z)$;
then $R_1({\cal E}(3,0;a,b,b))=\max(x,y,z)$.
Since $a$ and $b$ are coprime, $x$ must be divisible by $b$, so $(b,1,a+1)$ is the solution of $ax+by=bz$
with smallest possible $x$ and $y$ values.  Since $z=(ax+by)/b$, the solution with the smallest possible $x$ and $y$
will also minimize $z$.  Thus, the smallest possible value of $\max(x,y,z)$ is $\max(b,1,a+1)=\max(a+1,b)$.

For the second equation of~\eqref{eq:r1value}, we want to find the solution $(x,y,z)$ of $ax+ay=bz$ that minimizes $\max(x,y,z)$;
then $R_1({\cal E}(3,0;a,a,b))=\max(x,y,z)$.
Since $a$ and $b$ are coprime, $z$ must be divisible by~$a$, so say $z=ak$, giving $x+y=bk$.
Then $\max(x,y,z)$ is minimized when $x$ and $y$ are as close to equal as possible (i.e., $x\approx y\approx bk/2$).

If $b$ is even, $\max(x,y,z)$ is minimized when $k=1$ and $x=y=b/2$, giving $\max(x,y,z)=\max(a,b/2)$,
so $R_1({\cal E}(3,0;a,a,b))=\max(a,b/2)$ when $b$ is even.
If $b>1$ is odd, then $\max(x,y,z)$ is minimized when $k=1$ and $\{x,y\}=\{\lfloor b/2\rfloor,\lceil b/2\rceil\}$,
so $R_1({\cal E}(3,0;a,a,b))=\max(a,\lceil b/2\rceil)$ when $b>1$ is odd.
If $b=1$, then $\max(x,y,z)$ is minimized when $k=2$ and $x=y=1$ (note $x+y=k$ has no solutions in positive integers
when $k=1$) and then $R_1({\cal E}(3,0;a,a,b))=\max(1,1,2a)=2a$.

\section{Written proof of Theorem \ref{th:lb-abb}}\label{pr:lb-abb}
Suppose there is a solution to $ax+by=bz$ monochromatic in colour $0$ (Dark Gray).
Since \(\text{gcd}(a,b)=1\) and there is no multiple of $b$ in $D_1$, the variable $x$ must be in $D_2$. Taking $x=ba^2+ba+b$ and $y=\min(D_1)=ba+1$ as smallest possible values, we get $z = a^3+a^2+(b+1)a+1$, but $\delta(z)=1$ (Red) for $z \geq a^3+a^2+(b+1)a+1$, a contradiction.

Suppose there is a solution to $ax+by=bz$ monochromatic in colour $1$ (Red). 
Since $R_1$ does not contain a multiple of $b$, variable $x$ cannot be in $R_1$.  
Also, if $x, y \in R_3$, then clearly $z > a^3+a^2+(2b+1)a$. So, $x$ and $y$ both cannot be in $R_3$. Now we consider the following cases:
\begin{itemize}
  \item [(a)] $x\in R_2$ and $y\in R_1$:  
We have $z={ax\over b}+y = baw+y$ for $1\leq w\leq a$. 
Since, $y \not\equiv{0}\pmod{b}$, we have $baw+y \not\equiv{0}\pmod{b}$ which implies
$baw+y \not\equiv{0}\pmod{b^2}$. Hence $baw+y \not\in R_2$. If $baw+y \in R_1$, then 
since $w\geq 1$ and $y\geq 1$, we have $baw+y > ba = \max{(R_1)}$, a contradiction.

  \item [(b)] $x\in R_2$ and $y\in R_2$: We have $z \not\in R_3$ since 
\begin{align*}
z &\leq a(b^2a)/b+b^2a = ba^2+b^2a \\ 
  &\leq a^3+a^2+(b+1)a < \min(R_3)
\end{align*} 
when $a^2+a+b > b^2 + ba$. Suppose $z\in R_2$. In that case,
\(ax_1+bx_2=bx_3\) with \(1 \leq x_1,x_2,x_3 \leq ab^2\) implies \(ak_1+bk_2=bk_3\) 
with \(1 \leq k_1, k_2, k_3 \leq a\). Then, $k_1 = (b/a)(k_3 - k_2)$ has no integer solution since 
\(\text{gcd}(a,b)=1\) and \(0\leq k_3-k_2<a\). Hence, $\delta(z)\neq 1$.
  \item [(c)] $x\in R_3$ and $y\in R_1$:  
Consider the condition, with $a^3+a^2+(b+1)a + c$ being the smallest integer in $R_3$ which is divisible by $b$. Then, we have $a^2+a+b > ba+b^2$ which can be re-written as
${a\over b}(a^3+a^2+(b+1)a + c)+1 > {a\over b}(ba^2 + ab^2 + a + c)+1$, which implies \[z > (a^3 + a^2b+a^2/b + ac/b) > \max(R_3).\]
Hence, $\delta(z)\neq 1$, a contradiction.
  \item [(d)] $x\in R_3$ and $y\in R_2$:
As in the previous case, with $a^3+a^2+(b+1)a + c$ being the smallest integer in $R_3$ which is divisible by $b$, and $b^2$ being the smallest integer in $R_2$, we get $z > \max(R_3)$ implying 
\(z\not\in R_3\) implying $\delta(z)\neq 1$, a contradiction. 
\end{itemize}

Suppose there is a solution to $ax+by=bz$ monochromatic in colour $2$ (Blue). Then we have the following cases:
\begin{itemize}
\item [(i)] If $x, y \in B_2$, then 
\begin{align*}
z&={ax\over b}+y\geq {a(b^2a+b)\over b}+(b^2a+b) \\ 
  &= ba^2+b^2a+a+b > \max(B_2),
\end{align*}
which results a contradiction since integers beyond $\max(B_2)$ are coloured in either colour $0$ or in colour $1$.

\item [(ii)] If $x, y \in B_1$, then in the equation $z={ax\over b}+y$, we have ${x\over b} \equiv 1,2,\ldots,b-1\pmod{b}$. Since $\gcd(a,b)=1$, we have ${ax\over b}\not\equiv{0}\pmod{b}$. Hence, $z={ax\over b}+y\not\equiv{b,2b,\ldots,b(b-1)}\pmod{b^2}$ and $z\not\equiv{0}\pmod{b}$ implying $\delta(z)\neq 2$, a contradiction.

\item [(iii)] If $x\in B_1$ and $y\in B_2$, then as in the previous case, we have ${ax\over b}\not\equiv{0}\pmod{b}$. Since $y\equiv{0}\pmod{b}$, we have ${ax\over b}+y\not\equiv{0}\pmod{b}$, that is, $z\not\in B_2$. 
Hence, $\delta(z)\neq 2$, a contradiction.

\item [(iv)] If $x\in B_2$ and $y\in B_1$, then
$z = {ax\over b}+y \geq {a(b^2a+b)/b+b} = a^2b+a+b > \max(B_1)$.
Also, since $z\not\equiv{0}\pmod{b}$, we have $z\not\in B_2$. Hence, $\delta(z)\neq 2$, a contradiction.
\end{itemize}
Hence, $$R_3({\cal E}(3, 0; a, b, b)) \geq a^3+a^2+(2b+1)a+1$$
if $a^2+a+b > b^2 + ba$. 

\section{Written proof of Theorem \ref{th:lb-aaap1}}\label{pr:lb-aaap1}
\subsection{Integrality properties}
  
\begin{lemma}\label{integrality}
  Let\/ $a$ be a positive integer. There is no integer solution $x_1,x_2,x_3$ to the equation \(ax_1 + ax_2 = (a+1)x_3\) 
  if any of the following is true:
  \myitemize{11pt}{
    \item [a.] \(x_1,x_2,x_3\not\equiv{0}\pmod{a}\);
    \item [b.] \(x_1,x_2,x_3\equiv{0}\pmod{a}\) and \(x_1,x_2,x_3\not\equiv{0}\pmod{a^2}\);
    \item [c.] \(x_1,x_2\equiv{0}\pmod{a^2}\) and \(x_3\not\equiv{0}\pmod{a}\);
    \item [d.] \(x_1,x_3\equiv{0}\pmod{a^2}\) and \(x_2\not\equiv{0}\pmod{a}\);
    \item [e.] \(x_2,x_3\equiv{0}\pmod{a^2}\) and \(x_1\not\equiv{0}\pmod{a}\);
    \item [f.] \(x_1\equiv{0}\pmod{a^2}\) and \(x_2,x_3\not\equiv{0}\pmod{a}\);
    \item [g.] \(x_2\equiv{0}\pmod{a^2}\) and \(x_1,x_3\not\equiv{0}\pmod{a}\);
  }

  \begin{proof}
    In each of the cases, assume that an integer solution \(x_1,x_2,x_3\) exists to \(ax_1+ax_2=(a+1)x_3\).
    \myitemize{11pt}{
      \item [a.] Simplifying the equation, we get \(x_1+x_2=x_3+x_3/a\), which due to the integrality assumption, implies \(a|x_3\), that is, \(x_3\equiv{0}\pmod{a}\), a contradiction.
      \item [b.] For \(i=1,2,3\), since \(x_i\equiv{0}\pmod{a}\) and \(x_i\not\equiv{0}\pmod{a^2}\), let \(x_i=a\cdot k_i\) such that \(k_i\not\equiv{0}\pmod{a}\). Upon substitution,
      we obtain \(a^2\cdot(k_1+k_2-k_3)=a\cdot k_3\) which implies \(k_3\equiv{0}\pmod{a}\), a contradiction.
      \item [c.] Let \(x_1=a^2k_1\) and \(x_2=a^2k_2\). Then after substitution and simplification, we obtain \(a^2k_1+a^2k_2=x_3+x_3/a\), which due to the integrality assumption, implies \(a|x_3\), that is, \(x_3\equiv{0}\pmod{a}\), a contradiction.
      \item [d.] Let \(x_1=a^2k_1\) and \(x_3=a^2k_3\). Then after substitution and simplification, we obtain \(ak_1+x_2/a=ak_3+k_3\), which due to the integrality assumption, implies \(a|x_2\), that is, \(x_2\equiv{0}\pmod{a}\), a contradiction.
      \item [e.] Let \(x_2=a^2k_2\) and \(x_3=a^2k_3\). Then after substitution and simplification, we obtain \(x_1/a+ak_2=ak_3+k_3\), which due to the integrality assumption, implies \(a|x_1\), that is, \(x_1\equiv{0}\pmod{a}\), a contradiction.
      \item [f.] Let \(x_1=a^2k_1\). Then after substitution and simplification, we obtain \(a^2k_1+x_2=x_3+x_3/a\), which due to the integrality assumption, implies \(a|x_3\), that is, \(x_3\equiv{0}\pmod{a}\), a contradiction.
      \item [g.] Let \(x_2=a^2k_2\). Then after substitution and simplification, we obtain \(x_1+a^2k_2=x_3+x_3/a\), which due to the integrality assumption, implies \(a|x_3\), that is, \(x_3\equiv{0}\pmod{a}\), a contradiction. \qedhere
      }
  \end{proof}
\end{lemma}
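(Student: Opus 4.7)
\smallskip
\noindent\textit{Proof proposal.} The plan is to exploit the single arithmetic fact that $\gcd(a, a+1) = 1$, which makes all seven cases reducible to a straightforward divisibility argument. Rewriting the equation as $a(x_1 + x_2) = (a+1) x_3$, coprimality of $a$ and $a+1$ immediately forces $a \mid x_3$. This observation alone handles cases (a), (f), and (g): in each, the hypothesis explicitly states $x_3 \not\equiv 0 \pmod{a}$ (or implies it, as in (f) and (g) where $x_3 \not\equiv 0 \pmod a$ is assumed).

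For the remaining cases I would strip off the largest common power of $a$ that the hypotheses impose and then reapply the same coprimality argument. For case (b), substitute $x_i = a k_i$ with $k_i \not\equiv 0 \pmod a$ to obtain $a(k_1+k_2) = (a+1) k_3$; coprimality again yields $a \mid k_3$, hence $a^2 \mid x_3$, contradicting the hypothesis that $x_3 \not\equiv 0 \pmod{a^2}$. For cases (c), (d), (e), I would substitute the $a^2$-multiples explicitly. In (c), writing $x_1 = a^2 k_1$, $x_2 = a^2 k_2$ makes the LHS divisible by $a^3$, so from $a^3(k_1+k_2) = (a+1) x_3$ coprimality forces $a \mid x_3$, contradicting the assumption. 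In (d) and (e), substituting the two $a^2$-multiples and isolating the remaining variable yields an expression that is visibly divisible by $a$, contradicting the hypothesis on that variable.

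The argument is essentially uniform: after suitable substitution, the equation reduces to one of the form $a \cdot (\text{integer}) = (a+1) \cdot (\text{something})$ or $(a+1) \cdot (\text{integer}) = a \cdot (\text{something})$, and the coprimality of $a$ and $a+1$ delivers the divisibility contradiction. The main ``obstacle'' is purely bookkeeping: keeping the seven cases organized and choosing, in each, which variable(s) to eliminate first so that the coprimality argument applies cleanly. There is no genuine mathematical difficulty beyond $\gcd(a,a+1)=1$; the value of the lemma is that it packages the integrality obstructions needed downstream when \AC\ verifies that no monochromatic triple $(x_1,x_2,x_3)$ with specified residue-class membership can satisfy $a x_1 + a x_2 = (a+1) x_3$.
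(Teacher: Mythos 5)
Your proposal is correct and follows essentially the same route as the paper: case-by-case substitution of the assumed multiples of $a$ or $a^2$, followed by the divisibility observation that $a(x_1+x_2)=(a+1)x_3$ (or its reduced form) forces $a$ to divide the variable assumed not to be a multiple of $a$ — the paper phrases this via an integrality argument after dividing through by $a$, which is the same use of $\gcd(a,a+1)=1$. Your grouping of cases (a), (c), (f), (g) under the single observation $a\mid x_3$ is a minor streamlining, not a different argument.
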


\subsection{Proof of good colouring}
\begin{lemma}
  For odd positive integer \(a \geq 7\), there exists no monochromatic solution to \(ax_1+ax_2=(a+1)x_3\) in colour \(0\).
  \begin{proof}
    If \(x \in S_a(N)\setminus \overline{S_{a^2}}(N)\), then by Lemma \ref{integrality}(b), there exists no solution to \(ax_1+ax_2=(a+1)x_3\).
    Hence there is no solution to \(ax_1+ax_2=(a+1)x_3\) monochromatic in colour \(0\).
  \end{proof}
\end{lemma}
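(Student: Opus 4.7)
My plan is to reduce this lemma to a single application of Lemma~\ref{integrality}(b). First I would unpack the definition of the colour-$0$ class: by the colouring $\delta$, colour $0$ consists exactly of the integers in $S_a(N)\setminus S_{a^2}(N)$, i.e., those integers in $[1,N]$ that are divisible by $a$ but not by $a^2$. So if $(x_1,x_2,x_3)$ is a colour-$0$ monochromatic triple satisfying $ax_1+ax_2=(a+1)x_3$, then for each $i\in\{1,2,3\}$ we have $a\mid x_i$ and $a^2\nmid x_i$.

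Next I would observe that this is exactly the divisibility configuration forbidden by Lemma~\ref{integrality}(b), which asserts that no integer triple with all three coordinates divisible by $a$ but none divisible by $a^2$ can satisfy $ax_1+ax_2=(a+1)x_3$. Applying the lemma to the hypothetical colour-$0$ solution produces an immediate contradiction, proving no such monochromatic solution exists.

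I expect essentially no obstacle here, since all of the genuine arithmetic content was already established in Lemma~\ref{integrality}. The only verification I would carry out is that the set $S_a(N)\setminus S_{a^2}(N)$ really coincides with the divisibility hypothesis of Lemma~\ref{integrality}(b), which is immediate from the definitions of $S_d(N)$. Note also that the hypothesis that $a\geq 7$ is odd plays no role in this particular case; it is reserved for the analyses of the colour-$1$ and colour-$2$ classes, where the more delicate structure of $R_\ell\cup R_r$ and $B_\ell\cup B_r$ must be used.
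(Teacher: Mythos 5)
Your proof is correct and takes essentially the same route as the paper: colour $0$ consists precisely of the integers divisible by $a$ but not by $a^2$, so Lemma~\ref{integrality}(b) immediately excludes any monochromatic solution in that colour. Your reading $S_a(N)\setminus S_{a^2}(N)$ is in fact the accurate one---the paper's written proof contains a small notational slip (it writes $S_a(N)\setminus \overline{S_{a^2}}(N)$, which would denote the multiples of $a^2$) but intends exactly the same application of Lemma~\ref{integrality}(b).
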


\begin{lemma}\label{lem:R_r}
  There exists no monochromatic solution to \(ax_1+ax_2=(a+1)x_3\) if \(x_1,x_2,x_3 \in R_r\).
  \begin{proof} 
    Given \(x_1,x_2\in R_r\), assume that \(x_3\in R_r\) where \(x_3=(ax_1+ax_2) / (a+1)\). Let
    \(x_1=k_1a^2\) and \(x_2=k_2a^2\) for positive integers \(k_1\) and \(k_2\). 
    Substituting for \(x_3\), we get the numerator to be \(a^3(k_1+k_2)\). Since \(a^3\) 
    shares no factor with \(a+1\), if \(a+1\) does not divide \(k_1+k_2\), then $x_3\not\in {\mathbb Z}$, 
    a contradiction. Assume that \(k_1+k_2=m(a+1)\) for some positive integer \(m\). 

    Assume that \(k_1+k_2=m(a+1)\) for some positive integer \(m\). Then, \(z=a^3m\), but for \(1\leq m \leq a-1\), by definition of the sets $B_r$ and $R_r$, we have \(a^3m\in B_r\) and since \(B_r\cap R_r=\emptyset\), we have \(a^3m\not\in R_r\). 
    If \(z=a^4\), then \(m=a\) implying \(k_1+k_2=a(a+1)\) which is impossible since \(k_1+k_2 \leq ai+j \leq a^2-1 < a(a+1)\) given \(0\leq i\leq a-1, i+1\leq j\leq a-1\). 
  \end{proof}
\end{lemma}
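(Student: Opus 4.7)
The plan is to exploit the strong divisibility structure of $R_r$: every element is divisible by $a^2$. Writing $x_i = k_i a^2$ and substituting into $ax_1+ax_2=(a+1)x_3$ gives $a(k_1+k_2)=(a+1)k_3$, and since $\gcd(a,a+1)=1$ this forces $a\mid k_3$. Setting $k_3=am$ yields $k_1+k_2=(a+1)m$ and $x_3=a^3 m$, so any monochromatic solution must have $x_3$ divisible by $a^3$.

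The second step is to enumerate the multiples of $a^3$ that actually belong to $R_r$. For the middle union, $ia^3+ja^2$ is a multiple of $a^3$ only when $a\mid j$, but the constraint $i+1\leq j\leq a-1$ forbids both $j=0$ and $j=a$. For the diagonal union, $ia^3+ia^2=a^2\,i(a+1)$ is a multiple of $a^3$ exactly when $a\mid i$, which is excluded by $1\leq i\leq (a-1)/2$. Thus $x_3=a^4$ is the only option, pinning $m=a$ and reducing the problem to showing $k_1+k_2=a^2+a$ has no solution with $x_1,x_2\in R_r$.

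The main step is a case analysis on where the $k_\ell$ come from in $K_r:=\{a^2\}\cup\{ia+j:0\leq i\leq a-1,\ i+1\leq j\leq a-1\}\cup\{i(a+1):1\leq i\leq (a-1)/2\}$. If one $k_\ell$ equals $a^2$, the partner must equal $a$, and a direct check rules $a$ out of each of the three pieces of $K_r$. Otherwise both $k_\ell\leq a^2-1$, and I would split into three sub-cases. In the middle--middle sub-case, reducing $k_1+k_2=a^2+a$ modulo $a$ forces $j_1+j_2=a$ (the only multiple of $a$ in the range $[2,2a-2]$), the quotient then forces $i_1+i_2=a$, and the defining inequality $j_\ell\geq i_\ell+1$ yields $j_1+j_2\geq a+2$, a contradiction. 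The diagonal--diagonal sub-case dies immediately from $i_1+i_2\leq a-1<a$, and the mixed sub-case succumbs to the same modular trick combined with $j\geq i+1$.

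The main obstacle will be this last step: a crude magnitude bound is not strong enough, because $k_1+k_2$ is permitted to reach $2(a^2-1)$, comfortably above the target $a^2+a$. The argument genuinely needs the residues of $k_\ell$ modulo $a$, together with the strict off-diagonal constraint $j>i$ baked into the middle union of $R_r$. Each individual sub-case collapses to a one-line arithmetic contradiction, but the full enumeration is exactly the sort of tedious-yet-mechanical bookkeeping that \AC\ is designed to discharge, so I would encode the three symbolic pieces of $K_r$ and delegate the unsatisfiability checks to the SMT back end.
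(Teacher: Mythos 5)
Your proof is correct and follows the same skeleton as the paper's: write $x_\ell = k_\ell a^2$, use $\gcd(a,a+1)=1$ to force $x_3=a^3m$ with $k_1+k_2=(a+1)m$, and then rule out the surviving possibilities. You diverge in two places, and in both your version is the more careful one. First, where the paper excludes $1\le m\le a-1$ by noting $a^3m\in B_r$ and invoking $B_r\cap R_r=\emptyset$, you show directly that $a^4$ is the only multiple of $a^3$ in $R_r$ (no middle element has $a\mid j$, no diagonal element has $a\mid i$); this also disposes of $m>a$, which the paper leaves implicit. Second, and more importantly, for the residual case $k_1+k_2=a(a+1)$ the paper offers only the bound $k_1+k_2\le ai+j\le a^2-1$, which as written bounds a single middle element: it does not account for the summand $k_\ell=a^2$ (i.e.\ $x_\ell=a^4$), nor for the diagonal elements, nor for the fact that a sum of two admissible $k$'s can reach $2(a^2-1)>a^2+a$. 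Your residue-mod-$a$ analysis combined with the off-diagonal constraint $j\ge i_\ell+1$ (plus the check that $a\notin K_r$) is exactly what is needed to close this case rigorously, and each sub-case you list does collapse as you claim (in the mixed case $j_1+i_2=a$ forces $i_1+i_2=a$ and then $j_1=i_1$, contradicting $j_1\ge i_1+1$). So your route buys a complete, hand-checkable argument for the one step where the paper's written justification is too crude and the real burden is carried by the \AC\ verification, at the cost of a somewhat longer enumeration.
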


\begin{lemma}\label{lem:12ell-3r}
  For odd positive integer \(a \geq 7\), there exists no monochromatic solution to \(ax_1+ax_2=(a+1)x_3\) if \(x_1,x_2 \in R_{\ell}\) and \(x_3\in R_r\).
\begin{proof}
  Since \(x_1,x_2\in R_{\ell}\), let \(x_1 = i_1a(a+1)+r_1\) and \(x_2 = i_2a(a+1)+r_2\) with 
  \(0\leq i_1,i_2\leq a^2-1\), \(1\leq r_1,r_2 < a(a+1)\). Also, let \(x_3= a^2k_3\).
  Therefore, \[k_3 = (i_1+i_2) + {r_1+r_2\over a(a+1)}.\]

  If \(r_1 + r_2 = a(a + 1)\), then by construction of \(B_{\ell}\) and \(R_{\ell}\), the integers \(x_1\) and
  \(x_2\) both not in the same set, a contradiction.
\end{proof}
\end{lemma}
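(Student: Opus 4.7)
The plan is to reduce the claim to a small arithmetic contradiction about the $(j,k)$ coordinates of $x_1$ and $x_2$. First, since $x_3\in R_r\subseteq S_{a^2}(N)$, write $x_3=a^2k_3$; substituting into $ax_1+ax_2=(a+1)x_3$ and dividing by $a$ yields
\[
x_1+x_2=a(a+1)\,k_3,
\]
so $x_1+x_2$ must be divisible by $a(a+1)$. Next, each $x_i\in R_\ell$ has the canonical expression $x_i=i_i\,a(a+1)+a j_i+k_i$ with $0\le i_i\le a^2-1$, $0\le j_i\le a$, and $2\lfloor j_i/2\rfloor+1\le k_i\le a-1$, so the ``residue'' $r_i:=a j_i+k_i$ satisfies $1\le r_i\le a(a+1)-1$. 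The divisibility of $x_1+x_2$ by $a(a+1)$ then forces $r_1+r_2=a(a+1)$ exactly, because this is the unique multiple of $a(a+1)$ in the range $[2,\,2a(a+1)-2]$.

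Expanding, $r_1+r_2=a(j_1+j_2)+(k_1+k_2)=a(a+1)$. Since $1\le k_i\le a-1$, the sum $k_1+k_2$ lies in $[2,2a-2]$, so the congruence $a\mid k_1+k_2$ pins down $k_1+k_2=a$, and consequently $j_1+j_2=a$. This is where the oddness of $a$ becomes essential: $j_1+j_2=a$ odd forces exactly one of $j_1,j_2$ to be even and the other odd.

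The finishing move is an inequality derived from the defining constraint of $R_\ell$. Summing $k_i\ge 2\lfloor j_i/2\rfloor+1$ over $i=1,2$ gives
\[
k_1+k_2\ge 2\bigl(\lfloor j_1/2\rfloor+\lfloor j_2/2\rfloor\bigr)+2.
\]
With one of $j_1,j_2$ even and the other odd, the floor sum evaluates to $(j_1+j_2-1)/2=(a-1)/2$, so $k_1+k_2\ge a+1$, contradicting $k_1+k_2=a$.

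The main obstacle I anticipate is ensuring the boundary cases are handled cleanly. In particular, one should verify that $j_i=a$ never arises for members of $R_\ell$ (the requirement $k_i\ge 2\lfloor a/2\rfloor+1=a$ is incompatible with $k_i\le a-1$, so in fact $j_i\le a-1$), and one should carefully justify the parity computation of $\lfloor j_1/2\rfloor+\lfloor j_2/2\rfloor$ that converts the $\ge a$ bound into the strict $\ge a+1$ bound. The oddness hypothesis on $a$ is used essentially here; without it the argument would only yield $k_1+k_2\ge a$, which is consistent with $k_1+k_2=a$ and would leave the lemma unproved.
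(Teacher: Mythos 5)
Your proof is correct and follows essentially the same route as the paper's: write $x_1,x_2$ in block form, use $x_3=a^2k_3$ to force $a(a+1)\mid x_1+x_2$ and hence $r_1+r_2=a(a+1)$, then contradict the assumption that $x_1$ and $x_2$ both lie in $R_{\ell}$. Your explicit derivation of $k_1+k_2=a$ and $j_1+j_2=a$, followed by the parity argument $k_1+k_2\ge 2\bigl(\lfloor j_1/2\rfloor+\lfloor j_2/2\rfloor\bigr)+2=a+1$ (which is where the oddness of $a$ enters), supplies precisely the detail the paper compresses into the phrase ``by construction of $B_{\ell}$ and $R_{\ell}$''.
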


\begin{lemma}
  For odd positive integer \(a \geq 7\), there exists no monochromatic solution to \(ax_1+ax_2=(a+1)x_3\) in colour \(1\).
  \begin{proof}
    Let \(R_r = R_1\cup R_2\cup R_3\) with \(R_1=\set{a^4}\), \(R_2=\bigcup_{i=1}^{a-1}\set{ia^3+ja^2: i+1\leq j\leq a-1}\), and \(R_3=\bigcup_{i=1}^{(a-1)/2}\set{ia^3+ia^2}\).
    There are \(4^3=64\) ways to select an integer for the variables \(x_1,x_2,x_3\) from the \(4\) sets \(R_{\ell}, R_1, R_2, R_3\). 
    Most of these cases can be analyzed directly using Lemma \ref{integrality} as follows:
    \myitemize{11pt}{
      \item \(x_1,x_2,x_3\in R_{\ell}\): No monochromatic solution by Lemma \ref{integrality} (a) covering \(1\) case.
      \item \(x_1\in R_{\ell}, x_2,x_3\in R_r\): No monochromatic solution by Lemma \ref{integrality} (e) covering \(9\) cases.
      \item \(x_2\in R_{\ell}, x_1,x_3\in R_r\): No monochromatic solution by Lemma \ref{integrality} (d) covering \(9\) cases.
      \item \(x_3\in R_{\ell}, x_1,x_2\in R_r\): No monochromatic solution by Lemma \ref{integrality} (c) covering \(9\) cases.
      \item \(x_2,x_3\in R_{\ell}, x_1\in R_r\): No monochromatic solution by Lemma \ref{integrality} (f) covering \(3\) cases.
      \item \(x_1,x_3\in R_{\ell}, x_2\in R_r\): No monochromatic solution by Lemma \ref{integrality} (g) covering \(3\) cases.
      \item \(x_1,x_2\in R_{\ell}, x_3\in R_r\): No monochromatic solution by Lemma \ref{lem:12ell-3r} covering \(3\) cases.
      \item \(x_1,x_2,x_3\in R_{r}\): No monochromatic solution by Lemma \ref{lem:R_r} covering \(27\) cases.
      }
    Hence, there exists no monochromatic solution to \(ax_1+ax_2=(a+1)x_3\) in colour \(1\).
  \end{proof}
\end{lemma}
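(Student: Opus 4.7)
The plan is to exhaustively analyze the structure of a hypothetical monochromatic triple $(x_1,x_2,x_3)$ in colour~1, exploiting the fact that colour~1 is the disjoint union $R_\ell \cup R_r$ of two sets with incompatible divisibility signatures: every element of $R_\ell$ is coprime to $a$, while every element of $R_r$ is divisible by $a^2$. This dichotomy means most cases collapse to divisibility bookkeeping and can be dispatched by Lemma~\ref{integrality}.

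First I would enumerate the $2^3=8$ ways to assign each variable to either $R_\ell$ or $R_r$. Six of these patterns place at least one variable in each of $R_\ell$ and $R_r$, and in every such mixed case the triple lands in one of the divisibility scenarios catalogued in Lemma~\ref{integrality}: for instance, if only $x_1\in R_\ell$ then $x_2,x_3\equiv 0\pmod{a^2}$ and $x_1\not\equiv 0\pmod{a}$, which is ruled out by part~(e); symmetric arguments using parts~(c), (d), (f), (g) dispose of the other patterns with exactly one or two variables in $R_\ell$. The all-$R_\ell$ case is excluded directly by part~(a), since each $x_i$ is coprime to $a$ yet the equation $ax_1+ax_2=(a+1)x_3$ forces $a\mid x_3$.

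Two configurations remain. For the all-$R_r$ case one writes $x_i=k_i a^2$ to obtain $x_3 = a^3(k_1+k_2)/(a+1)$, concludes $a+1\mid k_1+k_2$, and then argues from the explicit description of $R_r$ that $x_3=a^3 m$ must lie in $B_r$ (not $R_r$) for all admissible $m\in[1,a-1]$, while $m=a$ is precluded by a range argument on the indices $i,j$ parametrising $R_r$. For the remaining pattern $x_1,x_2\in R_\ell$, $x_3\in R_r$, one parametrises $x_1,x_2$ by their block indices and residues modulo $a(a+1)$, deduces that the residues $r_1,r_2$ must satisfy $r_1+r_2=a(a+1)$, and then invokes the definition of $R_\ell$ versus $B_\ell$ (the parity condition $2\lfloor j/2\rfloor+1\leq k\leq a-1$) to show that no complementary pair can lie entirely inside $R_\ell$. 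These two arguments I would extract as Lemmas~\ref{lem:R_r} and~\ref{lem:12ell-3r}.

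The main obstacle is this last case. Unlike the integrality cases, it genuinely depends on the fine combinatorial design of the block colouring---specifically, on how each block of $a-1$ non-multiples of $a$ gets split between $R_\ell$ and $B_\ell$ via the asymmetric cutoff depending on $\lfloor j/2\rfloor$. Verifying that the partition was engineered so that complementary residue pairs summing to $a(a+1)$ never sit entirely inside $R_\ell$ is the combinatorial heart of the argument; once that is in hand, the eight-way case split collapses quickly and the lemma follows.
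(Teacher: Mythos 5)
Your proposal follows essentially the same route as the paper: split on membership of each variable in $R_\ell$ versus $R_r$ (the paper further subdivides $R_r$ into three blocks, giving $64$ rather than $8$ cases, but this is only a bookkeeping difference), dispatch the all-$R_\ell$ case and five of the mixed cases via parts (a), (c)--(g) of the integrality lemma, and handle the two genuinely combinatorial cases---all three variables in $R_r$, and $x_1,x_2\in R_\ell$ with $x_3\in R_r$---by exactly the two auxiliary arguments the paper isolates as Lemmas~\ref{lem:R_r} and~\ref{lem:12ell-3r}. Your only slips are cosmetic: elements of $R_\ell$ are merely not divisible by $a$ (not necessarily coprime to $a$, though part (a) only needs the former), and the sixth mixed pattern is not covered by the integrality lemma as you first assert, but you immediately treat it separately, as the paper does.
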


\begin{lemma}\label{lem:B_r}
  For an odd positive integer \(a\geq 7\), the set \(B_r\) contains no solution to \({\cal E}(3,0; a,a,a+1)\).
  \begin{proof}
  Given \(x_1,x_2\in B_r\), assume that \(x_3={ax_1+ax_2 \over a+1}\) and \(x_3\in B_r\). Let
  \(x_1=k_1a^2\) and \(x_2=k_2a^2\) for positive integers \(k_1\) and \(k_2\). Substituting for \(x_3\), 
  we get the numerator to be \(a^3(k_1+k_2)\). Since $a^3$ shares no factor with $a+1$, if
  $a+1$ does not divide $k_1+k_2$, then $z\not\in {\mathbb Z}$, a contradiction.  
  Assume that $k_1+k_2=m(a+1)$ for some positive integer $m$. Then, $z=a^3m$ and assume $z\in B_r$. 
  For $1\leq i\leq a-1$, $a^4+ia^2$ is not an integer multiple of $a^3$.
  Since \(\min(B_r)=a^3\), we have $k_1+k_2\geq 2a$ and the smallest integer value of \(m\) is obtained 
  when \(k_1+k_2=(a+1)a\). This implies \(z\geq a^4\), but \(\max(B_2\cup B_3) < a^4\). 
  Hence, $z\not\in B_r$.
  
  Therefore,  the set $B_r$ contains no solution to \(ax_1+ax_2=(a+1)x_3\).
  \end{proof}
  \end{lemma}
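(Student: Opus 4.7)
The plan is to exploit that every element of $B_r$ is a multiple of $a^2$. Writing $x_1=k_1 a^2$ and $x_2=k_2 a^2$, the equation $ax_1+ax_2=(a+1)x_3$ becomes $a^3(k_1+k_2)=(a+1)x_3$, and since $\gcd(a^3,a+1)=1$ there is a positive integer $m$ with $k_1+k_2=m(a+1)$ and $x_3=m a^3$. A short case check on the three pieces of $B_r$ shows that the multiples of $a^3$ in $B_r$ are exactly $\set{i a^3 : 1\le i\le a-1}$: the elements $a^4+i a^2$ and $i(a+1)a^2$ are never multiples of $a^3$ (since $a\nmid i$ and $\gcd(a,a+1)=1$), while $i a^3+j a^2$ is a multiple of $a^3$ only when $j=0$. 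A contradiction therefore follows as soon as we prove $m\ge a$, i.e.\ $k_1+k_2\ge a(a+1)$.

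To prove this lower bound I would carry out a residue analysis modulo $a+1$ on the values of $k$ for which $k a^2\in B_r$. Using $a\equiv -1$ and $a^2\equiv 1\pmod{a+1}$, a direct computation shows these residues all lie in $\set{0}\cup\set{2,3,\dotsc,a}$; in particular, the residue $1$ is never realised. The divisibility $(a+1)\mid(k_1+k_2)$ then splits into two cases. Either $k_1\equiv k_2\equiv 0\pmod{a+1}$, in which case both $k_i$ come from the third subset of $B_r$ and $k_1+k_2\ge 2\cdot\tfrac{a+1}{2}\cdot(a+1)=(a+1)^2>a(a+1)$. Otherwise the residues of $k_1$ and $k_2$ sum to $a+1$ with each residue lying in $\set{2,\dotsc,a-1}$ (the residue $a$ is excluded because it would have to be paired with $1$). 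For each such residue $r$, I would show that the minimum admissible $k$ equals $a(a+1-r)$, attained in the second subset of $B_r$ by $(i,j)=(a+1-r,0)$; summing the two minima then gives $k_1+k_2\ge a(a+1-r)+a r=a(a+1)$, as required.

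The main obstacle is the minimality claim in the second case: one must verify that no smaller $k$ of residue $r$ is contributed either by the first subset of $B_r$ (whose only candidate is $k=a^2+r-1$) or by a larger $i$ within the second subset, where the parametrisation $k=(a+1)(i-1)+r$ is clearly increasing in $i$. The only nontrivial comparison reduces to the elementary inequality $a(a+1-r)<a^2+r-1$ for $r\in\set{2,\dotsc,a-1}$, which is equivalent to $r>1$ and hence holds for all $a\ge 2$. Once this minimality is in place, both cases force $m\ge a$, contradicting the earlier conclusion that $m\in\set{1,\dotsc,a-1}$, and the lemma follows.
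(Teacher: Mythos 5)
Your proposal is correct, and it follows the same overall skeleton as the paper's proof: every element of $B_r$ is $a^2$ times some admissible $k$, the equation forces $(a+1)\mid(k_1+k_2)$ and $x_3=ma^3$ with $m=(k_1+k_2)/(a+1)$, and the contradiction comes from showing $ma^3\notin B_r$. Where you differ is in how the key quantitative step is justified. The paper's written proof only observes $\min(B_r)=a^3$, hence $k_1+k_2\geq 2a$, and then asserts that the smallest value of $m$ occurs at $k_1+k_2=a(a+1)$; as stated, $k_1+k_2\geq 2a$ yields only $m\geq 2$, so the minimality claim is effectively delegated to the AutoCase verification rather than argued in the prose. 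Your residue analysis modulo $a+1$ supplies exactly the missing argument: the admissible $k$'s have residues in $\{0\}\cup\{2,\ldots,a\}$ (residue $1$ never occurs), the residue-$0$ case forces $k_1+k_2\geq(a+1)^2$, and for a pair of residues $r$ and $a+1-r$ with $2\leq r\leq a-1$ the minimum representative of residue $r$ is $a(a+1-r)$ (the comparison with the first-subset candidate $a^2+r-1$ reduces to $r>1$, as you note), giving $k_1+k_2\geq a(a+1)$, hence $m\geq a$, while the multiples of $a^3$ in $B_r$ are exactly $ia^3$ with $1\leq i\leq a-1$. So your write-up is a self-contained, fully justified version of essentially the same argument; what it buys is precisely the minimality claim the paper glosses over, at the cost of a somewhat longer case analysis.
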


  \begin{lemma}\label{lem:12ell-3Br}
    For odd positive integer \(a \geq 7\), there exists no monochromatic solution to \(ax_1+ax_2=(a+1)x_3\) if \(x_1,x_2 \in B_{\ell}\) and \(x_3\in B_r\).
  \begin{proof}
    Similar to the proof of Lemma \ref{lem:12ell-3r}.
  \end{proof}
  \end{lemma}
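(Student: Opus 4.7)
The plan is to mirror the argument used for Lemma~\ref{lem:12ell-3r}, replacing the residue constraint \(k \geq 2\floor{j/2}+1\) defining \(R_\ell\) by the complementary constraint \(1 \leq k \leq 2\floor{j/2}\) defining \(B_\ell\). First I would parametrize \(x_1, x_2 \in B_\ell\) by writing \(x_n = i_n a(a+1) + r_n\) with \(r_n = a j_n + k_n\), where \(0 \leq i_n \leq a^2-1\), \(0 \leq j_n \leq a\), and \(1 \leq k_n \leq 2\floor{j_n/2}\) (noting this already forces \(j_n \geq 2\)). Since \(x_3 \in B_r \subseteq S_{a^2}(N)\), write \(x_3 = a^2 k_3\). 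Substituting into \(ax_1+ax_2 = (a+1)x_3\) and dividing through by \(a\) yields \((i_1+i_2)a(a+1) + (r_1+r_2) = (a+1)a k_3\), so \(r_1 + r_2\) must be a positive multiple of \(a(a+1)\).

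Next I would bound each \(r_n\) by observing that \(k_n \leq 2\floor{j_n/2} \leq a-1\), so \(r_n \leq a \cdot a + (a-1) < a(a+1)\). Hence \(r_1+r_2 < 2a(a+1)\), and combining with the divisibility condition from the previous step forces \(r_1+r_2 = a(a+1)\). Expanding this as \(a(j_1+j_2) + (k_1+k_2) = a(a+1)\) and using \(2 \leq k_1+k_2 \leq 2(a-1)\), the only multiple of \(a\) in that range is \(a\) itself, so \(k_1+k_2 = a\) and therefore \(j_1+j_2 = a\).

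The key step---and the only place where the oddness of \(a\) actually enters---is the resulting parity obstruction. Since \(j_1+j_2 = a\) is odd, exactly one of \(j_1, j_2\) is even and the other odd; without loss of generality \(j_1\) is even and \(j_2\) is odd. Then the \(B_\ell\) bound on \(k_n\) gives \(k_1 \leq 2\floor{j_1/2} = j_1\) and \(k_2 \leq 2\floor{j_2/2} = j_2 - 1\), whence \(k_1+k_2 \leq a-1\), contradicting \(k_1+k_2 = a\). I expect this parity step to be the only substantive obstacle: the arithmetic reduction up through \(r_1+r_2 = a(a+1)\) proceeds identically to Lemma~\ref{lem:12ell-3r}, and it is precisely the inequality \(k \leq 2\floor{j/2}\) (rather than the analogous \(k \geq 2\floor{j/2}+1\) used for \(R_\ell\)) that makes the diagonal case unreachable when \(a\) is odd.
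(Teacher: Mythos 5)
Your proof is correct and follows essentially the same route as the paper: the paper's proof of this lemma simply defers to Lemma~\ref{lem:12ell-3r}, whose argument is exactly your decomposition \(x_n = i_n a(a+1) + r_n\), \(x_3 = a^2 k_3\), forcing \(r_1+r_2 = a(a+1)\) and then ruling that out when both \(x_1,x_2\) lie in the same set. In fact you make explicit the step the paper leaves implicit ("by construction of \(B_\ell\) and \(R_\ell\)"), namely the reduction to \(j_1+j_2=a\), \(k_1+k_2=a\) and the parity argument using \(k_n \leq 2\lfloor j_n/2\rfloor\) and the oddness of \(a\), which is a welcome addition.
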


\begin{lemma}
  For odd positive integer \(a \geq 7\), there exists no monochromatic solution to \(ax_1+ax_2=(a+1)x_3\) in colour \(2\).
  \begin{proof}
    Let \(B_r = B_1\cup B_2\cup B_3\) with partition blocks \(B_1=\bigcup_{i=1}^{a-1}\set{a^4+i a^2}\), \(B_2=\bigcup_{i=1}^{a-1}\set{i a^3+j a^2: 0 \leq j\leq i-1}\), and \(B_3=\bigcup_{i=(a+1)/2}^{a-1}\set{i a^3+i a^2}\).
    There are \(4^3=64\) ways to select an integer for the variables \(x_1,x_2,x_3\) from the \(4\) sets \(B_{\ell}, B_1, B_2, B_3\). 
    Most of these cases can be analyzed directly using Lemma \ref{integrality} as follows:
    \myitemize{11pt}{
      \item \(x_1,x_2,x_3\in B_{\ell}\): No monochromatic solution by Lemma \ref{integrality} (a) covering \(1\) case.
      \item \(x_1\in B_{\ell}, x_2,x_3\in B_r\): No monochromatic solution by Lemma \ref{integrality} (e) covering \(9\) cases.
      \item \(x_2\in B_{\ell}, x_1,x_3\in B_r\): No monochromatic solution by Lemma \ref{integrality} (d) covering \(9\) cases.
      \item \(x_3\in B_{\ell}, x_1,x_2\in B_r\): No monochromatic solution by Lemma \ref{integrality} (c) covering \(9\) cases.
      \item \(x_2,x_3\in B_{\ell}, x_1\in B_r\): No monochromatic solution by Lemma \ref{integrality} (f) covering \(3\) cases.
      \item \(x_1,x_3\in B_{\ell}, x_2\in B_r\): No monochromatic solution by Lemma \ref{integrality} (g) covering \(3\) cases.
      \item \(x_1,x_2\in B_{\ell}, x_3\in B_r\): No monochromatic solution by Lemma \ref{lem:12ell-3Br} covering \(3\) cases.
      \item \(x_1,x_2,x_3\in B_{r}\): No monochromatic solution by Lemma \ref{lem:B_r} covering \(27\) cases.
      }
    Hence, there exists no monochromatic solution to \(ax_1+ax_2=(a+1)x_3\) in colour \(2\).
  \end{proof}
\end{lemma}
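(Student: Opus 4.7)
The plan is to mirror exactly the structure of the preceding lemma that established the colour~1 case, with the sets $R_r, R_\ell, R_1, R_2, R_3$ replaced by their blue counterparts $B_r, B_\ell, B_1, B_2, B_3$. By construction every element of $B_r$ is divisible by $a^2$ (since $B_r \subseteq S_{a^2}(N)$) and every element of $B_\ell$ is not divisible by $a$ (since $B_\ell \subseteq \overline{S_a}(N)$). This is the same divisibility dichotomy that made the integrality arguments work for colour~1, so the same seven-way split by divisibility pattern applies here.

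Partition $B_r$ as $B_1 \cup B_2 \cup B_3$ using the three components in the definition of $B_r$, and consider the four blue sets $B_\ell, B_1, B_2, B_3$. Assigning each of $x_1, x_2, x_3$ to one of these four sets gives $4^3 = 64$ cases. For each case I would apply Lemma~\ref{integrality} with the relevant part determined by which of the variables land in $B_\ell$ (not divisible by $a$) versus $B_r$ (divisible by $a^2$): the all-$B_\ell$ case falls under part~(a) (1 case); the three cases with exactly one of $x_1,x_2,x_3$ in $B_\ell$ fall under parts~(e),~(d),~(c) respectively (9 cases each, so 27 total); and the three cases with exactly two variables in $B_\ell$ fall under parts~(f),~(g), plus the one that requires Lemma~\ref{lem:12ell-3Br} (3+3+3 cases). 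The remaining 27 cases where all three variables lie in $B_r$ are dispatched by Lemma~\ref{lem:B_r}.

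The one case not covered by a mechanical divisibility contradiction is $x_1, x_2 \in B_\ell$ with $x_3 \in B_r$, which is the content of Lemma~\ref{lem:12ell-3Br}. The expected hard part is verifying that Lemma~\ref{lem:12ell-3Br}'s ``proof by analogy'' to Lemma~\ref{lem:12ell-3r} truly transfers. Specifically, writing $x_1 = i_1 a(a+1) + r_1$ and $x_2 = i_2 a(a+1) + r_2$ for $r_1, r_2$ in the blue residue range $1 \le k \le 2\lfloor j/2 \rfloor$ (rather than the red range $2\lfloor j/2 \rfloor + 1 \le k \le a-1$ that appeared in the colour~1 analysis), one must show that $r_1 + r_2 = a(a+1)$ is incompatible with both residues being blue. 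For colour~1 (red) the analogous statement followed from the fact that red residues occupy the ``upper'' portion of each block of size $a$, forcing their pairwise sums to be too large; for colour~2 (blue), the residues occupy the ``lower'' portion, so the dual argument is that their pairwise sums are too small ever to reach $a(a+1)$, except possibly in edge configurations where $j = a$ that must be checked by hand.

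Once all 64 cases contradict the equation, the conclusion is immediate: no monochromatic solution to $ax_1 + ax_2 = (a+1)x_3$ exists with all three variables coloured~2. Combined with the earlier lemmas for colours 0 and 1, this establishes that $\delta$ is a valid 3-colouring of $[1, a^3(a+1)-1]$ avoiding monochromatic solutions, completing the lower bound of Theorem~\ref{th:lb-aaap1}. In practice, this entire bookkeeping of 64 cases is exactly the kind of task \AC\ is designed to automate, so the ``proof'' reduces to confirming that the symbolic partition fed to \AC\ matches the definitions of $B_\ell$ and $B_r$ above.
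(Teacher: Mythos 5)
Your proposal is correct and follows essentially the same route as the paper: the identical $4^3=64$ case split over $B_\ell, B_1, B_2, B_3$, with parts (a), (c)--(g) of Lemma~\ref{integrality} handling the mixed divisibility patterns, Lemma~\ref{lem:12ell-3Br} handling $x_1,x_2\in B_\ell$, $x_3\in B_r$, and Lemma~\ref{lem:B_r} handling the all-$B_r$ cases. Your added caution about checking that the ``similar to Lemma~\ref{lem:12ell-3r}'' argument genuinely transfers to the blue residue range is a reasonable remark about that auxiliary lemma, but it does not change the structure, which matches the paper's proof.
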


\section{SMT Instance via the \ZT\ Python API for Example \ref{example:z3}}\label{appendix:z3}

\begin{verbatim}
from z3 import *

# Declare symbolic variables
a = Int('a')
x1, x2, x3 = Ints('x_1 x_2 x_3')
k1, k2 = Ints('k_1 k_2')
i, j, k = Ints('i j k')

# Create the solver
s = Solver()

# Global assumptions
s.add(a >= 1)

# Equation constraint
s.add(a * x1 + a * x2 == (a + 1) * x3)

# Divisibility: a | x1
s.add(And(k1>0, x1 == a * k1))

# Non-divisibility: not(a^2 | x2)
s.add(ForAll([k2], And(k2>0, x2 != a**2 * k2)))

# Format expression for x3
s.add(x3 == a*(a + 1)*i + a*j + k)

# Symbolic format bounds
s.add(i >= 0, i < a**2)
s.add(j >= 0, j <= a)
s.add(k >= 2*(j // 2) + 1, k <= a - 1)

# Bounds for x1, x2, x3
s.add(x1 >= 1, x1 <= a**2)
s.add(x2 >= 1, x2 <= a**2)
s.add(x3 >= 1, x3 <= a**2)
\end{verbatim}

\end{document}